\definecolor{darkred}{RGB}{100,0,0}
\definecolor{darkgreen}{RGB}{0,100,0}
\definecolor{darkblue}{RGB}{0,0,150}
\newtheorem{thm}{Theorem}
\newtheorem{prp}{Proposition}
\newtheorem{lem}{Lemma}
\newtheorem{cor}{Corollary}
\newtheorem{remark}{Remark}
\def\beq{\begin{equation}}
\def\eeq{\end{equation}}
\def\beqn{\begin{eqnarray*}}
\def\eeqn{\end{eqnarray*}}
\def\bitem{\begin{itemize}}
\def\eitem{\end{itemize}}
\def\benum{\begin{enumerate}}
\def\eenum{\end{enumerate}}
\def\bmult{\begin{multline*}}
\def\emult{\end{multline*}}
\def\bcenter{\begin{center}}
\def\ecenter{\end{center}}
\DeclareMathOperator*{\argmax}{arg\, max}
\DeclareMathOperator*{\argmin}{arg\, min}
\def\cC{\mathcal{C}}
\def\cE{\mathcal{E}}
\def\cF{\mathcal{F}}
\def\cK{\mathcal{K}}
\def\cL{\mathcal{L}}
\def\cP{\mathcal{P}}
\def\cQ{\mathcal{Q}}
\def\cT{\mathcal{T}}
\def\cZ{\mathcal{Z}}
\def\bA{\boldsymbol{A}}
\def\bB{\boldsymbol{B}}
\def\bC{\boldsymbol{C}}
\def\bM{\boldsymbol{M}}
\def\bO{\boldsymbol{O}}
\def\bQ{\boldsymbol{Q}}
\def\bR{\boldsymbol{R}}
\def\bS{\boldsymbol{S}}
\def\bT{\boldsymbol{T}}
\def\bU{\boldsymbol{U}}
\def\bV{\boldsymbol{V}}
\def\bX{\boldsymbol{X}}
\def\bY{\boldsymbol{Y}}
\def\bZ{\boldsymbol{Z}}
\def\bb{\mathbf{b}}
\newcommand{\bepsilon}{{\boldsymbol\epsilon}}
\def\bTheta{\boldsymbol{\Theta}}
\newcommand\bSigma{{\boldsymbol\Sigma}}
\newcommand{\bPi}{{\boldsymbol\Pi}}
\def\bb\bU{\mathbb{\bU}}
\def\\bUnif{\text{\bUnif}}
\DeclareMathOperator{\sign}{sign}
\begin{document}

\providecommand{\keywords}[1]
{
  \small	
  \textbf{\textit{Keywords---}} #1
}

\title{Maximum Likelihood Estimation of Sparse Networks with Missing Observations}
\author[1]{Solenne Gaucher \thanks{solenne.gaucher@ensae.fr}}
\author[2,1]{Olga Klopp \thanks{kloppolga@math.cnrs.fr}}
\affil[1]{CREST, ENSAE}
\affil[2]{ESSEC Business School}
\maketitle

\begin{abstract}
Estimating the matrix of connections probabilities is one of the key questions when studying sparse networks. In this work, we consider networks generated under the sparse graphon model and the inhomogeneous random graph model with  missing observations. Using the Stochastic Block Model as a parametric proxy, we bound the risk of the maximum likelihood estimator of network connections probabilities, and show that it is minimax optimal. Moreover, we show that our estimator can be efficiently approximated using tractable variational methods, and thus used in practice.
\end{abstract}

\keywords{Missing observations, network models, sparse estimation, graphon model}

\section{Introduction}\label{section:intro}

In the past two decades, networks have attracted considerable attention, as many scientific fields are concerned by the advances made in the understanding of these complex systems. In social sciences \cite{WassermanSocio} as in physics \cite{RevModPhys.74.47} and biology \cite{yamanishi:hal-00433586}, networks are used to represent a great variety of systems of interactions between social agents, particles, proteins or neurons. These networks are often modeled as an observation drawn from a random graph. 

Missing observations is a common problem when studying real life networks. In social sciences, data coming from sample surveys are likely to be incomplete, especially, when dealing with large or hard-to-find populations. While biologists often use graphs to model interactions between proteins, experimental discovery of these interactions can require substantial time and investment from the scientific community \cite{BleakleyMissingProteins}. In many cases, collecting complete information on relations between actors can be difficult, expensive and time-consuming \cite{Kshirsagar2012TechniquesTC,Yan2012FindingME,handcock2010, GuimerMissing}. On the other hand, the emergence of detailed data sets coming, for example, from social networks or genome sequencing has fostered new challenges, as their large size makes using the full data computationally unattractive. This has lead scientists to consider only sub-samples of the available data \cite{Benyahia2017CommunityDI}. However, incomplete observation of the network structure may considerably affect the accuracy of inference methods \cite{KOSSINETS2006247}.

Our work focuses on the study of the inhomogeneous random graph model with \textit{missing observations}. In this setting, the problem of estimating the matrix of connections probabilities is of primary interest. Minimax optimal convergence rates for this problem have been shown to be attained by the least square estimator under full observation of the network for dense graphs in \cite{gao2015optimal} and for sparse graphs in \cite{KloppGraphon}. In \cite{2015gaoBiclustering}, the authors extended these results to the setting in which observations about the presence or absence of an edge are missing independently at random with the same probability $p$.  However their estimator requires the knowledge of $p$, and cannot be extended to non-uniform sampling schemes. Unfortunately, least square estimation is too costly to be used in practice. Many other approaches have been proposed, for example, spectral clustering \cite{SpecClusMcSherry, HagenSpecClus, rohe2011}, modularity maximization \cite{Newman8577, Bickel21068}, belief propagation \cite{Lenka}, neighborhood smoothing \cite{Zhang2017EstimatingNE}, convex relaxation of k-means clustering \cite{Giraud2018PartialRB} and of likelihood maximization \cite{amini2018}, and universal singular value thresholding \cite{chatterjee2015,KloppCut,USVTXu}. It was conjectured the existence of possible computational gap when no polynomial time algorithm can achieve minimax optimal rate of convergence. The present work shows that this is not the case.

In this work, we consider the maximum likelihood estimator. This estimator is also NP-hard but its computationally efficient approximations (under some additional conditions) have been proposed in the literature (see, e.g., \cite{refId0} for a detailed review of these methods). For example, the authors of \cite{Bickl} suggest to use pseudo-likelihood methods, as it leads to computationally tractable estimators. Alternatively, in \cite{VarEst} a tractable variational approximation of the maximum likelihood estimator is proposed. This methods has been applied successfully to study biological networks, political blogsphere networks and seeds exchange networks \cite{picard:hal-00391483, TabPra,LatoucheBlogsphere}. The authors of \cite{LikelihoodBickel} show asymptotic normality of the maximum likelihood estimate and of its variational approximation for sparse graphs generated by stochastic block models when the connections probabilities of the different communities are well separated. In \cite{TabPra}, these results are extended to the case of missing observations. These methods suffer from a lack of theoretical guarantees when the model is misspecified or non-identifiable. On the other hand, to the best of our knowledge, no non-asymptotic bound has been established for the risk of the maximum likelihood estimator. In this work, we close this gap and show that the maximum likelihood estimator is minimax optimal in a number of scenarii, and adaptative to non-uniform missing data schemes. Moreover we show that it can be efficiently approximated using tractable variational methods.

Our results find a natural application in predicting the existence of non-observed edges, a commonly encountered problem called \textit{link prediction} \cite{Lu2010LinkPI, LevinaLinkPred}. Interaction networks are often incomplete, as detecting interactions can require significant experimental effort. Instead of exhaustively testing for every connection, one might be interested in deducing the pairs of agents which are most likely to interact based on the relations already recorded and on available covariates. If these estimations are precise enough, testing for these interactions would enable scientists to establish the network topology while substantially reducing the costs \cite{Clauset2008HierarchicalSA}. In this context, estimating the probabilities of connections through likelihood maximization enables to accordingly rank unobserved pairs of nodes. Link prediction also finds applications in recommender systems for social networks. The missing observation scheme studied in this work is motivated by the above examples, and generalizes the model described in \cite{2015gaoBiclustering}.

\subsection{Inhomogeneous random graph model}\label{subsection:Models}
 We consider an undirected, unweighted graph with $n$ nodes indexed from $1$ to $n$. Its connectivity can be encoded by its \textit{adjacency matrix} $\bA$, defined as follows:  set $\bA$ a $n \times n$ symmetric matrix such that for any $i<j$, $\bA_{ij} = 1$ if there exists an edge between node $i$ and node $j$, $\bA_{ij} = 0 $ otherwise. In our model, we consider that there is no edge linking a node to itself, so $\bA_{ii}=0$ for any $i$. We assume that the variables $\left( \bA_{ij}\right)_{1 \leq i<j \leq n}$ are independent Bernoulli random variables of parameter $\bTheta^*_{ij} $, where $\bTheta^*$ is a $n \times n$ symmetric matrix with zero diagonal entries. This matrix $\bTheta^*$ corresponds to the matrix of probabilities of observing an edge between nodes $i$ and $j$. This model is known as the \textit{inhomogeneous random graph} model:
 \begin{equation} \label{graphSeq}
\forall 1 \leq i < j \leq n, \ \bA_{ij}\vert \bTheta^*_{ij} \overset{ind.}{\sim} \operatorname{Bernoulli} \left(\bTheta^*_{ij} \right).
 \end{equation}
In the present paper we consider the following problem: from a single partial observation of the graph, that is, given a sample of entries of the adjacency matrix $\bA$, we want to estimate the matrix of connections probabilities $\bTheta^*$.

The problem of estimating $\bTheta^*$ when some entries of the adjacency matrix are not observed is closely related to the 1-bit matrix completion problem. The matrix completion problem \cite{Candes2009,koltchinskii2011,  negahban2011} aims at recovering a matrix which is only partially observed. More precisely, we observe a random sample of its entries, which may be corrupted by some noise, and we wish to infer the rest of the matrix. In 1-bit matrix completion, first introduced in \cite{Davenport1Bit}, the entries $(i,j)$ of the observed matrix can only take two values $\{0,1\}$ with probabilities given respectively by $f(\bM_{ij})$ and $1-f(\bM_{ij})$. Here, the matrix $\bM$ corresponds to the real quantity of interest that one would like to infer, and the function $f$ can be seen as the cumulative distribution function of the noise. A typical assumption in this setting is that the matrix $\bM$ is low-rank. In \cite{KloppMultinomial}, the authors show that for 1-bit matrix completion the restricted penalized maximum likelihood estimator is minimax optimal up to a $\log$ factor. The methods used in our proofs are, to some extend, inspired by the methods developed for the framework of matrix completion. However, the problem we have in hand is in many aspects different from the 1-bit matrix completion problem. The structure of the connections probabilities matrix $\bTheta^*$ and the sparsity of the network allow for faster rates of convergence, and the technics of proof required to match the minimax optimal convergence rate are more involved.

Our approach for estimating the matrix of connections probabilities is based on the celebrated Regularity Lemma by Szemer\'edi \cite{LovaszBook}, which implies that any graph can be well approximated by a stochastic block model (SBM). We refer to \cite{LovaszBook} for a more detailed presentation of this result. In the SBM, each node $i$ is associated with a community $z^*(i)$, where $z^*: [n] \rightarrow [k]$ is called the index function. This index function can either be treated as a parameter to estimate (this model is sometimes called the conditional stochastic block model), or as a latent variable. In this case, the indexes follow a multinomial distribution: $\forall i$, $z^*(i) \overset{i.i.d}{\sim} \mathcal{M}(1; \alpha^*)$ where $\forall l \in [k]$, $\alpha_l$ is the probability that node $i$ belongs to the community $l$. Given this index function, the probability that there exists an edge between nodes $i$ and $j$ depends only on the communities of $i$ and $j$. For example, when considering citations networks, where two articles are linked if one is cited by the other, it amounts to saying that the probability that two articles are linked only depends on their topic. Similarly, if one considers students of a school in a social network, it is a reasonable assumption to say that the probability that two students are linked only depends on their cohorts. This implies that the matrix of connections probabilities $\bTheta^*$ can be factorized as follows: $\bTheta^*_{ij} = \bQ^*_{z^*(i) z^*(j)}$, with $\bQ^*$ a $k \times k$ symmetric matrix such that $\bQ^*_{ab}$ is the probability that there exists an edge between a given member of the community $a$ and a given member of the community $b$, so we have that the conditional SBM can be written as:
 \begin{equation} \label{blockmodel}
 \begin{split}
\exists \bQ^*\in[0,1]^{k \times k}_{\rm sym},&\ \exists z^*: [n] \rightarrow [k]\\
\ \forall 1 \leq i < j \leq n, \ \bA_{ij}\vert \bQ^*, z^* &\overset{ind.}{\sim} \operatorname{Bernoulli} \left(\bQ^*_{z^*(i)z^*(j)} \right),\ \bA_{ii} = 0.
\end{split}
 \end{equation}
While considering the SBM, the problem of estimating the matrix of connections probabilities reduces to estimating the label function $z^*$ and the matrix of probabilities of connections between communities $\bQ^*$.

In the past decade, the stochastic block model has known a growing interest from the statistical community and an important part of the work has focused on the problem of community recovery (i.e., the recovery of the vector of communities populations $\alpha^*$, or of the label function $z^*$ in the conditional model). Theoretical guarantees for this problem were established  under quite strong assumptions on the matrix of probabilities of connections between communities, $\bQ^*$, see, for example, \cite{mass2013, bordenave2018, Abbe2015CommunityDI, mossel2014consistency}.

Note that our results hold without assuming the existence of the true community structure, that is, without assuming that the matrix $\bTheta^*$ is block constant. With this in mind, we will focus on estimating the distribution giving rise to the adjacency matrix, i.e., on estimating $\bTheta^*$, rather than on estimating the label function or the populations of the communities. One important question in this setting is how to choose the number of communities for our estimator, as more communities implies a smaller bias and a greater variance. Optimizing this trade-off requires, first, establishing a non-asymptotic bound on the risk of our estimator for a number of communities that may depend on the number of nodes, and, in a second time, bounding the bias of an oracle block constant estimator.

Our work focuses on relevant in applications setting of partial observations of the network. We consider the following missing value setting. Let $\bX \in \{0,1\}^{n \times n}_{sym}$ denote the sampling matrix given by $\bX_{ij} = 1$ if we observe $\bA_{ij}$ and $\bX_{ij} = 0$ otherwise. We assume that the sampling matrix $\bX$ is random and, conditionally on $\bTheta^*$, independent from the adjacency matrix $\bA$. For any $1\leq i<j\leq n$, its entries $\bX_{ij}$ are mutually independent. Finally, we denote by $\bPi \in [0,1]^{n \times n}_{sym}$ the unknown matrix of sampling probabilities such that $\bX_{ij} \overset{ind.}{\sim} \text{Bernoulli}(\bPi_{ij})$. This sampling scheme includes for instance node-based sampling schemes such as the exo-centered design described in \cite{handcock2010}, where we observe $\bA_{ij}$ if $i$ or $j$ belongs to the set of sampled nodes. It also covers random dyad sampling schemes (described, e.g., in \cite{TabPra}). In this case, the probability of observing the entry $\bA_{ij}$ is allowed to depend on the communities of $i$ and $j$. 

\subsection{Graphon model}
While studying exchangeable random graphs, important questions such as how to compare two graphs with different numbers of nodes or how to study graphs with an increasing number of nodes call for a more general, non-parametric model. One of such models that has attracted a lot of attention recently is the \textit{graphon} model \cite{Olhede14722, gao2015optimal, KloppGraphon, USVTXu}. In this model, the connections probabilities $\bTheta^*_{ij}$ are the following random variables  
\begin{equation}
\label{dense_graphon}
\bTheta_{ij}^* = W^*(\zeta_i, \zeta_j)
\end{equation}
where $\zeta_1, ..., \zeta_n$ are unobserved (latent) independent random variables sampled uniformly in $[0,1]$. The graph is then sampled according to the inhomogeneous random graph model \eqref{graphSeq}. The function $W^*: [0,1]^2 \rightarrow [0,1]$ is measurable, symmetric and is called a graphon. Graphs encountered in practice are usually $\textit{sparse}$: the expected number of edges grows as $\rho_n n^2$ where $\rho_n$ is a decreasing sequence of sparsity inducing parameters. The dense graphon model can be modified in order to account for this sparsity:
\begin{equation}
\label{sparse_graphon}
\bTheta_{ij}^* = \rho_nW^*(\zeta_i, \zeta_j).
\end{equation}
Since the law of the graph is invariant under any change of labelling of its nodes, different graphons can give rise to the same distribution on the space of graphs of size $n$. More precisely, let $W$ be a graphon and $\tau: [0,1] \rightarrow [0,1]$ be a measure-preserving function. We write $W_{\tau}\left(x,y \right) = W(\tau(x),\tau(y))$ and say that two graphons $U$ and $V$ are \textit{weakly isomorphic} if there exists measure-preserving maps $\tau$ , $\phi $ such that $U_{\tau} = W_{\phi}$ almost everywhere. It is established in Section $10$, \cite{LovaszBook} that two graphons define the same probability measure on graphs if and only if they are weakly isomorphic. 

In the present paper we also consider the setting when the matrix of connections probabilities is generated following the sparse graphon model \eqref{sparse_graphon}. We deal with two classes of graphon functions previously studied in the literature, step-function graphons and smooth graphons, under the scenario  of partial observations of the network.

\subsection{Outline of the paper}
  
The present paper is devoted to the theoretical study of the maximum likelihood estimator in sparse network models with missing observations. First, we provide an oracle bound for the risk of the maximum likelihood estimator of the matrix of connections probabilities from a partial observation of the adjacency matrix $\bA$. Our results hold under fairly general assumptions on the missing observations scheme and we show that the maximum likelihood estimator matches the minimax optimal rates of convergence in a variety of scenarii, while being fully adaptative to the missing data scheme. Second, we provide a parameter-free version of our estimator which, in particular, does not require the knowledge of the sparsity parameter $\rho_n$. We also bound the Kullback-Leibler divergence between the true matrix of connections probabilities and its block constant approximation, and derive an optimal choice for the number of communities defining the maximum likelihood estimator. We show that the maximum likelihood estimator can be approximated using tractable variational methods, and we provide a bound on the risk of the variational estimator.

This manuscript is organized as follows. In Section \ref{subsection:MLEDescription}, we introduce the maximum likelihood estimator for the matrix of connections probabilities $\bTheta^*$ from partial observation of the adjacency matrix $\bA$. Then,   Theorem \ref{thm_matrix_oracle} in Section \ref{subsection:Results} provides a non-asymptotic oracle bound on the risk of this estimator. As a consequence, we  show that our estimator is minimax optimal in a number of scenarii and derive the corresponding bound for estimating $\bTheta^*$ in the case of full observation of the adjacency matrix $\bA$. Our estimation method requires bounds on the entries of $\bTheta^*$. In Section \ref{subsection:unknowngamma}, we first  propose a  method to choose these bounds under fairly general assumptions and, in Section \ref{subsection:graphon}, we specify it to the case of sparse graphon model \eqref{sparse_graphon}. We show that the resulting adaptative estimator is minimax optimal up to a log factor. In Section \ref{subsection:MLEGraphon}, Theorem \ref{thm:MySupGraphon}, we provide the choice for the number of communities that achieves the best trade off between the variability of our estimate and the fit of the oracle model. Finally, in Section \ref{Section:Variational}, we show that the maximum likelihood estimator can be consistently approximated using a tractable variational estimator.

 \subsection{Notations}
 
 We provide here a summary of the notations used throughout this paper.work.

\begin{itemize}
\item For any positive integer $d$, we denote by $[d]$ the set $\{1,...,d\}$.
\item For any set $\mathcal{S}$, we denote by $\vert \mathcal{S} \vert$ its cardinality.
\item For any matrix $\bA$, we denote by $\bA_{ij}$ its entry on row $i$ and column $j$. If $\bA \in [0,1]^{n \times n}$ and $\bA$ is symmetric, we write $\bA \in [0,1]^{n \times n}_{\rm sym}$.
\item Let $\cK(q,q')=q\log\left (\frac{q}{q'}\right )+(1-q)\log\left (\frac{1-q}{1-q'}\right)$ denote the Kullback-Leibler divergence of a Bernoulli distribution with parameter $q$ from a Bernoulli distribution with parameter $q'$. For any three symmetric matrices with zero diagonal entries $\bA$,  $\bB$, $\bX \in [0,1]^{n \times n}_{sym}$ we set
\begin{equation*}
\cK_{\bX}(\bA,\bB) =\underset{i<j}{\sum}\bX_{ij}\cK(\bA_{ij},\bB_{ij})\  \text{ and }\ \cK(\bA,\bB)=\underset{i<j}{\sum}\cK(\bA_{ij},\bB_{ij}).
\end{equation*}
    
\item For any three symmetric matrices with zero diagonal entries $\bA$,  $\bB$, $\bC \in \mathbb{R}^{n \times n}_{sym}$, let $\left\langle \bA \vert \bB \right\rangle = \underset{i<j}{\sum} \bA_{ij}\bB_{ij}$, $\left\langle \bA \vert \bB \right\rangle_{\bC} = \underset{i<j}{\sum} \bC_{ij}\bA_{ij}\bB_{ij}$, $\left\Vert \bA\right\Vert_2 = \sqrt{\left\langle \bA \vert \bA \right\rangle}$,  $\left\Vert \bA\right\Vert_{2,\bC} = \sqrt{\left\langle \bA \vert \bA \right\rangle_{\bC}}$, and $\left\Vert \bA\right\Vert_{\infty} = \underset{i,j}{\max} \vert \bA_{ij}\vert$. With these notations, $\left\Vert \bA\right\Vert_{2,\bPi} = \sqrt{\mathbb{E}\left[\left\Vert \bA\right\Vert_{2,\bX}^2\right]}$ corresponds to the $L_2$-norm of the matrix $\bA$ with respect to the sampling probabilities $\bPi$.

\item We denote by $\mathcal{Z}_{n,k}$  the label functions $z: [n] \rightarrow [k]$. For any $z \in \mathcal{Z}_{n,k}$, we denote by $\mathcal{T}_z$ the set of block constant matrices corresponding to the label $z$: 

$\mathcal{T}_z \triangleq \left \{\bA: \forall i \in [n], \bA_{ii} = 0 \ \& \ \exists \bQ \in [0,1]^{k \times k}, \forall 1 \leq i<j\leq n, \bA_{ij} = \bA_{ji} = \bQ_{z(i) z(j)} \right \}.$ 

\item To ease notations, for $\bA \in \mathcal{T}_z$ and $(a, b)\in [k]^2$, we sometimes denote by  $\bA_{z^{-1}(a) z^{-1}(b)}$ any entry $\bA_{ij}$ such that $(i,j) \in (z^{-1}(a), z^{-1}(b))$ and $i \neq j$. We write $\cT_k = \underset{z \in \cZ_{n,k}}{\cup} \cT_z$.

\item We denote by $C$ and $C'$ positive constants that can vary from line to line. These are absolute constants unless otherwise mentioned. For any two positive sequences $\left(a_n\right)_{n \in \mathbb{N}}$, $\left(b_n\right)_{n \in \mathbb{N}}$, we write $a_n = \omega(b_n)$ if $a_n/b_n \rightarrow \infty$.
\item We denote respectively by $\mathbb{E}^{\bX}$ and $\mathbb{P}^{\bX}$ the expectation and the probability conditionally on the random variable $\bX$, and respectively by $\mathbb{E}$ and $\mathbb{P}$ the expectation and the probability over all random variables.
\end{itemize}

\section{Convergence rate for the maximum likelihood estimator}
\label{section:Results}

\subsection{Maximum likelihood estimator under missing observations}\label{subsection:MLEDescription}

 We start by introducing the conditional log-likelihood for the model \eqref{graphSeq}. Conditionally on the probability matrix $\bTheta^*$, the entries $(\bA_{ij})_{1 \leq i<j\leq n}$ of the adjacency matrix $\bA$ are independent Bernoulli variables with parameters $(\bTheta^*_{ij})_{1 \leq i<j\leq n}$. Therefore, for any $\bTheta \in [0,1]^{n\times n}$, the conditional log-likelihood of the parameter matrix $\bTheta$ with respect to the observed entries of the adjacency matrix $\bA$ is given by
  \begin{align*}
\cL_{\bX}(\bA;\bTheta)&=\sum_{i<j}\bX_{ij}\left (\bA_{ij}\log(\bTheta_{ij})+(1-\bA_{ij})\log(1-\bTheta_{ij})\right).
   \end{align*}
For any $z\in \cZ_{n,k}$ and $\bQ\in [0,1]^{k\times k}_{\mbox{sym}}$, the matrix of connections probabilities corresponding to the block model $(z,\bQ)$ is given by $\bTheta_{ij} = \bQ_{z(i)z(j)}$ for $1\leq i < j \leq n$ and $\bTheta_{ii} =0$ for $i \in [n]$. With these notations, the conditional log-likelihood of a block model $(z,\bQ)$  with respect to the observed entries of the adjacency matrix $\bA$ is  
 \begin{align*}
\cL_{\bX}(\bA;z,\bQ)&=\sum_{1 \leq i<j\leq n}\bX_{ij}\left(\bA_{ij}\log(\bQ_{z(i)z(j)})+(1-\bA_{ij})\log(1-\bQ_{z(i)z(j)})\right)\\
 &=\sum_{1 \leq a\leq b \leq k}\sum _{\underset{i \neq j}{i\in z^{-1}(a),\ j\in z^{-1}(b)}}\bX_{ij}\left(\bA_{ij}\log(\bQ_{ab})+(1-\bA_{ij})\log(1-\bQ_{ab})\right)\\
 &=\sum_{a\leq b}\log(\bQ_{ab})\sum _{\underset{i \neq j}{i\in z^{-1}(a),\ j\in z^{-1}(b)}}\bX_{ij}\bA_{ij}+\sum_{a\leq b}\log(1-\bQ_{ab})\sum _{\underset{i \neq j}{i\in z^{-1}(a),\ j\in z^{-1}(b)}}\bX_{ij}(1-\bA_{ij}).&
   \end{align*}
The maximum likelihood estimator for the stochastic block model is 
  \begin{align*}
  (\widehat \bQ,\widehat z)\in \underset{\bQ\in [0,1]^{k\times k}_{\rm sym}, z\in \cZ_{n,k}}{\argmax} \cL_{\bX} (\bA;z,\bQ).
    \end{align*}
    The block constant maximum likelihood estimator  of $\bTheta^{*}$ is defined as $\widehat \bTheta_{ij}=\widehat \bQ_{\widehat z(i)\widehat z(j)}$ for any $i<j$. Note that maximizing the log-likelihood is equivalent to minimizing  a sum of Bernoulli Kullback-Leibler divergences. Indeed, an easy calculation leads
\begin{align}\label{def_estimator}
  (\widehat \bQ,\widehat z)\in \underset{\bQ\in [0,1]^{k\times k}_{\rm sym}, z\in \cZ_{n,k}}{\argmax} \cL_{\bX} (\bA;z,\bQ)=\underset{\bQ\in [0,1]^{k\times k}_{sym}, z\in \cZ_{n,k}}{\argmin} \sum_{i<j}\bX_{ij}\cK(\bA_{ij},\bQ_{z(i)z(j)}) .
    \end{align}
 Moreover, for any fixed assignment $z\in \cZ_{n,k}$ and any sampling matrix $\bX$, the log-likelihood with regards to the observed entries of $\bA$ will be maximized by taking 
 \begin{equation*}
 \bQ_{ab}= \overline{\bX\bA}^z_{ab}\triangleq \frac{\sum _{\underset{i \not= j}{i\in z^{-1}(a),j\in z^{-1}(b)}}\bX_{ij}\bA_{ij}}{\sum _{\underset{i \not= j}{i\in z^{-1}(a),j\in z^{-1}(b) }}\bX_{ij}}.
 \end{equation*}
Note that for any label function $z$, maximizing the likelihood or minimizing the least square criterion defined as $\cC_{\bX}\left(\bA; z, \bQ\right) = \underset{i<j}{\sum}\bX_{ij}\left(\bA_{ij} - \bQ_{z(i),z(j)} \right)^2$ with regards to $\bQ$ yields the same estimator. However, the label functions selected by the two criterions can be different, as is shown in figure \ref{fig:diff_ML_LS}.
\begin{figure}[h] 
\centering
  \begin{subfigure}{7cm}
    \centering
    \includegraphics[width=5.5cm]{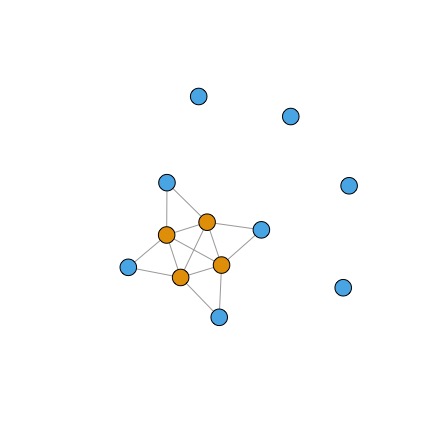}
    \caption{Communities obtained using the least squares criterion.}
  \end{subfigure}
  \hspace{1cm}
  \begin{subfigure}{7cm}
    \centering\includegraphics[width=5.5cm]{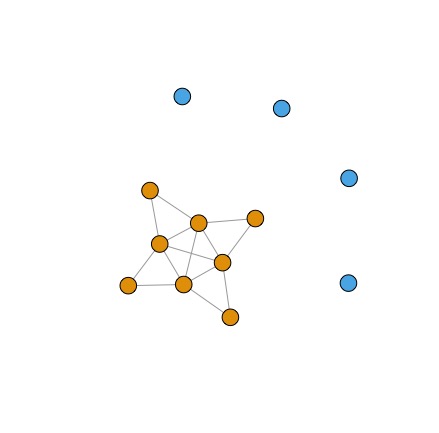}
    \caption{Communities obtained using the maximum likelihood criterion.}
  \end{subfigure}
  \caption{\label{fig:diff_ML_LS}\small We fit a SBM with two communities to the graph above using the maximum likelihood criterion (left) and the least squares criterion (right). Four nodes are classified differently by the two methods.}
\end{figure}

In the rest of this work, we will denote by $\widetilde\bTheta$ the oracle probability matrix, i.e., the best approximation to $\bTheta^*$ in the sense of the weighted Kullback Leibler divergence: 
\begin{equation}\label{eq:barThetaKL}
\begin{split}
&\widetilde\bTheta_{i<j} = \bQ^*_{z^*(i) z^*(j)}, \widetilde\bTheta_{ii}=0\ \\
 &(\bQ^*,z^*)\in \underset{\bQ\in [0,1]^{k\times k}_{\rm sym}, z\in \cZ_{n,k}}{\argmin} \sum_{i<j}\cK_{\bPi}(\bTheta^*_{ij},\bQ_{z(i)z(j)}) .
    \end{split}
\end{equation}

\subsection{Upper bound on the risk of the restricted maximum likelihood estimator}
 \label{subsection:Results}
  In this section, we establish an upper bound on the risk of the maximum likelihood estimator and show that it matches the minimax convergence rate obtained in \cite{KloppGraphon,2015gaoBiclustering}.  We will measure the risk of our estimator in $\bPi$-weighted Frobenius norm. To bound the risk of the maximum likelihood estimator, we assume that there exists sequences $\rho_n$ and $\gamma_n$ such that $\forall i<j$, \begin{equation}\label{cond_bounds}
      0 < \gamma_n \leq \bTheta^*_{ij} \leq \rho_n < 1.
  \end{equation}
  Note that for sparse graphs, $\rho_n$ corresponds to the sparsity inducing sequence in equation \eqref{sparse_graphon}.
  
  The assumption \eqref{cond_bounds} is systematic in the literature studying the maximum likelihood estimator for the stochastic block model, as it guarantees that the loss associated to the maximum likelihood estimator is Lipschitz. See, for example, \cite{LikelihoodBickel} and  \cite{BickelMS}, where the authors assume that the adjacency matrix is generated by an homogeneous stochastic block model for which the matrix $\bQ^*/\rho_n$ has entries bounded away from $0$. In our model, this corresponds to imposing that $\rho_n = O(\gamma_n)$, i.e., that all entries of the matrix of connections probabilities $\bTheta^*$ are of the same order of magnitude. Our assumptions are more general than the one developped in these articles, as they also cover the case $\gamma_n = o(\rho_n)$. 
  
  In \cite{celisse2012}, the authors consider the dense SBM and assume that the entries of $\bQ^*$ belong to $\{0\} \cup [\zeta, 1-\zeta] \cup \{1\}$ for some $\zeta >0$. They prove the consistency of the maximum likelihood estimator constrained to a restricted subset of the parameters. However, the definition of this subset implies knowing the set $\Omega_0 = \{ (i,j) : \bTheta^*_{ij} \in \{0,1\}\}$ prior to estimating the matrix of connections probabilities. Note that, if we assume that $\Omega_0$ is known and that $\bQ^*$ belong to $\{0\} \cup [\zeta, 1-\zeta] \cup \{1\}$, we can set $\widehat\bTheta_{ij} = 0$ for any $(i,j) \in \Omega_0$ and estimate the remaining entries (which are bounded away from $0$ and $1$) with our procedure.
  
  On the other hand, cases where the entries of $\bTheta^*$ are of different order of magnitude are common in the literature in the case of planted partition models and assortative and disassortative SBM. In the planted partition model, the matrix of connections probabilities between communities is given by $\bQ^* = (p-q) \bold{I}_k + q \bold{1}_k \bold{1}_k^T$, where $p>q$,  $\bold{I}_k$ is the identity matrix and $\bold{1}_k \bold{1}_k^T$ the matrix whose entries are all equal to $1$. This amounts to saying that the probability that two nodes are connected only depends on whether they belong to the same community or not. This model can be relaxed to give rise to the assortative model, where the within group probabilities of connection $\bQ^*_{aa}$ are larger than the between group probabilities of connection $\bQ^*_{bc}$ : there exists $p,q \in [0,1]$ such that for any $a\neq b$, one has $\bQ^*_{ab} \leq q < p \leq \bQ^*_{aa}$. The disassortative model corresponds to the case where between communities connections are more likely than within community connections: one has for any $a \neq b$, $\bQ^*_{aa} \leq q < p \leq \bQ^*_{ab}$. The last two models are closely related. Indeed, if $\bA$ is drawn from an assortative SBM, $\bold{1}_n \bold{1}_n^T - \bold{I}_n - \bA$ corresponds to a realization of a disassortative SBM.
  
  In the planted partition model, maximizing the likelihood is equivalent to finding a partition maximizing the within group connectivity, i.e., maximizing $\sum_{i<j}\bA_{ij}\bZ_{ij}$ where $\bZ_{ij} = \mathds{1}\{z(i) = z(j)\}$. Convex relaxations of the constraints on $\bZ$ have been studied in the literature \cite{HajekSPD, Bandeira2018, SPDSBM}, and theoretical guarantees for these algorithms for the problem of communities recovery have been established under assumptions on the gap $p-q$. In these models, communities are characterized by higher (respectively lower) connectivity, and the assumption that $q \ll p$ actually makes the recovery problem easier. By contrast, the definition of a community in the SBM as a set of nodes with the same stochastic behaviour is far more general. It covers settings not suitably described by assortative or disassortative models, as, e.g., graphs with leaders and followers such as  the well known example of Zachary's  Karate Club (see, e.g., \cite{Leger2014}). In these models, leaders  are seldomly linked one to another, but are highly connected to their own set of followers. On the other hand, followers rarely connect  one to another or to more than one leader. By comparison, our results hold without any assumption on the assortativity or the disassortativity of the model.

In a first time, we assume that we know $\gamma_n$ and $\rho_n$. We will discuss how to estimate these values in Section
   \ref{subsection:graphon}. Let  $\widehat{\bTheta}$ be the block constant estimator based on the maximization of the likelihood among block constant matrices with entries in $[\gamma_n, \rho_n]$:
\begin{equation}\label{eq:MLE}
\begin{split}
&\widehat{\bTheta}_{i<j} = \widehat{\bQ}_{\widehat{z}(i) \widehat{z}(j)},\  \widehat{\bTheta}_{ii}=0\\
&(\widehat{\bQ},\widehat{z}) \in \underset{\bQ \in [\gamma_n,\rho_n]^{k\times k}_{\rm sym}, z\in \cZ_{n,k}}{\argmin}\sum_{i<j} \bX_{ij}\cK (\bA_{ij},\bQ_{z(i)z(j)}).
\end{split}
\end{equation}
Here we assume that $k$ is fixed and that it can depend on the number of
nodes $n$. $k$ can be chosen using a network cross-validation
method \cite{CrossValK} or, when the graphon is a step function, it can be chosen using a sequential goodness-of-fit testing procedure \cite{lei2016} or a
likelihood-based model selection method \cite{BickelMS}. When graphon is H\"older-continuous, we provide a choice of $k$ to optimize the usual trade-off between bias and variance of our estimator in Section \ref{subsection:graphon}.
Note that the maximum likelihood estimator does not require the knowledge of the matrix of sampling probabilities $\bPi$. On the other hand, this matrix, which characterizes the difficulty induced by missing observations, appears when bounding the error of this estimator. In Theorem \ref{thm_matrix_oracle}, we provide a general bound on the $\Vert \cdot \Vert_{2,\bPi}$-norm of the error, while in Corollaries \ref{corPi1} and \ref{cor2} we derive specific bounds on its $\Vert \cdot \Vert_{2}$-norm under further assumptions on the matrix $\bPi$.
\begin{thm}\label{thm_matrix_oracle}
Assume that $\bA$ is drawn according to \eqref{graphSeq}, and that $\rho_n = \omega(n^{-1})$. Then, there exists absolute constants $C,C'>0$ such that with probability at least $1 - 9\exp\left(-C\rho_nn\log(k)\right)$
\begin{align}\label{eq:th1}
\Vert \bTheta^{*}-\widehat \bTheta\Vert^{2}_{2,\bPi}\leq C' \rho_n \left (\mathcal{K}_{\bPi}(\bTheta^{*}, \widetilde \bTheta) + \frac{\rho_n^2}{\left(1 - \rho_n \right)^2 \land \gamma_n^2}\left(k^2 + n\log(k) \right)\right ).
\end{align}
\end{thm}
\begin{remark}
 Note that we are not interested in regimes for which $\rho_n = O(n^{-1})$, as Theorem \ref{KloppBorneInf} implies that in this setting the constant estimator with all entries equal to the average node degree attains the minimax rate.
\end{remark}
\begin{remark}
 This bound is stated as a function of the of weighted Kullback-Leibler divergence $\mathcal{K}_{\bPi}$ and of the oracle matrix $\widetilde \bTheta$ defined in \eqref{eq:barThetaKL}. Note that it implies the weaker bound $$\Vert \bTheta^{*}-\widehat \bTheta\Vert^{2}_{2,\bPi}\leq C' \rho_n \left (\mathcal{K}(\bTheta^{*}, \widetilde \bTheta^f) + \frac{\rho_n^2}{\left(1 - \rho_n \right)^2 \land \gamma_n^2}\left(k^2 + n\log(k) \right)\right ) $$ where $\widetilde \bTheta^f$ is the oracle matrix for the full Kullback-Leibler divergence $\mathcal{K}$. Indeed, one has $\mathcal{K}_{\bPi}(\bTheta^{*}, \widetilde \bTheta) \leq \mathcal{K}_{\bPi}(\bTheta^{*}, \widetilde \bTheta^f) \leq \mathcal{K}(\bTheta^{*}, \widetilde \bTheta^f)$.
\end{remark}
In the case where all entries are observed, that is $\bPi_{ij} = 1$ for any $i<j$, the rate attained by the maximum likelihood estimator is given by the following corollary.
\begin{cor}\label{corPi1}
Assume that $\bA$ is drawn according to \eqref{graphSeq}, that $\forall 1 \leq i < j \leq n, \ \bPi_{ij} =1$ and that $\rho_n = \omega(n^{-1})$. Then, there exists positive constants $C,C'>0$ such that with probability at least $1 - 9\exp\left(-C\rho_n n\log(k)\right)$
\begin{align*}
\Vert \bTheta^{*}-\widehat \bTheta\Vert^{2}_{2}\leq C' \rho_n \left (\mathcal{K}(\bTheta^{*}, \widetilde \bTheta) + \frac{\rho_n^2}{\left(1 - \rho_n \right)^2 \land \gamma_n^2}\left(k^2 + n\log(k) \right)\right ).
\end{align*}
\end{cor}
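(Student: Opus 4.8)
The final statement is the Corollary specializing Theorem~\ref{thm_matrix_oracle} to the case $\bPi_{ij}=1$ for all $i<j$. The plan is to derive it directly as a corollary — this is essentially a matter of substituting $\bPi_{ij}=1$ into the conclusion of the theorem and observing that each $\bPi$-weighted quantity collapses to its unweighted counterpart.

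Concretely, I would proceed as follows. First, observe that when $\bX$ is sampled with $\bPi_{ij}=1$, we have $\bX_{ij}=1$ almost surely for every $i<j$, so the sampled adjacency matrix coincides with the full adjacency matrix and the restricted maximum likelihood estimator $\widehat\bTheta$ of Section~\ref{subsection:Results} reduces to the fully observed restricted MLE. Second, note that for any symmetric zero-diagonal matrices $\bA,\bB$ one has $\|\bA-\bB\|_{2,\bPi}^2 = \sum_{i<j}\bPi_{ij}(\bA_{ij}-\bB_{ij})^2 = \sum_{i<j}(\bA_{ij}-\bB_{ij})^2 = \|\bA-\bB\|_2^2$ when $\bPi\equiv 1$, and likewise $\cK_{\bPi}(\bA,\bB) = \sum_{i<j}\bPi_{ij}\cK(\bA_{ij},\bB_{ij}) = \cK(\bA,\bB)$. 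In particular the oracle matrix $\widetilde\bTheta$ defined in \eqref{eq:barThetaKL} through minimization of $\sum_{i<j}\cK_{\bPi}(\bTheta^*_{ij},\bQ_{z(i)z(j)})$ becomes the minimizer of the unweighted divergence $\sum_{i<j}\cK(\bTheta^*_{ij},\bQ_{z(i)z(j)})$, i.e.\ $\widetilde\bTheta = \widetilde\bTheta^f$ in the notation of the second remark. Third, I would invoke Theorem~\ref{thm_matrix_oracle} verbatim: under the stated hypotheses $\bA\sim\eqref{graphSeq}$ and $\rho_n=\omega(n^{-1})$, with probability at least $1-9\exp(-C\rho_n n\log k)$,
\begin{align*}
\|\bTheta^*-\widehat\bTheta\|_{2,\bPi}^2 \le C'\rho_n\left(\cK_{\bPi}(\bTheta^*,\widetilde\bTheta) + \frac{\rho_n^2}{(1-\rho_n)^2\wedge\gamma_n^2}\left(k^2+n\log k\right)\right),
\end{align*}
and then substitute the two identities $\|\cdot\|_{2,\bPi}=\|\cdot\|_2$ and $\cK_{\bPi}=\cK$ to land exactly on the claimed bound, with the same absolute constants $C,C'$ and the same probability.

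There is essentially no obstacle here — the only thing to be careful about is that the probability statement in Theorem~\ref{thm_matrix_oracle} is over all random variables (including $\bX$), and one should note that the event $\{\bX_{ij}=1\ \forall i<j\}$ has probability one under $\bPi\equiv 1$, so conditioning on $\bX$ is vacuous and the $1-9\exp(-C\rho_n n\log k)$ probability, now over $\bA$ alone, is unaffected. One minor subtlety worth a sentence: in the degenerate regime where $\rho_n$ could equal $1$, the factor $(1-\rho_n)^{-2}$ blows up, but the hypothesis $\rho_n<1$ from \eqref{cond_bounds} together with $\rho_n=\omega(n^{-1})$ keeps everything finite, exactly as in the parent theorem. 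Hence the corollary follows immediately.
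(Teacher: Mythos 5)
Your proposal is correct and matches the paper's (implicit) argument: the corollary is obtained exactly by specializing Theorem~\ref{thm_matrix_oracle} to $\bPi\equiv 1$, under which $\Vert\cdot\Vert_{2,\bPi}=\Vert\cdot\Vert_2$, $\cK_{\bPi}=\cK$, and the oracle $\widetilde\bTheta$ becomes the unweighted-KL oracle. Your additional remarks about $\bX\equiv 1$ almost surely and the probability statement are fine but not needed beyond the direct substitution.
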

If we assume that the probability of observing any entry of the adjacency matrix is bounded away from $0$, Theorem \ref{thm_matrix_oracle} can be adapted to provide a bound on the risk of our estimator under the Frobenius norm. Indeed, if $\underset{1\leq i<j\leq n}{\min}\{\bPi_{ij}\} \geq p$, then $\Vert \bTheta^{*}-\widehat \bTheta\Vert^{2}_{2}\leq \frac{1}{p}\Vert \bTheta^{*}-\widehat \bTheta\Vert^{2}_{2,\bPi}$ and we get the following result.
\begin{cor}\label{cor2}
Assume that $\bA$ is drawn according to \eqref{graphSeq}, that $\underset{1\leq i<j\leq n}{\min}\{\bPi_{ij}\} \geq p$ and that $\rho_n = \omega(n^{-1})$. Then, there exists absolute constants $C,C'>0$ such that with probability at least $1 - 9\exp\left(-C\rho_n \left(k^2 + n\log(k) \right)\right)$
\begin{align*}
\Vert \bTheta^{*}-\widehat \bTheta\Vert^{2}_{2}\leq C' \frac{\rho_n}{p}\left (\mathcal{K}_{\bPi}(\bTheta^{*}, \widetilde \bTheta) + \frac{\rho_n^2}{(\left(1 - \rho_n \right)^2 \land \gamma_n^2)}\left(k^2 + n\log(k) \right)\right ).
\end{align*}
\end{cor}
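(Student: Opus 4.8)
The plan is to obtain Corollary~\ref{cor2} as a direct consequence of Theorem~\ref{thm_matrix_oracle}: that theorem already controls the \emph{weighted} Frobenius discrepancy $\Vert\bTheta^*-\widehat\bTheta\Vert_{2,\bPi}^2$, and the only extra ingredient needed is a deterministic comparison between the weighted norm $\Vert\cdot\Vert_{2,\bPi}$ and the plain Frobenius norm $\Vert\cdot\Vert_2$, which is available because $\bPi$ is uniformly bounded below by $p$.

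First I would record the elementary pointwise bound. For any symmetric matrix $\bB$ with zero diagonal,
\begin{equation*}
\Vert\bB\Vert_{2,\bPi}^2=\sum_{i<j}\bPi_{ij}\bB_{ij}^2\ \geq\ p\sum_{i<j}\bB_{ij}^2=p\,\Vert\bB\Vert_2^2 ,
\end{equation*}
hence $\Vert\bB\Vert_2^2\leq p^{-1}\Vert\bB\Vert_{2,\bPi}^2$. Both norms being defined as sums over pairs $i<j$, this comparison is made term by term; and since $\bPi$ is a fixed (non-random) matrix the inequality holds surely, without interacting with the randomness of $\bA$ or of the sampling matrix $\bX$. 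I would then apply it with $\bB=\bTheta^*-\widehat\bTheta$, which has zero diagonal because $\bTheta^*$ and $\widehat\bTheta$ both do, to get $\Vert\bTheta^*-\widehat\bTheta\Vert_2^2\leq p^{-1}\Vert\bTheta^*-\widehat\bTheta\Vert_{2,\bPi}^2$.

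It then suffices to invoke Theorem~\ref{thm_matrix_oracle}. The estimator $\widehat\bTheta$ in Corollary~\ref{cor2} is precisely the restricted maximum likelihood estimator analysed there, and all of its hypotheses are assumed here ($\bA$ drawn from \eqref{graphSeq}, the bounds \eqref{cond_bounds}, and $\rho_n=\omega(n^{-1})$). On the event furnished by that theorem — of probability at least $1-9\exp(-C\rho_n n\log(k))$, and a fortiori at least $1-9\exp(-C\rho_n(k^2+n\log k))$ since $k^2+n\log k\geq n\log k$ — combining the previous display with the theorem's conclusion yields
\begin{equation*}
\Vert\bTheta^*-\widehat\bTheta\Vert_2^2\ \leq\ C'\,\frac{\rho_n}{p}\left(\mathcal{K}_{\bPi}(\bTheta^*,\widetilde\bTheta)+\frac{\rho_n^2}{(1-\rho_n)^2\land\gamma_n^2}\bigl(k^2+n\log(k)\bigr)\right)
\end{equation*}
with the same absolute constants $C,C'$. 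I do not expect a substantive obstacle here: all the real work is contained in Theorem~\ref{thm_matrix_oracle}, and the only point deserving a moment's care is checking that $\Vert\cdot\Vert_2$ and $\Vert\cdot\Vert_{2,\bPi}$ are both pair-sums over $i<j$, so that the lower bound $\bPi_{ij}\geq p$ can be pulled out term by term.
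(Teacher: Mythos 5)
Your proposal is correct and is exactly the paper's argument: the paper derives Corollary~\ref{cor2} in one line from the deterministic comparison $\Vert \bTheta^{*}-\widehat \bTheta\Vert^{2}_{2}\leq \tfrac{1}{p}\Vert \bTheta^{*}-\widehat \bTheta\Vert^{2}_{2,\bPi}$ (valid since $\bPi_{ij}\geq p$ termwise) together with Theorem~\ref{thm_matrix_oracle}. One small slip: your ``a fortiori'' on the probability goes the wrong way, since $1-9\exp\left(-C\rho_n(k^2+n\log k)\right)\geq 1-9\exp\left(-C\rho_n n\log k\right)$, so the corollary's probability statement is the \emph{stronger} one and does not follow from Theorem~\ref{thm_matrix_oracle} as stated; this mismatch is present in the paper itself, and is harmless because the proof of Theorem~\ref{thm_matrix_oracle} (via $\epsilon^0\propto\rho_n\frac{\rho_n^2}{\gamma_n^2}(n\log k+k^2)$ in the peeling lemmas) actually delivers the exponent $\rho_n(k^2+n\log k)$, but as written your one-line justification of the probability should either cite that sharper form or simply keep the $n\log k$ exponent.
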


 Previously, the problem of estimation of connections probabilities matrix $\bTheta^*$ from partial observations of the network was studied, in particular, by Gao et al.  \cite{2015gaoBiclustering}. In this paper, the authors assume  that any entry of the adjacency matrix $\bA$ is observed independently from the others with the same probability $p$,  which is assumed to be known. They establish the following lower bound on the risk of any estimator for the stochastic block model.
  \begin{thm}[Gao et al., 2017]\label{KloppBorneInf}
Assume that $\bA$ is drawn according to \eqref{blockmodel}, and that each edge is observed independently from the others with probability $p$. There exists universal constants $C,C'>0$ such that 
$$\underset{\widehat \bTheta}{\inf} \ \underset{\bTheta^* \in \cT_k,\ \left \Vert\bTheta^* \right \Vert_{\infty} \leq \rho_n}{\sup} \mathbb{P} \left[\left \Vert \bTheta^* - \widehat \bTheta \right \Vert_2^2  \geq C \left(\frac{\rho_n(n\log(k) + k^2)}{p}\land \rho_n^2n^2 \right)\right] > C'.$$
\end{thm}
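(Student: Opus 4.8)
The plan is to establish this lower bound by the classical Fano method, run on two separate packing constructions: since the maximum of two lower bounds is again a lower bound, it suffices that one construction delivers the term $\frac{\rho_n k^2}{p}\wedge\rho_n^2 n^2$ and the other the term $\frac{\rho_n n\log k}{p}\wedge\rho_n^2 n^2$. Two preliminary remarks will be used throughout. First, because $\bX$ has i.i.d.\ $\mathrm{Bernoulli}(p)$ entries and is independent of $\bTheta^*$, the Kullback--Leibler divergence between the joint laws of $(\bA,\bX)$ generated by two parameters $\bTheta,\bTheta'\in\cT_k$ equals exactly $p\,\cK(\bTheta,\bTheta')$. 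Second, for Bernoulli parameters $q,q'$ both of order $\rho_n$ one has $\cK(q,q')\leq C(q-q')^2/\rho_n$, so if all parameter matrices in a construction have entries in $[\rho_n/4,\rho_n/2]$ with pairwise differences at most $\gamma$, every pairwise KL is at most $Cpn^2\gamma^2/\rho_n$. In both constructions $\gamma\in(0,\rho_n/4]$ is a free scale to be tuned so that this cumulative KL is at most $\tfrac18$ of the log-cardinality of the packing; the ceiling $\gamma\leq\rho_n/4$, forced by $\Vert\bTheta^*\Vert_\infty\leq\rho_n$, is precisely what turns each resulting bound into a minimum with $\rho_n^2 n^2$.

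For the $k^2$ term I would fix a balanced labelling $z_0:[n]\to[k]$ and, indexing by $\omega\in\{0,1\}^{k(k+1)/2}$, set $\bQ^{(\omega)}_{ab}=\rho_n/4+\gamma\omega_{ab}$ and $\bTheta^{(\omega)}_{ij}=\bQ^{(\omega)}_{z_0(i)z_0(j)}$. The Varshamov--Gilbert bound supplies a subset $\Omega$ with $\log|\Omega|\geq c k^2$ whose elements pairwise differ in at least $c'k^2$ coordinates; since each block of $z_0$ contains of order $(n/k)^2$ off-diagonal entries, this yields $\Vert\bTheta^{(\omega)}-\bTheta^{(\omega')}\Vert_2^2\geq c''\gamma^2 n^2$ for $\omega\neq\omega'$ in $\Omega$, while every pairwise KL is at most $Cpn^2\gamma^2/\rho_n$. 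Taking $\gamma^2$ of order $\frac{\rho_n k^2}{pn^2}\wedge\rho_n^2$ makes the average KL at most $\tfrac18\log|\Omega|$, and Fano's inequality then gives a lower bound of order $\gamma^2 n^2\asymp\frac{\rho_n k^2}{p}\wedge\rho_n^2 n^2$.

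The $n\log k$ term is the delicate one. I would split $[n]$ into a ``reference'' set $R$ of $\lfloor n/2\rfloor$ nodes, assigned round-robin so every class gets at least $n/(4k)$ of them, and a ``free'' set $F$; pick vectors $w_1,\dots,w_k\in\{0,1\}^k$ pairwise at Hamming distance at least $k/8$ (Varshamov--Gilbert again), and set $\bQ_{ab}=\rho_n/4+\gamma(w_a)_b$, $\bTheta^{(z)}_{ij}=\bQ_{z(i)z(j)}$ where $z$ ranges over labellings of $F$ (nodes of $R$ keeping their fixed labels). Flipping the label of a single free node $i$ then changes at least $(k/8)\cdot(n/(4k))$ of its entries to $R$ — a number of order $n$, \emph{not} $n/k$ — each by $\gamma$. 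A Gilbert--Varshamov argument over the alphabet $[k]$ produces a family $\cN$ of labellings of $F$ with $\log|\cN|\geq cn\log k$ that pairwise disagree on at least $c'n$ free nodes; counting only free-to-reference entries, which are never double counted, gives $\Vert\bTheta^{(z)}-\bTheta^{(z')}\Vert_2^2\geq c''\gamma^2 n^2$ for $z\neq z'$ in $\cN$, while the cumulative KL is again at most $Cpn^2\gamma^2/\rho_n$. Choosing $\gamma^2$ of order $\frac{\rho_n\log k}{pn}\wedge\rho_n^2$ and applying Fano yields $\frac{\rho_n n\log k}{p}\wedge\rho_n^2 n^2$. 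Combining the two constructions and passing from a packing to the stated ``$\mathbb P[\cdot]>C'$'' form gives the claimed rate $\frac{\rho_n(n\log k+k^2)}{p}\wedge\rho_n^2 n^2$ up to absolute constants.

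The main obstacle is the third paragraph: one must design the inter-community matrix $\bQ$ so that a single label flip carries $\Theta(\gamma^2 n)$ — rather than $\Theta(\gamma^2 n/k)$ — units of squared Frobenius signal, which is exactly what the ``spread-out'' rows $w_a$ buy, while simultaneously keeping the packing as large as $e^{\Theta(n\log k)}$ and the cumulative KL below $\Theta(n\log k)$; balancing these three requirements, together with the cap $\gamma\leq\rho_n/4$, against the Fano threshold is where the care is needed. Everything else — the two Varshamov--Gilbert estimates, the linearity of $\cK_{\bPi}$ in $\bPi$, the elementary bound on $\cK(q,q')$, and the handling of a few small-$k$ edge cases — is routine.
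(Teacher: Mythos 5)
This theorem is not proved in the paper you are working from: it is quoted (as a known result) from Gao, Lu, Ma and Zhou \cite{2015gaoBiclustering}, so there is no internal proof to compare against; your proposal is in effect a reconstruction of the argument of the cited source, and it does follow the same route that source takes --- a reduction to testing via Fano over two separate Varshamov--Gilbert packings, one perturbing the $k\times k$ block matrix on a fixed balanced labelling (the $\rho_n k^2/p$ term), one perturbing the labelling of ``free'' nodes against a fixed, row-separated block matrix (the $\rho_n n\log(k)/p$ term), with the KL between joint laws of $(\bA,\bX)$ computed as $p$ times the unconditional KL and with the cap $\gamma\lesssim\rho_n$ producing the minimum with $\rho_n^2 n^2$. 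The scaling of $\gamma$, the separation counts, and the KL bound $\cK(q,q')\leq C(q-q')^2/\rho_n$ for entries in $[\rho_n/4,\rho_n/2]$ are all correct, so the plan is essentially sound.

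One concrete point needs repair in your clustering construction: as written, $\bQ_{ab}=\rho_n/4+\gamma(w_a)_b$ is not a symmetric matrix, so the resulting $\bTheta^{(z)}$ is not a bona fide $k$-block SBM parameter in the usual sense (the model \eqref{block model} requires $\bQ^*\in[0,1]^{k\times k}_{\rm sym}$; relying on the asymmetric reading of the definition of $\cT_z$ would put your hypotheses outside the class the theorem is really about). This is fixable without losing anything: either split the $k$ classes into two disjoint halves, assign the reference nodes to the first half and let the free labels range over the second half, so that the perturbed blocks sit in an off-diagonal rectangle of $\bQ$ whose transpose is free and can be filled in symmetrically (this still yields $\log|\cN|\gtrsim n\log(k/2)\asymp n\log k$ and a per-flip signal of order $\gamma^2 n$), or take $\bQ=\rho_n/4+\gamma\bW$ with $\bW$ a symmetric $0/1$ matrix whose rows are pairwise at Hamming distance $\gtrsim k$, which exists by a routine probabilistic argument (a random symmetric sign pattern works for $k$ larger than an absolute constant, and small $k$ is handled by the two-point/constant constructions you already mention). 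With that adjustment, and the standard passage from a packing bound to the stated probability form, your argument delivers the claimed lower bound.
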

\noindent The authors of \cite{2015gaoBiclustering} study the following weighted least square estimator :
\begin{equation}
    \widehat{\bTheta} \in \argmin \left \Vert\bTheta \right \Vert_2^2\nonumber - \frac{2}{p}\underset{i<j}{\sum}\bX_{ij}\bTheta_{ij}\bA_{ij}
\end{equation}
where $p$ is the probability of observing any entry of the adjacency matrix. They prove that this estimator is minimax optimal in the uniform sampling setting. However, this estimator cannot handle more realistic, non-uniform sampling schemes. Our estimator, on the other hand, works in the non-uniform sampling setting, is adaptative to the sampling design, and does not require information on the probability of observing the entries of the adjacency matrix $\bA$.  In the particular case when $\bX_{ij} \sim \text{Bernoulli}(p)$ and $\rho_n = O(\gamma_n)$, Corollary \ref{cor2} and the lower bound in Theorem \ref{KloppBorneInf} ensures that the maximum likelihood estimator is minimax optimal. We underline that although the lower bound  has been established in \cite{2015gaoBiclustering} for $\bTheta \in \cT_k$, $\left \Vert \bTheta \right \Vert_{\infty} \leq \rho_n$, its proof can be adapted to provide a lower bound on the convergence rate for a smaller set of parameters $\left\{\bTheta \in \cT_k, \ \left \Vert \bTheta \right \Vert_{\infty} \leq \rho_n, \ \underset{i<j}{\min}\{\bTheta_{ij}\} \geq \gamma_n \right\}$. Indeed, the ``non parametric" as well as the ``clustering" components of the rate are established using matrices with entries close to $\frac{\rho_n}{2}$.

\subsection{Choice of $\gamma_n$ under general assumptions}
\label{subsection:unknowngamma}
In this section, we deal with the setting when condition $\underset{i<j}{\min} \ \bTheta^*_{ij} > \gamma_n $ is violated. In what follows we consider the sparse case, that is $\rho_n \rightarrow 0$, so $\gamma_n \leq 1 - \rho_n$ for $n$ large enough. As discussed in \citep{KloppGraphon}, we can easily estimate $\rho_n$ (see also Section \ref{subsection:graphon}). On the other hand, when some entries of the matrix of connections probabilities $\bTheta^*$ can be $0$ or arbitrarily close to $0$, choosing the best sequence $\gamma_n$ comes down to a trade-off between errors caused by estimating entries smaller than $\gamma_n$ by $\gamma_n$, and the bound obtained in Theorem \ref{thm_matrix_oracle}. We first consider the case when there exists a sequence $\gamma_n$ such that number of small entries $n_s = \sum_{i<j} \mathds{1}\{\widetilde \bTheta_{ij} < \gamma_n\}$ is small enough. Then, we have the following result:
\begin{cor}
Assume that $\bA$ is drawn according to \eqref{graphSeq}, that $\rho_n = \omega(n^{-1})$ and that $n_s \leq \frac{k^2 \lor (n\log(k))}{\rho_n}$. Then, there exists absolute constants $C,C'>0$ such that with probability at least $1 - 9\exp\left(-C\rho_n \left( k^2 + n\log(k) \right)\right)$
 \begin{align*}
\Vert \bTheta^{*}-\widehat \bTheta\Vert^{2}_{2,\bPi}\leq C' \rho_n \left (\mathcal{K}_{\bPi}(\bTheta^{*}, \widetilde \bTheta) + \frac{\rho_n^2}{\gamma_n^2} \left(k^2 + n\log(k) \right)\right ).
\end{align*}
\end{cor}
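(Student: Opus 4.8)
The plan is to obtain the bound from Theorem~\ref{thm_matrix_oracle} by running its proof with the unrestricted oracle $\widetilde\bTheta$ of~\eqref{eq:barThetaKL} replaced by a feasible competitor obtained from $\widetilde\bTheta$ by thresholding its block values from below at $\gamma_n$, and then controlling the loss of fit incurred by that thresholding through $n_s$. The key observation is that the lower bound $\bTheta^*_{ij}\ge\gamma_n$ in~\eqref{cond_bounds} enters the proof of Theorem~\ref{thm_matrix_oracle} only to guarantee that $\widetilde\bTheta$ --- whose block values are $\bPi$-weighted averages of entries of $\bTheta^*$, hence lie in $[0,\rho_n]$ --- belongs to the set $\cF_\gamma$ of block-constant matrices with entries in $[\gamma_n,\rho_n]$ over which $\widehat\bTheta$ is optimised. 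Every other ingredient goes through unchanged: the basic inequality $\cL_{\bX}(\bA;\widehat\bTheta)\ge\cL_{\bX}(\bA;\bTheta)$ holds for \emph{every} $\bTheta\in\cF_\gamma$; the concentration estimates for the centred log-likelihood process depend on $\bTheta^*$ only through the Bernoulli variances $\bTheta^*_{ij}(1-\bTheta^*_{ij})\le\rho_n$ and on the competitors only through their entries lying in $[\gamma_n,\rho_n]$; and the passage from Kullback--Leibler to Frobenius risk rests on the inequality $\cK(p,q)\ge c\,\rho_n^{-1}(p-q)^2$, valid for all $p\in[0,\rho_n]$ and $q\in[\gamma_n,\rho_n]$ (at $p=0$ it reads $-\log(1-q)\ge q\ge q^2/\rho_n$). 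Hence the proof of Theorem~\ref{thm_matrix_oracle}, being an oracle inequality and so comparing $\widehat\bTheta$ with an arbitrary feasible competitor, in fact yields, with probability at least $1-9\exp(-C\rho_n(k^2+n\log(k)))$ and for \emph{every} $\bTheta\in\cF_\gamma$,
\[
\Vert\bTheta^*-\widehat\bTheta\Vert^2_{2,\bPi}\ \le\ C'\rho_n\left(\cK_{\bPi}(\bTheta^*,\bTheta)+\frac{\rho_n^2}{\gamma_n^2}\left(k^2+n\log(k)\right)\right),
\]
using that in the sparse regime $\gamma_n\le\rho_n$ and $\gamma_n\le 1-\rho_n$ for $n$ large, so that $(1-\rho_n)^2\land\gamma_n^2=\gamma_n^2$.

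I would then apply this with $\bTheta=\bar\bTheta$, the block-constant matrix sharing the labels of $\widetilde\bTheta$ and with block values $\bQ^*_{ab}\lor\gamma_n$, where $(\bQ^*,z^*)$ is a minimising pair in~\eqref{eq:barThetaKL}; since each $\bQ^*_{ab}\le\rho_n$, indeed $\bar\bTheta\in\cF_\gamma$, and the thresholding only raises values below $\gamma_n$ up to $\gamma_n$, so that $\cK_{\bPi}(\bTheta^*,\bar\bTheta)-\cK_{\bPi}(\bTheta^*,\widetilde\bTheta)$ is supported on the $n_s$ entries with $\widetilde\bTheta_{ij}<\gamma_n$. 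On each such entry, writing $p=\bTheta^*_{ij}\in[0,\rho_n]$ and $q=\widetilde\bTheta_{ij}<\gamma_n$,
\[
\cK(p,\gamma_n)-\cK(p,q)=p\log\frac{q}{\gamma_n}+(1-p)\log\frac{1-q}{1-\gamma_n}\ \le\ \log\frac{1-q}{1-\gamma_n}\ \le\ \frac{\gamma_n-q}{1-\gamma_n}\ \le\ \frac{\gamma_n}{1-\rho_n},
\]
the first inequality because $p\log(q/\gamma_n)\le 0$ and $\log\frac{1-q}{1-\gamma_n}\ge 0$ (if some $\widetilde\bTheta_{ij}$ on this set equals $0$ then $\bTheta^*_{ij}=0$ there as well, otherwise $\cK_{\bPi}(\bTheta^*,\widetilde\bTheta)=+\infty$ and the statement is vacuous). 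Summing over these $n_s$ entries and using $\bPi_{ij}\le1$ gives
\[
\cK_{\bPi}(\bTheta^*,\bar\bTheta)\ \le\ \cK_{\bPi}(\bTheta^*,\widetilde\bTheta)+\frac{n_s\,\gamma_n}{1-\rho_n}.
\]

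It then remains to invoke the hypothesis $n_s\le(k^2\lor(n\log(k)))/\rho_n$: for $n$ large enough that $\rho_n\le\frac{1}{2}$, and using that $\gamma_n\le\rho_n$ forces $\gamma_n/\rho_n\le\rho_n^2/\gamma_n^2$,
\[
\frac{n_s\,\gamma_n}{1-\rho_n}\ \le\ \frac{2\gamma_n}{\rho_n}\left(k^2\lor(n\log(k))\right)\ \le\ \frac{2\rho_n^2}{\gamma_n^2}\left(k^2+n\log(k)\right).
\]
Plugging the last two displays into the oracle bound of the first step (taken at $\bTheta=\bar\bTheta$) and absorbing numerical constants into $C'$ gives exactly the asserted inequality. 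The only genuinely delicate point is the reduction carried out in the first step: one must certify that the proof of Theorem~\ref{thm_matrix_oracle} nowhere exploits $\bTheta^*_{ij}\ge\gamma_n$ beyond the feasibility of the oracle --- the two items to verify being that the Kullback--Leibler/Frobenius comparison stays valid down to $\bTheta^*_{ij}=0$ and that the empirical-process estimates involve $\bTheta^*$ only through its (uniformly $\le\rho_n$) Bernoulli variances; both hold, so the argument closes.
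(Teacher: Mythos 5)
Your proposal is correct and follows essentially the same route as the paper: the paper also replaces the oracle by the thresholded matrix $\widetilde\bTheta^s$ of \eqref{eq:barThetaS} (your $\bar\bTheta$), uses its feasibility to rerun the proof of Theorem \ref{thm_matrix_oracle}, bounds the excess divergence $\mathcal{K}_{\bPi}(\bTheta^*,\widetilde\bTheta^s)-\mathcal{K}_{\bPi}(\bTheta^*,\widetilde\bTheta)$ by an amount of order $\gamma_n n_s$ (its Lemma \ref{KLSeuil}, which you rederive), and absorbs it via the hypothesis $n_s \leq (k^2 \lor n\log(k))/\rho_n$ together with $\gamma_n/\rho_n \leq \rho_n^2/\gamma_n^2$. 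Your explicit verification that the lower bound on $\bTheta^*$ enters the proof of Theorem \ref{thm_matrix_oracle} only through feasibility of the oracle is exactly the content of the paper's "adapting the proof" step.
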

To see it, we define
\begin{equation}\label{eq:barThetaS}
\begin{split}
&\widetilde\bTheta_{ij}^s = \bQ^s_{z^*(i) z^*(j)}, \widetilde\bTheta^s_{ii}=0\ \\
 &\bQ^s_{ab} = \bQ^*_{ab} \lor \gamma_n
    \end{split}
\end{equation}
 where $\bQ^*$ is given by \eqref{eq:barThetaKL}. Note that $\widetilde\bTheta^s$ and $\widehat{\bTheta}$ are defined on the same set, and thus $\mathcal{K}_{\bX}(\bA, \widehat\bTheta) \leq \mathcal{K}_{\bX}(\bA, \widetilde\bTheta^s)$. Adapting the proof of Theorem \ref{thm_matrix_oracle} gives 
\begin{eqnarray}\label{BorneAvecns}
\nonumber \Vert \bTheta^{*}-\widehat \bTheta\Vert^{2}_{2,\bPi}&\leq& C' \rho_n \left (\mathcal{K}_{\bPi}(\bTheta^{*}, \widetilde\bTheta^s) + \frac{\rho_n^2}{\gamma_n^2}\left(k^2 + n\log(k) \right)\right )\\
\nonumber &\leq & C' \rho_n \left (\mathcal{K}_{\bPi}(\bTheta^{*}, \widetilde\bTheta) + \mathcal{K}_{\bPi}(\bTheta^{*}, \widetilde\bTheta^s) - \mathcal{K}_{\bPi}(\bTheta^{*},\widetilde\bTheta) + \frac{\rho_n^2}{\gamma_n^2}\left(k^2 + n\log(k) \right)\right )\\
\label{tradeoff}&\leq& C' \rho_n \left (\mathcal{K}_{\bPi}(\bTheta^{*}, \widetilde\bTheta) + 2\gamma_n n_s + \frac{\rho_n^2}{\gamma_n^2}\left(k^2 + n\log(k) \right)\right )
\end{eqnarray}
where \eqref{tradeoff} follows from Lemma \ref{KLSeuil}. Note that, if there exists a sequence $\gamma_n$ such that $\rho_n = O(\gamma_n)$ and $n_s \leq k^2 \lor (n\log(k))/\rho_n$, the upper bound on the risk obtained in \eqref{BorneAvecns} matches the bound of Theorem \ref{KloppBorneInf} and is minimax optimal. 

Without any assumption on the number of small entries of the matrix of connections probabilities, we choose $\gamma_n = \gamma(\rho_n) \triangleq n^{\frac{-2}{3}}\rho_n^{\frac{2}{3}}\left(k^2 + n\log(k) \right)^{\frac{1}{3}}$ and obtain the following bound. 
\begin{cor}
Assume that $\bA$ is drawn according to \eqref{graphSeq}, and that $\rho_n = \omega(n^{-1})$. Let 
\begin{equation*}
\begin{split}
&\widehat{\bTheta}_{i<j} = \widehat{\bQ}_{\widehat{z}(i) \widehat{z}(j)},\  \widehat{\bTheta}_{ii}=0\\
&(\widehat{\bQ},\widehat{z}) \in \underset{\bQ \in [\gamma(\rho_n),\rho_n]^{k\times k}_{\rm sym}, z\in \cZ_{n,k}}{\argmin} \sum_{i<j}\bX_{ij}\cK(\bA_{ij},\bQ_{z(i)z(j)}). 
\end{split}
\end{equation*}
There exists absolute constants $C,C'>0$ such that with probability at least $1 - 9\exp\left(-C\rho_n \left( k^2 + n\log(k) \right)\right)$
 \begin{align*}
\Vert \bTheta^{*}-\widehat \bTheta\Vert^{2}_{2,\bPi}\leq C' \rho_n \left (\mathcal{K}_{\bPi}(\bTheta^{*}, \widetilde \bTheta) + \rho_n^{\frac{2}{3}} n^{\frac{4}{3}}\left(k^2 + n\log(k) \right)^{\frac{1}{3}}\right ).
\end{align*}
\end{cor}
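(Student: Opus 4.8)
The plan is to deduce this corollary from Theorem~\ref{thm_matrix_oracle} (applied with $\gamma_n = \gamma(\rho_n)$) combined with the trade-off bound \eqref{BorneAvecns}, which is itself obtained by adapting the proof of Theorem~\ref{thm_matrix_oracle} and invoking Lemma~\ref{KLSeuil}. First I would observe that since we are in the sparse regime and $\gamma(\rho_n) = n^{-2/3}\rho_n^{2/3}(k^2 + n\log(k))^{1/3}$ tends to $0$, eventually $\gamma(\rho_n) \le 1 - \rho_n$, so the minimum in the denominator $(1-\rho_n)^2 \land \gamma_n^2$ appearing in Theorem~\ref{thm_matrix_oracle} is realized by $\gamma_n^2 = \gamma(\rho_n)^2$. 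Thus the bound \eqref{BorneAvecns} applies with this choice of $\gamma_n$, giving
\begin{align*}
\Vert \bTheta^{*}-\widehat \bTheta\Vert^{2}_{2,\bPi}\leq C' \rho_n \left (\mathcal{K}_{\bPi}(\bTheta^{*}, \widetilde\bTheta) + 2\gamma(\rho_n)\, n_s + \frac{\rho_n^2}{\gamma(\rho_n)^2}\left(k^2 + n\log(k) \right)\right ).
\end{align*}

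Next I would control the two $\gamma_n$-dependent terms uniformly in $n_s$. The crude but sufficient bound is $n_s \le \binom{n}{2} \le n^2/2$, since $n_s$ counts a subset of the $\binom{n}{2}$ off-diagonal pairs. Plugging this in, the "thresholding error" term is at most $\gamma(\rho_n) n^2 = n^{4/3}\rho_n^{2/3}(k^2+n\log(k))^{1/3}$ up to the factor $2$ absorbed in $C'$. For the "variance" term, a direct substitution gives
\begin{align*}
\frac{\rho_n^2}{\gamma(\rho_n)^2} = \frac{\rho_n^2}{n^{-4/3}\rho_n^{4/3}(k^2+n\log(k))^{2/3}} = \rho_n^{2/3}\, n^{4/3}\, (k^2+n\log(k))^{1/3},
\end{align*}
so this term equals $\rho_n^{2/3} n^{4/3}(k^2+n\log(k))^{1/3}$ as well. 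The point of the particular exponents in $\gamma(\rho_n)$ is precisely that they equalize these two contributions: minimizing $\gamma n^2 + \rho_n^2 \gamma^{-2}(k^2+n\log(k))$ over $\gamma > 0$ is a standard one-variable optimization whose solution scales as $\gamma \asymp (\rho_n^2(k^2+n\log(k))/n^2)^{1/3}$, matching $\gamma(\rho_n)$. Both terms then combine into a single term $\rho_n^{2/3} n^{4/3}(k^2+n\log(k))^{1/3}$, which, after multiplying by the overall $\rho_n$ prefactor and relabeling constants, yields exactly the stated bound. The probability statement $1 - 9\exp(-C\rho_n(k^2+n\log(k)))$ is inherited verbatim from the version of Theorem~\ref{thm_matrix_oracle} used to derive \eqref{BorneAvecns}; note this differs from the $1 - 9\exp(-C\rho_n n\log(k))$ in the original theorem because the argument of \eqref{BorneAvecns} is the one appearing in the neighbouring corollaries, and no further loss in probability is incurred by the deterministic manipulations above.

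The only genuine content beyond bookkeeping is confirming that the proof of Theorem~\ref{thm_matrix_oracle} indeed goes through with $\widetilde\bTheta$ replaced by $\widetilde\bTheta^s$ from \eqref{eq:barThetaS} and with $\gamma_n$ replaced by the entry-wise lower bound $\gamma(\rho_n)$ actually enforced in the constrained optimization — this is the step flagged as "adapting the proof of Theorem~\ref{thm_matrix_oracle}" in the derivation of \eqref{BorneAvecns}. The key observations making this adaptation valid are that $\widetilde\bTheta^s$ and $\widehat\bTheta$ live in the same constraint set $[\gamma(\rho_n),\rho_n]^{k\times k}_{\rm sym}$ (so the basic inequality $\cK_{\bX}(\bA,\widehat\bTheta) \le \cK_{\bX}(\bA,\widetilde\bTheta^s)$ holds), that all entries of $\widehat\bTheta$ and $\widetilde\bTheta^s$ are bounded below by $\gamma(\rho_n)$ (so the Lipschitz/curvature estimates relating Kullback--Leibler divergence and squared Frobenius distance carry the factor $1/\gamma(\rho_n)^2$ as in Theorem~\ref{thm_matrix_oracle}), and that Lemma~\ref{KLSeuil} bounds the gap $\cK_{\bPi}(\bTheta^*,\widetilde\bTheta^s) - \cK_{\bPi}(\bTheta^*,\widetilde\bTheta)$ by $2\gamma_n n_s$. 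I expect the main obstacle — to the extent there is one — to be purely expository: making sure the constants and the probability bound quoted from \eqref{BorneAvecns} are consistent with the statement, and that the sparsity hypothesis $\rho_n = \omega(n^{-1})$ is all that is needed for $\gamma(\rho_n) \le 1-\rho_n$ to hold for large $n$ (which it is, since $\gamma(\rho_n) \to 0$ whenever $\rho_n \to 0$, and if $\rho_n \not\to 0$ the claim is vacuous up to constants). Everything else is elementary algebra with the exponents $2/3$ and $1/3$.
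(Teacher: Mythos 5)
Your proposal is correct and follows essentially the same route as the paper: it invokes the trade-off bound \eqref{tradeoff} (obtained by adapting Theorem \ref{thm_matrix_oracle} with the thresholded oracle $\widetilde\bTheta^s$ of \eqref{eq:barThetaS} and Lemma \ref{KLSeuil}), bounds $n_s$ trivially by $n^2$, and observes that the choice $\gamma(\rho_n)=n^{-2/3}\rho_n^{2/3}(k^2+n\log(k))^{1/3}$ exactly balances the terms $\gamma_n n^2$ and $\rho_n^2\gamma_n^{-2}(k^2+n\log(k))$, each equal to $\rho_n^{2/3}n^{4/3}(k^2+n\log(k))^{1/3}$. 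Your additional remarks on why the adaptation of Theorem \ref{thm_matrix_oracle} is legitimate (same constraint set, entries bounded below by $\gamma(\rho_n)$) and on the probability exponent are consistent with the paper's argument.
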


\noindent If $k$ is not too large, the rate of convergence is essentially multiplied by $\left(n\rho_n\right)^{\frac{2}{3}}$.

\subsection{Choice of $\gamma_n$ for sparse positive graphons}
\label{subsection:graphon}
 In Theorem \ref{thm_matrix_oracle} we have established an oracle bound for the maximum likelihood estimator with entries belonging to $[\gamma_n, \rho_n]$. Defining our estimator requires us to estimate the values of these two sparsity parameters, which are usually unknown. When the matrix of connections probabilities $\bTheta^*$ is generated according to the sparse graphon model \eqref{sparse_graphon} where $W^*$ is bounded away from $0$, these bounds will be of the same order of magnitude and decrease as the expected node degree. Under this assumption, we can use $\widehat{d}$, the average number of edges, to estimate $\gamma_n$ and $\rho_n$. Indeed, it is easy to see that, with probability close to $1$, $\widehat{d}$ is close to $d = \rho_n\underset{0}{\overset{1}{\int}}\underset{0}{\overset{1}{\int}}W^*(x,y)dx dy$, the expected node degree.  Note that if the graphon $W^*$ is H\"older continuous or is a step function, assuming that $W^*>0$ is enough to ensure that there exists a constant $C_{inf}>0$ such that $W^* \geq C_{inf}$. 
 
 To simplify the exposition, we will assume that we observe all the entries of $\bA$. Our results can be extended to the missing observations scheme described in Section \ref{subsection:MLEDescription} under the assumption that the entries of the sampling probability matrix $\bPi$ are bounded away from $0$. Let $\Omega$ be a subset of $\{(i,j) \in [n]^2, i<j\}$ of size $n$ sampled independently of $\bA$, and let 
\begin{equation*}
\begin{split}
&\widehat{d} = \frac{1}{n}\sum_{(i,j) \in \Omega} \bA_{ij}\\
& \widehat{\rho_n} = (\log(n))^{\frac{1}{5}}\widehat{d} \ , \ \widehat{\gamma_n} = (\log(n))^{-\frac{1}{5}}\widehat{d}. 
\end{split}
\end{equation*}
We use $\widehat{\rho_n}$ and $\widehat{\gamma_n}$ to build the restricted maximum likelihood estimator of the matrix of connections probabilities based on the   the observations of $\bA_{ij}$ with $\{(i,j) \in [n]^2, i<j\} \backslash \Omega$: 
\begin{equation*}
\begin{split}
&\widehat{\bTheta}_{i<j} = \widehat{\bQ}_{\widehat{z}(i) \widehat{z}(j)},\  \widehat{\bTheta}_{ii}=0\\
&(\widehat{\bQ},\widehat{z}) \in \underset{\bQ \in [\widehat{\gamma_n}, \widehat{\rho_n}]^{k\times k}_{\rm sym}, z\in \cZ_{n,k}}{\argmin} \sum_{(i,j) \not \in \Omega}\cK(\bA_{ij},\bQ_{z(i)z(j)}).
\end{split}
\end{equation*}
We prove the following upper bound on the risk of this adaptive estimator:
\begin{thm}\label{thm:AdaptEst}
Assume that $\bA$ is drawn according to the sparse graphon model and that $C_{inf} \triangleq \underset{(x,y) \in [0,1]^2}{\inf} W^*(x,y)>0$, $ \rho_n = o(\log(n)^{\frac{-1}{5}})$ and $\rho_n = \omega(n^{-1})$. Then, there exists positive constants $N, C, C'$ depending only on $C_{inf}$, such that, for $n \geq N$, with probability at least $1 - 7\exp\left( - C n \rho_n \right)$, we have
\begin{align*}
\left\Vert \bTheta^{*}-\widehat \bTheta\right\Vert^{2}_{2}\leq C' \rho_n \log(n) \left (\mathcal{K}(\bTheta^{*}, \widetilde \bTheta) + \left(k^2 + n\log(k)\right)\right ).
\end{align*}
\end{thm}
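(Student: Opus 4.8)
The plan is to derive Theorem~\ref{thm:AdaptEst} from the oracle bound of Theorem~\ref{thm_matrix_oracle} (in its full-observation form, the Corollary with $\bPi_{ij}=1$) by controlling the randomness of the data-driven bounds $\widehat{\gamma_n}$ and $\widehat{\rho_n}$ on a good event. First I would show that the empirical average degree $\widehat{d}=\frac1n\sum_{(i,j)\in\Omega}\bA_{ij}$ concentrates around $d=\rho_n\int_0^1\int_0^1 W^*(x,y)\,dx\,dy$. Since $\Omega$ has size $n$, is sampled independently of $\bA$, and the summands are independent Bernoulli$(\bTheta^*_{ij})$ with $\bTheta^*_{ij}\in[C_{\mathrm{inf}}\rho_n,\rho_n]$, a Bernstein or Chernoff bound (conditionally on $\Omega$ and $\zeta$, then integrating) gives that with probability at least $1-2\exp(-Cn\rho_n)$ we have $d/2\le\widehat d\le 2d$, say, and more precisely $|\widehat d-d|\le \tfrac12 C_{\mathrm{inf}}\rho_n$ for $n$ large enough. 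On this event, using $C_{\mathrm{inf}}\rho_n\le d\le\rho_n$, the quantities $\widehat{\rho_n}=(\log n)^{1/5}\widehat d$ and $\widehat{\gamma_n}=(\log n)^{-1/5}\widehat d$ satisfy, for $n\ge N(C_{\mathrm{inf}})$,
\[
\rho_n \le \widehat{\rho_n} \le 2(\log n)^{1/5}\rho_n <1,
\qquad
\tfrac{C_{\mathrm{inf}}}{2}(\log n)^{-1/5}\rho_n \le \widehat{\gamma_n} \le \rho_n,
\]
where the constraint $\rho_n=o((\log n)^{-1/5})$ is exactly what guarantees $\widehat{\rho_n}<1$ eventually, and $\widehat{\gamma_n}\le\bTheta^*_{ij}$ because $\widehat\gamma_n\le\rho_n$ while $\bTheta^*_{ij}\ge C_{\mathrm{inf}}\rho_n$ — wait, that last inequality needs $\widehat\gamma_n\le C_{\mathrm{inf}}\rho_n$, which follows from $\widehat d\le\rho_n$ and the factor $(\log n)^{-1/5}\to0$, so for $n$ large $\widehat\gamma_n\le (\log n)^{-1/5}\rho_n\le C_{\mathrm{inf}}\rho_n$. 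Thus $\widehat\gamma_n\le\bTheta^*_{ij}\le\rho_n\le\widehat\rho_n$ for all $i<j$, i.e. condition~\eqref{cond_bounds} holds with the \emph{random but admissible} pair $(\widehat\gamma_n,\widehat\rho_n)$.

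Next I would invoke Theorem~\ref{thm_matrix_oracle} (full-observation corollary) conditionally on the sampling of $\Omega$, $\widehat d$, and $\zeta$, treating $\Omega$ as a deterministic set of $\binom n2-n$ "observed" pairs; since $|\Omega|=n=o(n^2)$, removing these $n$ entries changes the effective sample size by a negligible factor and one checks the proof of Theorem~\ref{thm_matrix_oracle} goes through with $\bPi$ replaced by $\1_{(i,j)\notin\Omega}$ and constants unchanged (this is the "extended to the missing observations scheme" remark). This yields, on the intersection with a probability-$1-9\exp(-C n\widehat\rho_n\log k)\ge 1-9\exp(-Cn\rho_n\log k)$ event,
\[
\Vert\bTheta^*-\widehat\bTheta\Vert_2^2
\le C'\widehat\rho_n\Big(\mathcal K(\bTheta^*,\widetilde\bTheta)
+\frac{\widehat\rho_n^2}{(1-\widehat\rho_n)^2\wedge\widehat\gamma_n^2}\,(k^2+n\log k)\Big).
\]
Now substitute the bounds from the good event: $\widehat\rho_n\le 2(\log n)^{1/5}\rho_n$, and $(1-\widehat\rho_n)^2\wedge\widehat\gamma_n^2=\widehat\gamma_n^2$ eventually (since $\widehat\gamma_n\to0$ and $1-\widehat\rho_n\to1$), with $\widehat\gamma_n\ge\tfrac{C_{\mathrm{inf}}}{2}(\log n)^{-1/5}\rho_n$, so $\widehat\rho_n^2/\widehat\gamma_n^2 \le (4/C_{\mathrm{inf}}^2)(\log n)^{2/5}\cdot(\log n)^{2/5}=O((\log n)^{4/5})$; hmm, let me recompute: $\widehat\rho_n^2/\widehat\gamma_n^2\le (2(\log n)^{1/5}\rho_n)^2/(\tfrac{C_{\mathrm{inf}}}{2}(\log n)^{-1/5}\rho_n)^2 = (16/C_{\mathrm{inf}}^2)(\log n)^{4/5}$. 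Then the second term is $\widehat\rho_n\cdot O((\log n)^{4/5})(k^2+n\log k) = O((\log n)^{1/5}\rho_n)\cdot O((\log n)^{4/5})(k^2+n\log k)=O(\rho_n\log n)(k^2+n\log k)$, and the first term is $\widehat\rho_n\,\mathcal K(\bTheta^*,\widetilde\bTheta)\le 2(\log n)^{1/5}\rho_n\,\mathcal K(\bTheta^*,\widetilde\bTheta)\le C'\rho_n\log n\,\mathcal K(\bTheta^*,\widetilde\bTheta)$. Combining gives exactly the claimed bound $\Vert\bTheta^*-\widehat\bTheta\Vert_2^2\le C'\rho_n\log n\,(\mathcal K(\bTheta^*,\widetilde\bTheta)+k^2+n\log k)$. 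Finally, a union bound over the concentration event ($\le 2\exp(-Cn\rho_n)$ failure) and the oracle event ($\le 9\exp(-Cn\rho_n\log k)\le 9\exp(-Cn\rho_n)$ failure, absorbing the $\log k\ge\log 2$) gives total failure probability $\le 7\exp(-Cn\rho_n)$ after adjusting constants (recombining the $9+2=11$ exponential terms into $7$ by enlarging $C$ slightly, or simply bounding $11e^{-Cn\rho_n}\le 7e^{-C'n\rho_n}$ for a smaller $C'$).

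The main obstacle — the step requiring the most care — is verifying that Theorem~\ref{thm_matrix_oracle} can legitimately be applied with the \emph{random} pair $(\widehat\gamma_n,\widehat\rho_n)$. The clean way is conditioning: given $(\Omega,\zeta,\widehat d)$, the pair $(\widehat\gamma_n,\widehat\rho_n)$ is a fixed admissible pair (on the good event), the estimator \eqref{def_est_adap} uses only the $\bA_{ij}$, $(i,j)\notin\Omega$, which are still conditionally independent Bernoullis, and Theorem~\ref{thm_matrix_oracle}'s conclusion holds with its own conditional probability; then integrate. One must check that the oracle matrix $\widetilde\bTheta$ appearing in the final bound — defined in \eqref{eq:barThetaKL} relative to the \emph{full} Kullback--Leibler divergence and the full index set, not relative to $\widehat\gamma_n$ — is the right object; this is handled by the second Remark after Theorem~\ref{thm_matrix_oracle}, noting $\mathcal K(\bTheta^*,\widetilde\bTheta)\le\mathcal K(\bTheta^*,\bTheta')$ for the KL-oracle $\bTheta'$, and by the same splitting-$\gamma_n$ argument (Lemma~\ref{KLSeuil}) used in \eqref{tradeoff} to pass from the truncated oracle $\widetilde\bTheta^s$ (with entries $\vee\,\widehat\gamma_n$) back to $\widetilde\bTheta$ — here the extra term $2\widehat\gamma_n n_s$ is at most $2\widehat\gamma_n\binom n2\le n^2(\log n)^{-1/5}\rho_n$, which however is \emph{not} obviously dominated by $\rho_n\log n\,(k^2+n\log k)$ unless $n_s$ is controlled; a cleaner route avoiding this is to note that since $W^*\ge C_{\mathrm{inf}}>0$ we actually have $\bTheta^*_{ij}\ge C_{\mathrm{inf}}\rho_n\ge\widehat\gamma_n$, hence $\widetilde\bTheta^s=\widetilde\bTheta$ (no truncation occurs, $n_s=0$) — this is precisely why the positivity hypothesis $C_{\mathrm{inf}}>0$ is assumed, and it is the key simplification that makes the whole argument go through without the trade-off term.
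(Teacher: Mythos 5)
Your proposal is correct and follows essentially the same route as the paper: concentration of $\widehat{d}$ to bracket $(\widehat{\gamma_n},\widehat{\rho_n})$ between deterministic envelopes of order $\rho_n\log(n)^{\mp 1/5}$, feasibility of the oracle $\widetilde\bTheta$ thanks to $C_{inf}>0$ (so no truncation/trade-off term is needed), an application of the Theorem~\ref{thm_matrix_oracle} machinery conditionally on the good event with the deterministic weighting $\1\{(i,j)\notin\Omega\}$ (for which only the Lemma~\ref{ControlKlX}-type concentration is actually needed, which is why the paper ends up with $5+2=7$ exponential terms), and then substitution of the $\log(n)^{\pm 1/5}$ factors. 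The only steps your write-up glosses over are minor bookkeeping: the oracle bound with $\bPi_{ij}=\1\{(i,j)\notin\Omega\}$ controls only $\sum_{(i,j)\notin\Omega}(\bTheta^*_{ij}-\widehat\bTheta_{ij})^2$, so the $n$ held-out entries must be added separately (the paper bounds them by $n\underline{\rho_n}^2$, absorbed since $n\underline{\rho_n}^2\leq\underline{\rho_n}\,\underline{\epsilon_n}$), and your recombination of the failure probabilities into $7\exp(-Cn\rho_n)$ is valid only for $n$ large, which is fine because $n\rho_n\to\infty$.
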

 
In the sparse graphon model, if the graphon $W^*$ is bounded away from $0$ and $ n^{-1} \ll \rho_n \ll \log(n)^{\frac{-1}{5}}$, our adaptive estimator is optimal in the minimax sense up to a log factor. This setting includes the sparse stochastic block model considered for example in \cite{LikelihoodBickel}: in this model, the matrix of probabilities of connection between communities $\bQ^*$ is such that $\bQ^* = \rho_n \bQ^0$, for some fixed matrix $\bQ^0$ with entries in $(0,1]$ (in this case, the graphon function $W^*$ is a step function bounded away from $0$). More generally, Theorem \ref{thm:AdaptEst} covers the case of $\bTheta^* = \rho_n \bTheta^0$, and $\bTheta^0$ is a fixed matrix with entries in $(0, 1]$.

When we can not assume that the graphon $W^*$ is bounded away from $0$, we can  use the same trade-off as in \eqref{eq:barThetaS} and choose $\widehat{\gamma_n} = \gamma(\widehat\rho_n)$. Then, with high probability, we obtain  the following bound on the risk of the adaptative estimator: 
\begin{align*}
\Vert \bTheta^{*}-\widehat \bTheta\Vert^{2}_{2}\leq C' \rho_n \log(n)\left (\mathcal{K}(\bTheta^{*}, \widetilde \bTheta) + \left(\log(n)^{\frac{1}{5}} \rho_n\right)^{\frac{2}{3}} n^{\frac{4}{3}}\left(\frac{k^2}{n^2} + n\log(k) \right)^{\frac{1}{3}}\right).
\end{align*}

\subsection{Smooth graphons}
\label{subsection:MLEGraphon}

We have established a non-asymptotic bound on the risk of the maximum likelihood estimator depending on the Kullback-Leibler divergence between $\bTheta^*$ and its oracle approximation by a block constant matrix corresponding to a SBM with $k$ communities. 
While studying the graphon model \eqref{sparse_graphon}, two classes of graphons are of particular interest: step function graphons and H\"older continuous graphons \cite{Olhede14722, KloppGraphon, gao2015optimal,USVTXu}. A graphon $W$ is called a \textit{step function} if there exists a partition $S_1 \cup ... \cup S_k$ of $[0,1]$ into measurable sets such that the graphon $W$ is constant on any product set $S_a \times S_b$.  For step function graphons, the model corresponds to the stochastic block model described in \eqref{blockmodel}: in this case, the oracle matrix $\widetilde{\bTheta}$ is equal to the matrix of connections probabilities $\bTheta^*$. Next, we bound the Kullback-Leibler divergence between $\bTheta^*$ and its oracle approximation by a block constant matrix for H\"older continuous graphons. We also provide the optimal choice for the number of communities $k$ for our estimator. 

We  consider graphons that are weakly isomorphic to a smooth function. More precisely, for any $\alpha >0$ and $M>0$, let $\cF_{\alpha}(M)$ be the class of H\"older functions, defined as follows:
\begin{eqnarray*}
\cF_{\alpha}(M) = \Big\{W &: &[0,1]^2 \rightarrow [0,1],\ \forall (x,y),\ (x',y') \in [0,1]^2, \\
&&\left \vert W(x',y') - \cP_{\lfloor\alpha \rfloor}((x,y), (x'-x, y'-y)) \right \vert \leq M\left(\left \vert x-x' \right \vert^{\alpha - \lfloor\alpha \rfloor} + \left \vert y-y' \right \vert^{\alpha - \lfloor\alpha \rfloor}\right)  \Big\}
\end{eqnarray*}
where $\cP_{\lfloor\alpha \rfloor}((x,y), \cdot )$ is the Taylor polynomial of $W$ of degree $\lfloor\alpha \rfloor$ at point $(x,y)$. In particular, if $W \in \cF_{\alpha}(M)$, $\forall (x,y),\ (x',y') \in [0,1]^2$, 

\begin{equation}
\label{Hoelder}
\left \vert W(x',y') - W(x,y)\right \vert \leq M\left(\left \vert x-x' \right \vert^{\alpha \land 1} + \left \vert y-y' \right \vert^{\alpha\land 1}\right).
\end{equation}
When the graphon is H\"older continuous, the following proposition provides an upper bound on the Kullback-Leibler divergence between $\bTheta^*$ and $\widetilde{\bTheta}$.
\begin{prp}\label{prop:MLGraphon}
Consider the sparse graphon model \eqref{sparse_graphon} with $W^* \in \cF_{\alpha}(M)$ where $\alpha, M>0$ and we assume that $C_{inf} \triangleq \underset{(x,y) \in [0,1]^2}{\inf} W^*(x,y)>0$, $\rho_n\leq 1 - C_{inf}$ and $\rho_n = \omega(n^{-1})$. Then, almost surely, there exists a k-block constant matrix $\bTheta^{bc}$ such that
\begin{equation}\label{bound_kullback}
  \cK\left(\bTheta^*, \bTheta^{bc} \right) \leq  \frac{4n^2\rho_nM^2}{C_{inf}(1-\rho_n)} \left(\frac{1}{k}\right)^{2(\alpha \land 1)}.  
\end{equation}
\end{prp}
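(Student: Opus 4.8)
The plan is to exhibit $\bTheta^{bc}$ explicitly. I would use a uniform partition of $[0,1]$ into $k$ pieces to define the communities, approximate $W^*$ on each block by its value at a representative point, and then bound each Bernoulli Kullback--Leibler term by the squared difference of its parameters, which the H\"older condition controls blockwise.

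Concretely, I would split $[0,1]$ into $k$ consecutive intervals $I_1,\dots,I_k$ of length $1/k$, fix an arbitrary $x_a\in I_a$ for each $a$, and set $z(i)=a$ whenever $\zeta_i\in I_a$, so that $z\in\cZ_{n,k}$. Define $\bQ_{ab}=\rho_n W^*(x_a,x_b)$, which is symmetric, and $\bTheta^{bc}_{ij}=\bQ_{z(i)z(j)}$ for $i\neq j$, $\bTheta^{bc}_{ii}=0$. Because $C_{inf}\le W^*\le 1$ and $\rho_n\le 1-C_{inf}$, every $\bQ_{ab}$ lies in $[\rho_n C_{inf},\rho_n]\subset(0,1)$, so $\bTheta^{bc}$ is a legitimate $k$-block constant matrix and all divergences below are finite. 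This construction depends only on the realized $(\zeta_i)_{i\in[n]}$ and works for every outcome, so the ``almost surely'' in the statement is automatic.

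For the per-entry estimate, fix $i<j$ and write $p=\bTheta^*_{ij}=\rho_n W^*(\zeta_i,\zeta_j)$ and $q=\bTheta^{bc}_{ij}=\rho_n W^*(x_{z(i)},x_{z(j)})$, both in $(0,1)$. Applying $\log t\le t-1$ to each logarithm in $\cK(p,q)=p\log(p/q)+(1-p)\log\frac{1-p}{1-q}$ gives, after cancellation, the elementary bound $\cK(p,q)\le (p-q)^2/\bigl(q(1-q)\bigr)$. Since $\zeta_i$ and $x_{z(i)}$ both lie in an interval of length $1/k$, and likewise for $j$, the H\"older inequality \eqref{Hoelder} gives $\bigl|W^*(\zeta_i,\zeta_j)-W^*(x_{z(i)},x_{z(j)})\bigr|\le 2M(1/k)^{\alpha\land 1}$, hence $(p-q)^2\le 4\rho_n^2 M^2(1/k)^{2(\alpha\land 1)}$; and $q\ge\rho_n C_{inf}$ together with $1-q\ge 1-\rho_n$ gives $q(1-q)\ge\rho_n C_{inf}(1-\rho_n)$. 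Therefore
\[
\cK\!\left(\bTheta^*_{ij},\bTheta^{bc}_{ij}\right)\le \frac{4\rho_n M^2}{C_{inf}(1-\rho_n)}\left(\frac{1}{k}\right)^{2(\alpha\land 1)},
\]
and summing over the $\binom{n}{2}\le n^2$ pairs $i<j$ (the diagonal contributes $\cK(0,0)=0$) yields the claimed bound.

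I do not expect a genuine obstacle here: the argument is a one-shot construction plus an elementary divergence inequality. The only points requiring care are the bookkeeping of constants --- in particular, lower bounding $1-q$ by $1-\rho_n$ (rather than by $C_{inf}$, which would also be available since $q\le\rho_n\le 1-C_{inf}$) so that the factor $(1-\rho_n)$ appears exactly as stated, and keeping the exponent $\alpha\land 1$ correctly attached to the $1/k$ block size when invoking \eqref{Hoelder}. If a smaller constant were wanted, replacing $\bQ_{ab}$ by the average of $\rho_n W^*$ over $I_a\times I_b$ would work identically, but the representative-point choice already gives the stated inequality.
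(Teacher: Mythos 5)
Your proof is correct and follows essentially the same route as the paper's: a uniform partition of $[0,1]$ into $k$ intervals with a representative point in each (the paper uses the endpoint $z^*(i)/k$), the elementary bound $\cK(p,q)\le (p-q)^2/(q(1-q))$ obtained from $\log t\le t-1$, the H\"older estimate \eqref{Hoelder} giving $(p-q)^2\le 4\rho_n^2M^2 k^{-2(\alpha\land1)}$, and the denominator bound $q(1-q)\ge C_{inf}\rho_n(1-\rho_n)$ before summing over the at most $n^2$ pairs. No gaps; the constants match the statement.
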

Proposition \ref{prop:MLGraphon} enables us to bound the bias of estimating $\bTheta^*$ by an oracle SBM with $k$ communities. On the other hand, the bound given in Theorem \ref{thm_matrix_oracle} can be considered as the variance term of a block constant estimator with $k$ blocks. To optimize the trade-off between these two terms, we choose $k$ as follows 
\begin{equation}\label{choicek}
   k = \left \lceil n^{\frac{1}{1+ (\alpha\land 1)}}\rho_n^{\frac{1}{2+ 2(\alpha\land 1)}}  \right \rceil
\end{equation}
and obtain the following result:
\begin{thm}\label{thm:MySupGraphon}
Consider the sparse graphon model \eqref{sparse_graphon} with $W^* \in \cF_{\alpha}(M)$ where $\alpha, M>0$ and we assume that $C_{inf} \triangleq \underset{(x,y) \in [0,1]^2}{\inf} W^*(x,y)>0$, $\rho_n\leq 1 - C_{inf}$ and that $\rho_n = \omega(n^{-1})$. Then,
there exists constants $C, C'>0$, depending only on $M$, $\alpha$ and $C_{inf}$, such that, the restricted maximum likelihood estimator defined by \eqref{eq:barThetaKL} constructed with $k$ defined by \eqref{choicek} satisfies
$$ \left\Vert\bTheta_{ij}^* - \widehat \bTheta_{ij}\right\Vert_2^2 \leq C\rho_n\left(n^{\frac{2}{1 + (\alpha \land 1)}}\rho_n^{\frac{1}{1 + (\alpha \land 1)}}  + n\log(\rho_n n) \right)$$
with probability larger than $1- 9\exp(-C'\rho_n n \log(\rho_nn)).$
\end{thm}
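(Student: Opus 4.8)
The plan is to combine the oracle bound of Theorem~\ref{thm_matrix_oracle} (the ``variance'' term) with the bias bound of Proposition~\ref{prop:MLGraphon} (the ``bias'' term), and then optimize over $k$ with the choice \eqref{choicek}. First I would apply Theorem~\ref{thm_matrix_oracle} with full observations ($\bPi_{ij}=1$), using the weaker form of the bound noted in the second remark, so that the risk is controlled by $C'\rho_n\big(\cK(\bTheta^*,\widetilde\bTheta) + \tfrac{\rho_n^2}{(1-\rho_n)^2 \wedge \gamma_n^2}(k^2 + n\log k)\big)$ on an event of probability at least $1 - 9\exp(-C\rho_n n\log k)$. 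Since $W^* \in \cF_\alpha(M)$ is bounded away from $0$, we may take $\gamma_n$ of order $\rho_n$ (explicitly $\gamma_n = C_{inf}\rho_n$), so that $\rho_n^2/\gamma_n^2$ is a constant depending only on $C_{inf}$; likewise $\rho_n \le 1 - C_{inf}$ makes $(1-\rho_n)^{-2}$ bounded. Hence the variance term simplifies to $C'\rho_n\big(\cK(\bTheta^*,\widetilde\bTheta) + k^2 + n\log k\big)$ with constants depending only on $C_{inf}$.

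Next I would bound $\cK(\bTheta^*,\widetilde\bTheta)$. Since $\widetilde\bTheta$ is by definition \eqref{eq:barThetaKL} the minimizer of the (here unweighted) Kullback--Leibler divergence over $k$-block-constant matrices with entries in the admissible range, and $\bTheta^{bc}$ from Proposition~\ref{prop:MLGraphon} is one such competitor (one checks its entries lie in $[\gamma_n,\rho_n]$ up to adjusting constants, since $W^* \in [C_{inf},1]$ and we rescale by $\rho_n$), we get $\cK(\bTheta^*,\widetilde\bTheta) \le \cK(\bTheta^*,\bTheta^{bc}) \le \frac{4n^2\rho_n M^2}{C_{inf}(1-\rho_n)}k^{-2(\alpha\wedge 1)} \le C\,n^2\rho_n\,k^{-2(\alpha\wedge 1)}$, absorbing the $C_{inf}$, $M$, $\rho_n\le 1-C_{inf}$ dependence into the constant. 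Plugging this into the display gives
\begin{align*}
\Vert \bTheta^* - \widehat\bTheta\Vert_2^2 \le C'\rho_n\left( n^2\rho_n k^{-2(\alpha\wedge 1)} + k^2 + n\log k \right).
\end{align*}

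It remains to substitute $k = \lceil n^{1/(1+(\alpha\wedge 1))}\rho_n^{1/(2+2(\alpha\wedge 1))}\rceil$ and verify that this choice balances the first two terms. Writing $\beta = \alpha\wedge 1$, one has $k^2 \asymp n^{2/(1+\beta)}\rho_n^{1/(1+\beta)}$ and $n^2\rho_n k^{-2\beta} \asymp n^2\rho_n \cdot n^{-2\beta/(1+\beta)}\rho_n^{-\beta/(1+\beta)} = n^{2/(1+\beta)}\rho_n^{1/(1+\beta)}$, so the two terms are of the same order, yielding the claimed $n^{2/(1+(\alpha\wedge 1))}\rho_n^{1/(1+(\alpha\wedge 1))}$ contribution. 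The $n\log k$ term becomes $n\log k \asymp n\log(n\rho_n^{1/2}) \asymp n\log(\rho_n n)$ (using $\rho_n = \omega(n^{-1})$ to see $\log(\rho_n^{1/2}n^{\cdots})$ is comparable to $\log(\rho_n n)$, and that $\log k \gtrsim 1$); this matches the second term in the theorem. Finally I would translate the probability bound: with the chosen $k$, $\rho_n n\log k \asymp \rho_n n \log(\rho_n n)$, so the event has probability at least $1 - 9\exp(-C'\rho_n n\log(\rho_n n))$, which is the stated bound.

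\textbf{Main obstacle.} The routine part is the arithmetic of balancing exponents; the one place that needs genuine care is checking that $\bTheta^{bc}$ from Proposition~\ref{prop:MLGraphon} (or a slight modification of it) is an \emph{admissible} competitor in the minimization \eqref{eq:barThetaKL} defining $\widetilde\bTheta$ — i.e.\ that its entries lie in $[\gamma_n,\rho_n]$ — so that the oracle inequality $\cK(\bTheta^*,\widetilde\bTheta) \le \cK(\bTheta^*,\bTheta^{bc})$ is legitimate; this is where the hypotheses $C_{inf} > 0$ and $\rho_n \le 1 - C_{inf}$ are doing their work, and one must be slightly careful that the construction in Proposition~\ref{prop:MLGraphon} respects these bounds (or shrink $\gamma_n$ and enlarge the constant accordingly). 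A secondary point is handling $\log k$ versus $\log(\rho_n n)$ cleanly in the regime $\rho_n = \omega(n^{-1})$, $\rho_n = o(1)$, which requires only elementary estimates but should be stated explicitly.
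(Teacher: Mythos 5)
Your proposal is correct and follows essentially the same route as the paper, which gives no separate proof for this theorem beyond the discussion in Section 2.5: apply Theorem \ref{thm_matrix_oracle} with full observations and $\gamma_n = C_{inf}\rho_n$ (so $\rho_n^2/\gamma_n^2$ is an absolute constant and the block-constant competitor $\bTheta^{bc}$ of Proposition \ref{prop:MLGraphon}, whose entries lie in $[C_{inf}\rho_n,\rho_n]$, is admissible in \eqref{eq:barThetaKL}), bound $\cK(\bTheta^*,\widetilde\bTheta)$ by \eqref{bound_kullback}, and plug in the $k$ of \eqref{choicek} to balance the two terms. The identification $\log k \asymp \log(\rho_n n)$ that you flag is exactly the simplification the paper itself makes in stating the rate and the probability, so your write-up matches the intended argument.
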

The bound  obtained in Theorem \ref{thm:MySupGraphon} matches the minimax optimal rate established in
\cite{KloppGraphon}  and proves that the maximum likelihood estimator is optimal for estimating the matrix of connections probabilities in graphon model for graphons $W^*$ in the H\"older class.

\section{Variational approximation to the maximum likelihood estimator}\label{Section:Variational}

Due to the existence of local minima in the likelihood function and the subsequent necessary optimisation over the set of $k^n$ labels, the maximum likelihood estimator defined in Equation \eqref{eq:MLE} cannot be computed in polynomial time. A tractable variational approximation to this estimator has been introduced in \cite{VarEst} to study dense and fully-observed stochastic block models. It has been extended to sparse stochastic block models in \cite{LikelihoodBickel}. More recently, the authors of \cite{TabPra} used variational methods to approximate the maximum likelihood estimator in networks with missing observations. This method estimates the matrix $\bQ^*$ of probabilities of connections between communities, and the proportions of the different communities, and can be used to estimate the community label function $z^*$ and the matrix $\bTheta^*$ of probabilities of connections between nodes.

Note that variational approximations to the maximum likelihood estimator have been introduced in a model closely related to the conditional stochastic block model described in Equation \eqref{blockmodel}. In this model, the labels are assumed to be latent random variables drawn according to the following distribution\begin{equation*}
    \forall \ 1\leq i \leq n, z(i) \sim \mathcal{M}(\alpha^*)
\end{equation*}where $\alpha^* = (\alpha^*_1, ..., \alpha^*_k)$, and $\alpha^*_an$ is the expected size of community $a$. Conditionally on $z = z^*$, the edges are then drawn according to the conditional stochastic block model described in Equation \eqref{blockmodel}. Note that the stochastic block model with random labels is a special case of the graphon model described in Equation \eqref{dense_graphon}, in which the graphon $W^*$ is a step function.

The likelihood of the observed adjacency matrix under a stochastic block model with random labels with parameters $(\alpha, \bQ)$ is given by \begin{eqnarray*}
\mathfrak{l}_{\bX}(\bA; \alpha, \bQ) &=& \underset{z \in \cZ_{n,k}}{\sum} \left(\underset{i \leq n}{\prod}\alpha_{z(i)} \right) \exp \left(\cL_{\bX}(\bA;z,\bQ) \right).
\end{eqnarray*}The Expectation-Maximization (EM) algorithm cannot be used to maximise $\mathfrak{l}_{\bX}$, as it would require to evaluate the expectation of the label function $z$ for given parameters $(\alpha, \bQ)$ by summing over $k^n$ possible labels. To circumvent this problem, variational methods approximate the posterior distribution of $z$ given the observed entries $\bX \odot \bA$ of the adjacency matrix $\bA$, denoted $\mathbb{P}\left(\cdot \vert \bX\odot\bA, \alpha, \bQ \right)$, by a simpler distribution chosen so that the expectation step of the EM procedure becomes tractable. More precisely, the posterior distribution $\mathbb{P}\left(\cdot \vert \bX\odot\bA, \alpha, \bQ \right)$ is approximated by a multinomial distribution denoted $\mathbb{P}_\tau$, such that $\mathbb{P}_\tau(z) = \prod_{1\leq i \leq n} m(z\vert \tau^i)$, where $m( \cdot \vert \tau^i)$ is the density of the multinomial distribution with parameter $\tau^i = \left(\tau^i_1, ..., \tau^i_K\right)$, and $\tau = \left(\tau^1, ..., \tau^n\right)$. Then, the variational estimator is defined as
\begin{eqnarray*}
\left(\widehat{\alpha}^{VAR}, \widehat{\bQ}^{VAR}, \widehat{\tau}^{VAR}\right) &=&  \underset{(\alpha, \bQ) \in \mathcal{Q}, \tau \in \mathcal{T}}{\argmax} \mathcal{J}_{\bX}(\bA; \tau, \alpha, \bQ)\\
\text{for }\ \ \  \mathcal{J}_{\bX}(\bA; \tau, \alpha, \bQ)  &=& \log\left(\mathfrak{l}_{\bX}(\bA; \alpha, \bQ)\right) - KL\left(\mathbb{P}_\tau(\cdot)\vert\vert\mathbb{P}\left(\cdot \vert \bX\odot\bA, \alpha, \bQ \right) \right)
\end{eqnarray*}
where $\mathcal{Q}$ and $\mathcal{T}$ are the respective parameter spaces for the parameters $(\alpha,\bQ)$ and $\tau$, and $KL$ denotes the Kullback-Leibler divergence between two distribution. Note that $\exp\left(\mathcal{J}_{\bX}(\bA; \tau, \alpha, \bQ)\right)$ provides a lower bound on $\mathfrak{l}_{\bX}(\bA; \alpha, \bQ)$, as $KL\left(\mathbb{P}_\tau(\cdot)\vert\vert\mathbb{P}\left(\cdot \vert \bX\odot\bA, \alpha, \bQ \right) \right) \geq 0$ for any parameter $(\alpha, \bQ)$. The variational estimator is obtained using the EM algorithm derived in \cite{TabPra}, which alternates between the following two steps :
\begin{itemize}
    \item Estimation Step : for given parameters $(\alpha, \bQ)$, the variational parameter $\tau$ maximizing $\mathcal{J}_{\bX}(\bA; \tau, \alpha, \bQ)$ is given by the following fixed point equation :  
    $$\tau^i_a =  c_i \alpha_a \prod_{j\neq i : \bX_{ij} = 1} \prod_{b \leq k} \left(\bQ_{ab}^{\bA_{ij}}\left(1-\bQ_{ab}\right)^{1-\bA_{ij}}\right)^{\tau_b^j} \ \ \text{ where $c_i$ is a normalising constant};$$
    \item Maximisation Step : for a given parameter $\tau$, the parameters $(\alpha, \bQ)$ maximizing $\mathcal{J}_{\bX}(\bA; \tau, \alpha, \bQ)$ are given by  
    $$\alpha_a = \frac{\sum_i \tau^i_a}{n} \text{   ,    } \bQ_{ab} = \frac{\sum_{i \neq j}\bX_{ij} \tau^i_a\tau^j_b \bA_{ij}}{\sum_{i \neq j}\bX_{ij} \tau^i_a\tau^j_b}.$$
\end{itemize}
This algorithm is implemented in the package \href{https://cran.r-project.org/web/packages/missSBM/index.html}{\texttt{missSBM}}. Since the EM algorithm is not guaranteed to converge to a global maximum, a clever initialization of the parameter $\tau$ using a clustering step is performed.

Theoretical properties of the variational estimator have been first studied in \cite{VarEst} for the dense stochastic block model, in \cite{LikelihoodBickel} for the sparse stochastic block model, and more recently in \cite{TabThe} for a dense stochastic block model with observations missing uniformly at random. These results establish that maximizing $\max_{\tau \in \mathcal{T}} \mathcal{J}_{\bX}(\bA; \tau, \alpha, \bQ)$ is equivalent to maximizing $\mathfrak{l}_{\bX}(\bA; \alpha, \bQ)$. They also establish that the estimator obtained by maximizing $\mathfrak{l}_{\bX}(\bA; \alpha, \bQ)$ converges to the true parameters $(\alpha^*, \bQ^*)$, and conclude that $(\widehat{\alpha}^{VAR}, \widehat{\bQ}^{VAR})$ also converges to $(\alpha^*, \bQ^*)$. 

We build on this approximation and use the following estimator for the label function $z^*$:
\begin{eqnarray*}.
   &&\forall \ i \leq n, \ \widehat{z}^{VAR}(i) \triangleq \argmax_{a\leq k} \left(\widehat{\tau}^{VAR}\right)^i_a 
\end{eqnarray*}
Next, we compare the asymptotic behavior of $\widehat{z}^{VAR}$ with that of the profile maximum likelihood estimator of the label function $\widehat{z}$, defined as
\begin{equation*}
(\widehat{\bQ},\widehat{z}) \in \underset{\bQ \in \mathcal{Q}, z\in \cZ_{n,k}}{\argmin}\sum_{i<j} \bX_{ij}\cK (\bA_{ij},\bQ_{z(i)z(j)})
\end{equation*}
where we use the convention that $\bQ \in \mathcal{Q}$ if there exists $\alpha$ such that $(\alpha, \bQ) \in \mathcal{Q}$. 

In Proposition \ref{prp:variational}, we establish the asymptotic equivalence of $\widehat{z}^{VAR}$ and of $\widehat{z}$ in the case of sparse stochastic block model with full observations. To prove it, we will assume that the proportions of different communities are held constant, while the probabilities of connections between communities may decreases at rate $\rho_n$. That is, the parameters $(\alpha^*,\bQ^*)$ verify

\begin{enumerate}[label=A\arabic*]
\item $\alpha^* = \alpha^0$ for some fixed $\alpha^0$ such that $\alpha^0_a >0$ for any $a\in \{1,...,k\}$ \label{A1}
\item $\bQ^* = \rho_n \bQ^0$ for some fixed $\bQ^0 \in (0,1)^{k \times k}$ such that $\overset{k}{\underset{a,b = 1}{\sum}} \alpha^0_{a}\alpha^0_{b}\bQ^0_{ab} = 1$\label{A2}
\end{enumerate}
The normalisation constraint $\overset{k}{\underset{a,b = 1}{\sum}} \alpha^0_{a}\alpha^0_{b}\bQ^0_{ab} = 1$ ensure the identifiability of the parameters $(\bQ^0, \rho_n)$ (see \cite{LikelihoodBickel}). In the following, we denote by $\mathcal{Q}$ the set of parameters $(\alpha, \bQ)$ verifying Assumptions \ref{A1} and \ref{A2}. Note that the stochastic block model is identifiable up to a simultaneous permutation of communities and of rows and columns of the parameters $\bQ^*$. For any two label functions $z, z'$, we write $z \sim z'$ if there exists a permutation $\sigma$ of $\{1, ..., k\}$ such that $\left(z(\sigma(a))\right)_{a\leq k} = \left(z(a)\right)_{a\leq k}$. 

\begin{prp}\label{prp:variational}
Assume that $(z^*,\bA)$ is generated from a stochastic block model with parameters $(\alpha^*, \bQ^*) \in \mathcal{Q}$ such that $\bQ^0$ has no identical columns and the sparsity inducing sequence $\rho_n$ satisfies $\rho_n\gg \log(n)/n$.
Then, under Assumptions \ref{A1} and \ref{A2}, $\mathbb{P}\left(\widehat{z}^{VAR} \sim \widehat{z}\right) \rightarrow 1$ when $n \rightarrow \infty$.
\end{prp}

The proof of Proposition \ref{prp:variational} relies on results established in \cite{LikelihoodBickel} and \cite{Bickel21068}, and is given in Section \ref{proof:variational}. These results have been extended in \cite{TabThe} to the case of dense stochastic block model with incomplete observation of the network, where entries of the adjacency matrix are observed uniformly at random. Building on their results, Proposition \ref{prp:variational} can be extended to the case of missing observations.

Once we have estimated the community labels, we can derive an estimator of the matrix of connection probabilities. For all $(a,b) \in \{1,..,k\}$, define 
$$n_{ab}(\widehat{z}^{VAR})  = \left\{
    \begin{array}{ll}
        \vert (\widehat{z}^{VAR}) ^{-1}(a)\vert \times \vert( \widehat{z}^{VAR})^{-1}(b)\vert & \mbox{ if } a \neq b \\
        \vert (\widehat{z}^{VAR}) ^{-1}(a)\vert \times \left(\vert (\widehat{z}^{VAR})^{-1}(a)\vert - 1 \right)  & \mbox{otherwise}
    \end{array}
\right.
$$
and 
$$ \widehat{\bQ}^{ML-VAR}_{ab} \triangleq \frac{\underset{i\in (\widehat{z}^{VAR})^{-1}(a),j\in (\widehat{z}^{VAR}) ^{-1}(b)}{\sum}\bA_{ij}}{n_{ab}(\widehat{z}^{VAR})}.$$
Proposition \ref{prp:variational} implies that with large probability, there exists a permutation $\sigma$ of $\{1, ..., k\}$ such that $\left(\widehat{z}^{VAR}(\sigma(a))\right)_{a \leq k} = \left(\widehat{z}(a)\right)_{a \leq k}$. When this hold, $\left(\widehat{\bQ}^{ML-VAR}_{\sigma(a), \sigma(b)}\right)_{a,b\leq k} = \left(\widehat{\bQ}_{a,b}\right)_{a,b\leq k}$, and the risk bounds obtained in Theorem \ref{thm_matrix_oracle} for the estimator $\widehat{\bTheta}$ hold for the tractable estimator  $\widehat{\bTheta}^{VAR}$, defined as
$$\widehat{\bTheta}^{VAR}_{i\neq j}  =   \widehat{\bQ}^{ML-VAR}_{\widehat{z}^{VAR}(i), \widehat{z}^{VAR}(j)}, \ \widehat{\bTheta}^{VAR}_{ii} = 0.$$

\begin{prp}\label{prp:boundvariational}
Assume that $(z^*,\bA)$ is generated from a stochastic block model with parameters $(\alpha^*, \bQ^*) \in \mathcal{Q}$ such that $\bQ^0$ has no identical columns and the sparsity inducing sequence $\rho_n$ satisfies $\rho_n\gg \log(n)/n$. Then, under Assumptions \ref{A1} and \ref{A2}, there exists a constant $C_{\bQ^0}>0$ depending on $\bQ^0$ such that 
$$\mathbb{P}\left(\left\Vert  \bTheta^* - \widehat{\bTheta}^{VAR} \right \Vert_2^2 \leq  C_{\bQ^0}\rho_n\left(k^2 + n\log(k)\right) \right) \rightarrow 1$$when $n \rightarrow \infty$.
\end{prp}


\section{Conclusion}\label{section:Ccl}

We have studied the problem of estimating the matrix of connections probabilities for the inhomogeneous random graph model and the graphon model with missing link. We have established a non-asymptotic bound on the risk of the maximum likelihood estimator. In particular, we have shown that, if the entries of the probability matrix decrease at the same rate, our estimator achieves the minimax convergence rate. This result is adaptative to the unknown sampling scheme. Moreover, we show that our estimator can be efficiently approximated using variational methods, and thus used in practice.

\section*{Acknowledgments}

The work of O. Klopp was conducted as part of the project Labex MME-DII (ANR11-LBX-0023-01).
The authors want to thank Catherine Matias and Nicolas Verzelen for 
extremely valuable suggestions and discussions.

\medskip

\noindent The authors declare that there is no conflict of interest.

 \section{Proofs} \label{section:proofs}
 
  The proof of Theorem \ref{thm_matrix_oracle} requires bounding the domain of definition of our estimator away from $0$ and $1$ in order to ensure that the loss function associated with the maximum likelihood estimator is Lipschitz. The Lipschitz constant here is equal to  $\frac{1}{\left(1 - \rho_n \right) \land \gamma_n}$. We balance this term by $\rho_n$ by taking advantage of the sparsity of the graph, which implies, in particular, the low variance of $\bA$. For ease of notations, we will assume that $1 - \rho_n\geq \gamma_n$. This is the case when the graph is sparse, and our results still hold in the dense case if we replace $\gamma_n$ by $\gamma_n \land (1 - \rho_n)$ in our bounds.
 
 \subsection{Proof of Theorem \ref{thm_matrix_oracle}}\label{subsection:proofMainTh}
 In previous works on sparse network estimation, the authors used the definition of the least square estimator to bound the norm of the error of this estimator by the scalar product between this error and a noise term. The main difficulty in their proofs consists in bounding this scalar product. Unfortunately, the definition of the maximum likelihood estimator does not yield such a bound, and these technics cannot be used to control its error. Instead, we make use of more refined peeling arguments, in order to take advantage of the Lipschitz continuity of the loss.
 
 Let $\epsilon_n = C \frac{\rho_n^2}{\gamma_n^2}\left( n\log(k) + k^2 \right)$ for some absolute constant $C$ defined as the maximum of the absolute constants appearing in Lemma \ref{Lemma:esperancePi}, Lemma \ref{Lemma:ProbBeKLPi} and Lemma \ref{Lemma:ProbBlKLX}, and let $\epsilon^0 \triangleq \rho_n\epsilon_n$. We start by considering the following two cases:

\textbf{Case 1}: $\Vert \widetilde\bTheta-\widehat \bTheta\Vert^{2}_{2,\bPi} \leq 2 \epsilon^0$. Then the statement of Theorem \ref{thm_matrix_oracle} follows from Lemma \ref{kl_frobenius}:
\[\Vert \bTheta^*-\widehat \bTheta\Vert^{2}_{2,\bPi} \leq 2\Vert \widetilde\bTheta-\widehat \bTheta\Vert^{2}_{2,\bPi} + 2\Vert \bTheta^*-\widetilde\bTheta\Vert^{2}_{2,\bPi} \leq 4\rho_n \epsilon_n + 16 \rho_n \mathcal{K}_{\bPi}(\bTheta^{*}, \widetilde\bTheta).\]

\textbf{Case 2}: $\Vert \widetilde\bTheta-\widehat \bTheta\Vert^{2}_{2,\bPi} > 2\epsilon^0$. Then $\widehat \bTheta$ belongs to the set
\[\mathcal{S}_{\bPi} = \left \{\bTheta \in \underset{z \in \mathcal{Z}_{n,k}}{\cup}\mathcal{T}_z: \Vert \widetilde\bTheta-\bTheta\Vert^{2}_{2,\bPi} \geq 2 \epsilon^0,\ \Vert \bTheta\Vert_{\infty}\leq \rho_n,\ \underset{i<j}{\min} \{\bTheta_{ij}\} \geq \gamma_n \right. \}\]
 The following lemma helps us bound the error in $\left \Vert \cdot \right \Vert_{\bPi}$-norm by the error in $\left \Vert \cdot \right \Vert_{\bX}$-norm. Its proof departs from previous technics used to study network with missing observations. It allows for non-uniform sampling design, and is one of the main technical contribution of this work.
\begin{lem}\label{EqNPiNX}
There exists and absolute constant $C>0$ such that for all $\bTheta \in \mathcal{S}_{\bPi}$ simultaneously we have
\[
\left \vert \left \Vert \bTheta - \widetilde \bTheta \right \Vert_{2, \bPi}^2 - \left \Vert  \bTheta - \widetilde \bTheta \right \Vert_{2, \bX}^2 \right \vert \leq \frac{1}{2} \left \Vert  \bTheta - \widetilde \bTheta \right \Vert_{2, \bPi}^2 
\]
with probability greater than $1 - 2\exp(-Cn \log(k))$.
\end{lem}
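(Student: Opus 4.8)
The lemma is a uniform concentration statement over the sampling matrix $\bX$; the matrices $\widetilde\bTheta$, $\bPi$ and hence the set $\mathcal{S}_{\bPi}$ may be treated as deterministic (conditioning on $\bPi$ if it is random). Note that the KL--projection entries $\widetilde\bTheta_{ij}=\bQ^*_{z^*(i)z^*(j)}$ are $\bPi$-weighted averages of entries of $\bTheta^*$, hence also lie in $[\gamma_n,\rho_n]$. Setting
\[
Z_\bTheta \;:=\; \bigl\|\bTheta-\widetilde\bTheta\bigr\|_{2,\bPi}^2-\bigl\|\bTheta-\widetilde\bTheta\bigr\|_{2,\bX}^2 \;=\; \sum_{i<j}\bigl(\bPi_{ij}-\bX_{ij}\bigr)\bigl(\bTheta_{ij}-\widetilde\bTheta_{ij}\bigr)^2,
\]
the claim is that $|Z_\bTheta|\le\tfrac12\|\bTheta-\widetilde\bTheta\|_{2,\bPi}^2$ for every $\bTheta\in\mathcal{S}_{\bPi}$ simultaneously.

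The first step is the pointwise bound. For fixed $\bTheta$, $Z_\bTheta$ is a centered sum of independent variables; since $\bTheta_{ij},\widetilde\bTheta_{ij}\in[\gamma_n,\rho_n]$ each summand is bounded by $\rho_n^2$ in absolute value and the variances satisfy $\sum_{i<j}\bPi_{ij}(1-\bPi_{ij})(\bTheta_{ij}-\widetilde\bTheta_{ij})^4\le\rho_n^2\|\bTheta-\widetilde\bTheta\|_{2,\bPi}^2$. Bernstein's inequality with deviation $t=\tfrac12\|\bTheta-\widetilde\bTheta\|_{2,\bPi}^2$ then gives $\P\!\left(|Z_\bTheta|\ge t\right)\le 2\exp\!\left(-c\,\|\bTheta-\widetilde\bTheta\|_{2,\bPi}^2/\rho_n^2\right)$ for an absolute $c>0$. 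The sparsity enters here twice — in the $\rho_n^2$ bound on the increments and in the variance proxy $\rho_n^2\|\cdot\|_{2,\bPi}^2$ — which is exactly what produces the extra factor $\rho_n$ that the bound can afford.

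The second step upgrades this to a uniform statement over $\mathcal{S}_{\bPi}$. I would peel over the dyadic value of $v:=\|\bTheta-\widetilde\bTheta\|_{2,\bPi}^2$, which on $\mathcal{S}_{\bPi}$ lies in $[2\epsilon^0,\rho_n^2n^2]$, giving $O(\log n)$ slices; since larger $v$ only improves Step~1, it suffices to win on each slice. On a slice, after a union bound over the $k^n$ label functions $z\in\cZ_{n,k}$, the only remaining freedom is the block-value matrix $\bQ\in[\gamma_n,\rho_n]^{k\times k}_{\rm sym}$, of Euclidean dimension $\binom{k+1}{2}$. The clean way to handle it is to observe that both $v$ and $Z_\bTheta$ split over the at most $\binom{k+1}{2}$ index blocks, $Z_\bTheta=\sum_{a\le b}\Psi_{ab}(\bQ_{ab})$ and $\|\bTheta-\widetilde\bTheta\|_{2,\bPi}^2=\sum_{a\le b}\Phi_{ab}(\bQ_{ab})$ with $\Phi_{ab}\ge0$ and $\Psi_{ab},\Phi_{ab}$ quadratic in a single variable, so that bounding $\sup_\bQ\{\,Z_\bTheta-\tfrac14 v\,\}$ reduces to bounding $\sum_{a\le b}\sup_{q\in[\gamma_n,\rho_n]}\{\Psi_{ab}(q)-\tfrac14\Phi_{ab}(q)\}$, a sum of $\binom{k+1}{2}$ \emph{independent} one-dimensional suprema (each attained at an endpoint or the critical point of a quadratic, hence a simple function of a handful of per-block partial sums of the $\bX_{ij}$), and symmetrically for $-Z_\bTheta$; controlling the expectation and fluctuations of such a sum by another Bernstein-type argument, then combining with the $k^n$ label functions and the $O(\log n)$ slices, produces a union over $\exp\!\big(O(n\log k)+O(k^2)\big)$ events, each failing with probability $\le 2\exp(-c\,2^j\epsilon^0/\rho_n^2)$. (A brute-force $\varepsilon$-net over $\bQ$ together with the Lipschitz transfer $|Z_\bTheta-Z_{\bTheta'}|\le\rho_n n^2\|\bQ-\bQ'\|_\infty$ also works in principle, but leaks an extra $\log n$, which is why the separable structure is preferable.)

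Finally, the calibration: $\epsilon^0=\rho_n\epsilon_n$ with $\epsilon_n=C\rho_n^2(n\log k+k^2)/\gamma_n^2$ is precisely the choice that makes $c\,\epsilon^0/\rho_n^2=cC\rho_n(n\log k+k^2)/\gamma_n^2\ge cC(n\log k+k^2)$ (using $\gamma_n\le\rho_n$) dominate the $n\log k+k^2$ in the exponent of the union bound plus the $O(\log\log n)$ cost of peeling, for $C$ large enough, and still leave a net exponent $\ge Cn\log k$. The main obstacle is exactly this uniformization/calibration step: one must organize the supremum over $\bQ$ so that the $\binom{k+1}{2}$-dimensional sweep costs at most of order $k^2+n\log k$ in the exponent, which is why $\epsilon_n$ has to carry the whole $k^2+n\log k$ rather than just $n\log k$, and why reducing to the per-block separable structure is the right move.
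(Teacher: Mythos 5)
Your outline is a genuinely different route from the paper's. The paper also peels over the dyadic value of $\left\Vert \bTheta-\widetilde\bTheta\right\Vert_{2,\bPi}^2$, but on each slice it controls the supremum $Z_T$ as an empirical process: Bousquet's inequality (Theorem \ref{Bousquet}) reduces $Z_T$ to its expectation, the expectation is linearized by symmetrization and Talagrand's contraction (paying the Lipschitz constant of $x\mapsto x^2/(2\rho_n)$), and the resulting linear supremum is handled in Lemma \ref{BornePdtScalairePi} by a union bound over the $k^n$ labels together with a $\sqrt{T}/2$-net of the $\bPi$-weighted Frobenius ball inside each $\cT_z$ (cardinality $C^{k^2}$, augmented by a $\{-1,0,1\}^{k\times k}$ correction to also control the sup-norm), plus Bernstein on the net. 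Your plan replaces all of this machinery by a pointwise Bernstein bound and a penalized supremum $\sup_{\bQ}\{Z_\bTheta-\tfrac14\Vert\bTheta-\widetilde\bTheta\Vert_{2,\bPi}^2\}$ split block by block; if completed it would be more elementary (no Bousquet, no contraction), and it would in fact make the peeling unnecessary, since the penalized bound is slice-independent. Two remarks on your asides: your preliminary observation that $\widetilde\bTheta_{ij}\in[\gamma_n,\rho_n]$ (as $\bPi$-weighted block averages of $\bTheta^*$) is correct and is indeed needed; but your parenthetical claim that a net over $\bQ$ ``leaks an extra $\log n$'' is not accurate for the net the paper actually uses, which is taken at scale $\sqrt{T}/2$ in the weighted Frobenius norm rather than at a fixed sup-norm resolution, so it costs only $O(k^2)$ in the exponent with no logarithmic loss.

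The one place where your proposal falls short of a proof is exactly the step you label the main obstacle: the uniform control over $q\in[\gamma_n,\rho_n]$ of each per-block quantity $\sup_q\{\Psi_{ab}(q)-\tfrac14\Phi_{ab}(q)\}$ and the concentration of their sum. Saying the supremum is ``attained at an endpoint or the critical point of a quadratic'' does not by itself yield a Bernstein-able statistic, because the critical point is random and the value there is a ratio of random sums. A workable fix is to expand $(q-\widetilde\bTheta_{ij})^2$ around the $\bPi$-weighted mean of $\widetilde\bTheta$ on the block, so that the penalty $\tfrac14\Phi_{ab}(q)$ absorbs, uniformly in $q$, the cross terms coming from Bernstein bounds on the three block sums $\sum(\bPi_{ij}-\bX_{ij})$, $\sum(\bPi_{ij}-\bX_{ij})\widetilde\bTheta_{ij}$, $\sum(\bPi_{ij}-\bX_{ij})\widetilde\bTheta_{ij}^2$, leaving a sub-exponential tail for each block supremum at scale $\rho_n^2$ (with the very large deviation regime patched using the trivial bound $\vert\Psi_{ab}(q)\vert\leq \rho_n^2(\sum\bPi_{ij}+\sum\bX_{ij})$); only then does summing $\binom{k+1}{2}$ independent sub-exponential variables give the required $C\rho_n^2\left(k^2+n\log k+t\right)$ bound that survives the union over the $k^n$ labels and matches the calibration of $\epsilon^0$. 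As written, ``controlling the expectation and fluctuations of such a sum by another Bernstein-type argument'' asserts rather than establishes this, so the decisive uniformization step remains a sketch; your Step 1 and the final calibration are correct.
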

Lemma \ref{EqNPiNX} implies that with large probability, $\widehat{\bTheta}$ belongs to the set $\mathcal{S}_{\bX}$ defined as 
\[\mathcal{S}_{\bX} = \left \{\bTheta \in \underset{z \in \mathcal{Z}_{n,k}}{\cup}\mathcal{T}_z: \Vert \widetilde\bTheta-\bTheta\Vert^{2}_{2,\bX} \geq \epsilon^0,\ \Vert \bTheta\Vert_{\infty}\leq \rho_n,\ \underset{i<j}{\min} \{\bTheta_{ij}\} \geq \gamma_n \right \}.\]
To bound $\Vert \widetilde\bTheta-\widehat \bTheta\Vert^{2}_{2,\bPi}$ when $\widehat{\bTheta} \in \mathcal{S}_{\bX} \cap \mathcal{S}_{\bPi}$, we introduce the following notation. For $\bTheta,\ \bTheta' \in (0,1)^{n \times n}_{sym}$ and $\bB, \bC \in [0,1]^{n \times n}_{sym}$ we set $\Delta \cK^{\bC}_{\bB}(\bTheta, \bTheta') = \mathcal{K}_{\bB}(\bC, \bTheta)-\mathcal{K}_{\bB}(\bC, \bTheta').$ Using Lemma \ref{kl_frobenius} we get
\begin{eqnarray*}
\Vert \bTheta^{*}-\widehat \bTheta\Vert^{2}_{2, \bPi}&\leq& 8\rho_n\mathcal{K}_{\bPi}(\bTheta^{*}, \widehat \bTheta)\\
& \leq & 8\rho_n \mathcal{K}_{\bPi}(\bTheta^{*}, \widetilde \bTheta) + 8\rho_n \Delta \cK^{\bTheta^*}_{\bPi}(\widehat\bTheta, \widetilde\bTheta).
\end{eqnarray*}
On the other hand, the definition of $\widehat\bTheta$ implies that $\Delta \cK^{\bA}_{\bX}(\widehat \bTheta, \widetilde \bTheta) \leq 0$ so 
\begin{eqnarray}\label{case2}
\Vert \bTheta^{*}-\widehat \bTheta\Vert^{2}_{2, \bPi}& \leq& 8\rho_n \mathcal{K}_{\bPi}(\bTheta^{*}, \widetilde \bTheta) + 8\rho_n \Delta \cK^{\bTheta^*}_{\bPi}(\widehat\bTheta, \widetilde\bTheta) - 8\rho_n \Delta \cK^{\bA}_{\bX}(\widehat \bTheta, \widetilde \bTheta) \\
 &\leq& 8\rho_n \left( \mathcal{K}_{\bPi}(\bTheta^{*}, \widetilde \bTheta) + \left( \Delta \cK^{\bTheta^*}_{\bPi}(\widehat\bTheta, \widetilde\bTheta) - \Delta \cK^{\bTheta^*}_{\bX}(\widehat\bTheta, \widetilde\bTheta) \right)+ \left( \Delta \cK^{\bTheta^*}_{\bX}(\widehat\bTheta, \widetilde\bTheta) - \Delta \cK^{\bA}_{\bX}(\widehat \bTheta, \widetilde \bTheta)\right) \right) \nonumber .
\end{eqnarray}
To bound the terms involved in equation \eqref{case2}, we control $\underset{\bTheta \in \mathcal{S}_{\bPi}}{\sup}\left \vert \Delta \cK^{\bTheta^*}_{\bPi}(\bTheta, \widetilde\bTheta) - \Delta \cK^{\bTheta^*}_{\bX}(\bTheta, \widetilde\bTheta) \right \vert $ using the concentration of $\bX$ around its expectation $\bPi$, 
and we control 
$\underset{\bTheta \in \mathcal{S}_{\bX}}{\sup}\left \vert \Delta \cK^{\bTheta^*}_{\bX}(\bTheta, \widetilde\bTheta) - \Delta \cK^{\bA}_{\bX}(\bTheta, \widetilde \bTheta) \right \vert$ conditionally on $\bX$ using the concentration of $\bA$ around its expectation $\bTheta^*$.

\begin{lem}\label{EqKl2KlPi}
There exists absolute constants $C, C'>0$ such that for all $\bTheta \in \mathcal{S}_{\bPi}$ simultaneously we have
\[
\left \vert \Delta \cK^{\bTheta^*}_{\bPi}(\bTheta, \widetilde\bTheta) - \Delta \cK^{\bTheta^*}_{\bX}(\bTheta, \widetilde\bTheta) \right \vert  \leq \frac{1}{2\times32\rho_n}\left \Vert \bTheta - \widetilde \bTheta \right \Vert_{2, \bPi}^2 + C\frac{\rho_n^2}{\gamma_n^2}\left(n\log(k) + k^2 \right)
\]
with probability greater than $1 - 2\exp(-C'\rho_n n\log(k))$.
\end{lem}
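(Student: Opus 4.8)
The plan is to regard the difference on the left as a centered empirical process in $\bX$ and to control its supremum over $\mathcal{S}_{\bPi}$ by a peeling argument combined with a Bernstein/Talagrand-type concentration bound. Since $\bB\mapsto\Delta\cK^{\bTheta^*}_{\bB}(\bTheta,\widetilde\bTheta)$ is linear,
\[
\Delta\cK^{\bTheta^*}_{\bPi}(\bTheta,\widetilde\bTheta)-\Delta\cK^{\bTheta^*}_{\bX}(\bTheta,\widetilde\bTheta)=\sum_{i<j}(\bPi_{ij}-\bX_{ij})\,d_{ij}(\bTheta),\qquad d_{ij}(\bTheta):=\cK(\bTheta^*_{ij},\bTheta_{ij})-\cK(\bTheta^*_{ij},\widetilde\bTheta_{ij});
\]
write $Z(\bTheta)$ for this sum. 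The variables $\bX_{ij}-\bPi_{ij}$ are independent, centered, bounded by $1$ and of variance at most $\bPi_{ij}$. The deterministic input I would isolate is that $q\mapsto\cK(\bTheta^*_{ij},q)$ has derivative $(q-\bTheta^*_{ij})/(q(1-q))$, hence on $[\gamma_n,\rho_n]$ is Lipschitz with constant $\tilde L:=\rho_n/(\gamma_n(1-\rho_n))$; since $\bTheta_{ij},\widetilde\bTheta_{ij},\bTheta^*_{ij}$ all lie in $[\gamma_n,\rho_n]$ this gives $|d_{ij}(\bTheta)-d_{ij}(\bTheta')|\le\tilde L\,|\bTheta_{ij}-\bTheta'_{ij}|$, $d_{ij}(\widetilde\bTheta)=0$, and $|d_{ij}(\bTheta)|\le\tilde L\rho_n$. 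The key point is that, because we compare to the point $\bTheta^*_{ij}\in[\gamma_n,\rho_n]$ (so $|q-\bTheta^*_{ij}|\le\rho_n$), the Lipschitz constant is $\tilde L\asymp\rho_n/\gamma_n$ rather than $1/\gamma_n$; it is this that turns a $\rho_n\gamma_n^{-2}$ bound into the stated $\rho_n^2\gamma_n^{-2}$.

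I would then peel: on $\mathcal{S}_{\bPi}$ one has $2\epsilon^0\le\|\bTheta-\widetilde\bTheta\|^2_{2,\bPi}\le\rho_n^2n^2$, so it suffices to prove that for each $r$ in the geometric grid $\{2^{j+1}\epsilon^0:j\ge0\}$ up to $\rho_n^2n^2$, simultaneously over the slice $\cS_r:=\{\bTheta\in\cup_z\cT_z:\gamma_n\le\bTheta_{ij}\le\rho_n,\ \|\bTheta-\widetilde\bTheta\|^2_{2,\bPi}\le r\}$ one has $\sup_{\cS_r}|Z|\le\tfrac{r}{128\rho_n}+C\rho_n^2\gamma_n^{-2}(n\log k+k^2)$. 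Applied on the slice where $\|\bTheta-\widetilde\bTheta\|^2_{2,\bPi}\in(2^j\epsilon^0,2^{j+1}\epsilon^0]$, this yields $|Z(\bTheta)|\le\tfrac1{2\times32\rho_n}\|\bTheta-\widetilde\bTheta\|^2_{2,\bPi}+C\rho_n^2\gamma_n^{-2}(n\log k+k^2)$, and a union bound over the $O(\log n)$ relevant slices gives the lemma, the extra logarithm being absorbed into the exponent (at worst shrinking $C'$ and requiring $n$ large). On a fixed slice, the increments $Z(\bTheta)-Z(\bTheta')$ are sums of independent centered bounded variables with conditional variance $\sum_{i<j}\bPi_{ij}(d_{ij}(\bTheta)-d_{ij}(\bTheta'))^2\le\tilde L^2\|\bTheta-\bTheta'\|^2_{2,\bPi}$ and uniform bound $\tilde L\rho_n$; thus $Z$ is a Bernstein-type process with respect to the metric $\tilde L\|\cdot\|_{2,\bPi}$, which is precisely the weighting appearing in the constraint and in the target local term. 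The index set $\cS_r=\cup_z\bigl(\cT_z\cap\{\|\cdot-\widetilde\bTheta\|^2_{2,\bPi}\le r\}\bigr)$ has metric entropy $\log N(\delta)\lesssim n\log k+k^2\log(\tilde L\sqrt r/\delta)$ — the $n\log k$ from the $k^n$ labellings $z$, the $k^2\log(\cdot)$ from the $k^2$-dimensional cube of $\bQ$-values inside each labelling. A generic-chaining bound for Bernstein processes (equivalently, symmetrization, the Ledoux--Talagrand contraction removing $d_{ij}$ while keeping the $\bPi$-geometry, Dudley's integral and Talagrand's inequality) gives, with probability at least $1-2\exp(-C'\rho_n(n\log k+k^2))$,
\[
\sup_{\cS_r}|Z|\ \le\ C_1\tilde L\sqrt{r\,(n\log k+k^2)}\ +\ C_2\tilde L\rho_n(n\log k+k^2)\ +\ (\text{deviation terms at level }C'\rho_n(n\log k+k^2)).
\]

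To close, AM--GM turns $\tilde L\sqrt{r(n\log k+k^2)}$ into $\tfrac{r}{128\rho_n}+C\tilde L^2\rho_n(n\log k+k^2)$, and every remaining term is of order $\tilde L\rho_n(n\log k+k^2)$ or $\tilde L^2\rho_n(n\log k+k^2)$ (or of this form times a further $\rho_n$). Both prefactors satisfy $\tilde L^2\rho_n=\rho_n^3/(\gamma_n^2(1-\rho_n)^2)\le C\rho_n^2/\gamma_n^2$ and $\tilde L\rho_n=\rho_n^2/(\gamma_n(1-\rho_n))\le C\rho_n^2/\gamma_n^2$ under the sparsity convention adopted at the start of this section ($\gamma_n\le 1-\rho_n$, $\rho_n$ bounded away from $1$), and also $\rho_n<1$, $\gamma_n<1$. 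This gives the slice bound $\sup_{\cS_r}|Z|\le\tfrac{r}{128\rho_n}+C\rho_n^2\gamma_n^{-2}(n\log k+k^2)$ with probability $\ge1-2\exp(-C'\rho_n(n\log k+k^2))\ge1-2\exp(-C'\rho_n n\log k)$, and the peeling step above finishes the proof.

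The main obstacle I anticipate is the interaction between the combinatorial size of $\cup_z\cT_z$ — the $k^n$ labellings inevitably force an $n\log k$ term into the complexity — and the genuinely sparse concentration scale $\exp(-c\rho_n n\log k)$, which is much weaker than $\exp(-cn\log k)$: a crude union bound over labellings applied to a non-local deviation estimate would be hopelessly lossy. The resolution is to keep everything \emph{local}. Peeling ensures the Bernstein variance on each slice is $\tilde L^2 r$, proportional to the current radius, so after AM--GM it is reabsorbed into $\tfrac1{64\rho_n}\|\bTheta-\widetilde\bTheta\|^2_{2,\bPi}$, and only the entropy-scale term $\rho_n^2\gamma_n^{-2}(n\log k+k^2)$ is left behind. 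One must also carry the $\bPi$-weighting through the chaining so that the increment metric ($\tilde L\|\cdot\|_{2,\bPi}$) matches the constraint, avoiding any $1/\bPi$ blow-up when some sampling probabilities are small. The secondary nuisance is bookkeeping the logarithmic factors produced by covering the $k^2$-dimensional cube of $\bQ$-values within each labelling and by the peeling grid; the first disappears under a genuine chaining (Dudley) argument rather than a single net, and the latter is swept into the exponent.
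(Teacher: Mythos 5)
Your proposal is sound and, at the structural level, it is the same proof as the paper's: the paper also writes the difference as $\sum_{i<j}(\bPi_{ij}-\bX_{ij})\bigl(\cK(\bTheta^*_{ij},\bTheta_{ij})-\cK(\bTheta^*_{ij},\widetilde\bTheta_{ij})\bigr)$, peels over $\Vert\bTheta-\widetilde\bTheta\Vert^2_{2,\bPi}$-slices, applies Bousquet's inequality on each slice, bounds the expectation by symmetrization plus Talagrand contraction (reducing to the linear process $\sum\bepsilon_{ij}\bX_{ij}(\bTheta_{ij}-\widetilde\bTheta_{ij})$), and finishes with the same AM--GM trade-off; your Lipschitz constant $\tilde L\asymp\rho_n/\gamma_n$, obtained from $\partial_q\cK(p,q)=(q-p)/(q(1-q))$, is exactly the gain the paper extracts by weighting the two $1/\gamma_n$-Lipschitz logarithms by $\bTheta^*_{ij}\le\rho_n$ and $1-\bTheta^*_{ij}$ (that is where $\rho_n^2/\gamma_n^2$ rather than $\rho_n/\gamma_n^2$ comes from in both arguments). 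The one genuinely different ingredient is your treatment of the complexity term: you invoke generic chaining / Dudley with Bernstein increments over the union of the $k^n$ labellings, whereas the paper uses a single-scale $\sqrt{T}/2$-net in the $\bPi$-weighted norm, augmented by $3^{k^2}$ sign/clipping patterns to gain $\ell_\infty$ control of the net error and a self-bounding step ($2(\widehat{\bT}^z-\widehat{\bV})\in\mathcal{A}_z(T)$), followed by Bernstein plus a union bound and integration of the tail (its Lemma~\ref{BornePdtScalairePi}). Your route buys a cleaner disposal of the logarithmic factor coming from the net radius, at the price of having to carry both the $\tilde L\Vert\cdot\Vert_{2,\bPi}$ metric and the $\ell_\infty$ scale $\tilde L\rho_n$ through the chaining; the paper's construction does that bookkeeping explicitly.

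One concrete point to repair: in your peeling you fix the per-slice deviation at the base level, obtaining failure probability $2\exp(-C'\rho_n(n\log k+k^2))$ on every slice, and then union bound over the $O(\log n)$ slices, claiming the extra $\log n$ is absorbed into the exponent. Under the standing assumption $\rho_n=\omega(n^{-1})$ alone, $\rho_n n\log k$ may grow more slowly than $\log\log n$, so this absorption is not automatic and your stated probability can become vacuous in that regime. The fix is the one the paper uses implicitly: take the deviation on the slice of radius $T$ proportional to $T/\rho_n$ (which your Bernstein/Bousquet bound delivers with failure probability of order $\exp(-cT\gamma_n^2/\rho_n^2)$), so that the per-slice probabilities decay geometrically in the slice index and their sum is dominated by the first term, giving $2\exp(-C'\rho_n n\log k)$ without any $\log n$ loss. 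With that adjustment, your argument matches the lemma as stated.
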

\begin{lem}\label{ControlKlX}
There exists absolute constants $C,C'>0$ such that conditionally on $\bX$, for any $\bTheta \in \mathcal{S}_{\bX}$ simultaneously we have
\[
\left \vert \Delta \cK^{\bTheta^*}_{\bX}(\bTheta, \widetilde\bTheta) - \Delta \cK^{\bA}_{\bX}(\bTheta, \widetilde \bTheta) \right \vert \leq \frac{1}{4\times 32 \rho_n}\left \Vert \bTheta - \widetilde \bTheta \right \Vert_{2, \bX}^2 + C\frac{\rho_n^2}{\gamma_n^2}\left(n\log(k) + k^2 \right)
\]
with probability greater than $1 - 5\exp(-C' \rho_n n\log(k))$.
\end{lem}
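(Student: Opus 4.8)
Using $\cK(p,q)-\cK(p,q')=p\log(q'/q)+(1-p)\log\bigl((1-q')/(1-q)\bigr)$ with $p\in\{\bTheta^*_{ij},\bA_{ij}\}$, $q=\bTheta_{ij}$, $q'=\widetilde\bTheta_{ij}$, the quantity to be bounded telescopes to
\begin{equation*}
\Delta\cK^{\bTheta^*}_{\bX}(\bTheta,\widetilde\bTheta)-\Delta\cK^{\bA}_{\bX}(\bTheta,\widetilde\bTheta)=\sum_{i<j}\bX_{ij}\bigl(\bTheta^*_{ij}-\bA_{ij}\bigr)\bigl(g(\widetilde\bTheta_{ij})-g(\bTheta_{ij})\bigr)=:Z(\bTheta),\qquad g(t):=\log\tfrac{t}{1-t}.
\end{equation*}
Since $\E[\bA_{ij}]=\bTheta^*_{ij}$, conditionally on $\bX$ the variable $Z(\bTheta)$ is, for each fixed $\bTheta$, a sum of independent centred variables, and $Z(\widetilde\bTheta)=0$. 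Both $\bTheta$ and $\widetilde\bTheta$ take values in $[\gamma_n,\rho_n]$ (for $\widetilde\bTheta$, because each entry of $\bQ^*$ in \eqref{eq:barThetaKL} is a $\bPi$-weighted average of entries of $\bTheta^*\in[\gamma_n,\rho_n]$), on which $g$ is $L_n$-Lipschitz with $L_n\le C/\gamma_n$ under the sparse convention fixed at the start of the proof section. Hence each summand of $Z(\bTheta)$ is bounded by $L_n\rho_n\le C\rho_n/\gamma_n$ and $\sum_{i<j}\bX_{ij}\,\Var(\bA_{ij})\,\bigl(g(\widetilde\bTheta_{ij})-g(\bTheta_{ij})\bigr)^2\le\rho_n L_n^2\Vert\bTheta-\widetilde\bTheta\Vert^2_{2,\bX}$.

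\textbf{Step 2: pointwise Bernstein.} For a fixed $\bTheta$, Bernstein's inequality conditionally on $\bX$ gives, with probability at least $1-2e^{-x}$,
\begin{equation*}
|Z(\bTheta)|\le\sqrt{2\rho_n L_n^2\Vert\bTheta-\widetilde\bTheta\Vert^2_{2,\bX}\,x}+\tfrac13 L_n\rho_n\, x\le\frac{1}{256\rho_n}\Vert\bTheta-\widetilde\bTheta\Vert^2_{2,\bX}+C\frac{\rho_n^2}{\gamma_n^2}\,x,
\end{equation*}
the second inequality by $2ab\le\lambda a^2+\lambda^{-1}b^2$ with $\lambda=(256\rho_n)^{-1}$ and by $\gamma_n\le\rho_n$ (so $L_n\rho_n\le C\rho_n^2/\gamma_n^2$) to absorb the linear term. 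Thus it suffices to show that $x$ may be taken of order $n\log(k)+k^2$ \emph{uniformly over} $\mathcal{S}_{\bX}$ while keeping the failure probability of order $\exp(-C'\rho_n n\log k)$; this is the real content of the Lemma.

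\textbf{Step 3: uniformization by peeling and chaining.} Peel $\mathcal{S}_{\bX}$ into dyadic shells $\Vert\bTheta-\widetilde\bTheta\Vert^2_{2,\bX}\in[T,2T]$, $T=2^{\ell}\epsilon^0$, $\ell\ge0$ (recall $\mathcal{S}_{\bX}$ forces $\Vert\bTheta-\widetilde\bTheta\Vert^2_{2,\bX}\ge\epsilon^0=\rho_n\epsilon_n$). On a shell, bound $\sup|Z(\bTheta)|=\sup|Z(\bTheta)-Z(\widetilde\bTheta)|$ by a mixed-tail chaining argument over $\bigcup_{z\in\cZ_{n,k}}\cT_z$: the increment $Z(\bTheta)-Z(\bTheta')$ is sub-Gaussian at scale $\sqrt{\rho_n}\,L_n\Vert\bTheta-\bTheta'\Vert_{2,\bX}$ and sub-exponential at scale $L_n\Vert\bTheta-\bTheta'\Vert_{\infty}$, and $\bigcup_z\cT_z$ has $\delta$-covering number at most $\exp\!\bigl(n\log k+\binom{k+1}{2}\log(C/\delta)\bigr)$ (choose the label $z$, then net a $\binom{k+1}{2}$-dimensional cube). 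Both entropy integrals converge: the sub-Gaussian (Dudley) part is $\le C\sqrt{\rho_n}\,L_n\sqrt{T}\bigl(\sqrt{n\log k}+k\bigr)$ since $\int_0^r\sqrt{\log(r/\delta)}\,d\delta=O(r)$, and the sub-exponential part is $\le CL_n\rho_n(n\log k+k^2)$ since $\int_0^{\rho_n}\log(\rho_n/\delta)\,d\delta=O(\rho_n)$. Applying $2ab\le\lambda a^2+\lambda^{-1}b^2$ to the first and $\gamma_n\le\rho_n$ to the second, and using $T\le\Vert\bTheta-\widetilde\bTheta\Vert^2_{2,\bX}$, gives $\E\sup|Z|\le\frac{1}{256\rho_n}\Vert\bTheta-\widetilde\bTheta\Vert^2_{2,\bX}+C\frac{\rho_n^2}{\gamma_n^2}(n\log k+k^2)$ on the shell; Talagrand's/Bousquet's concentration inequality for suprema of empirical processes (per-summand envelope $L_n\rho_n$, weak variance $\le2\rho_n L_n^2 T$) upgrades this to a high-probability bound, with the final constant in front of $\Vert\bTheta-\widetilde\bTheta\Vert^2_{2,\bX}$ at most $\frac{1}{128\rho_n}=\frac{1}{4\times32\,\rho_n}$ and failure probability at most $\exp\!\bigl(-c\rho_n(n\log k+k^2)\bigr)\le\exp(-c\rho_n n\log k)$ — this is precisely why $\epsilon_n$ is chosen proportional to $\rho_n^2\gamma_n^{-2}(n\log k+k^2)$. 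Summing over the $O(\log n)$ shells (a geometric series) and adding the auxiliary estimate $\sum_{i<j}\bX_{ij}(\bTheta^*_{ij}-\bA_{ij})^2\le C\rho_n n^2$ (one more Bernstein application, the ``low variance of $\bA$'' used inside the chaining) yields the Lemma with probability at least $1-5\exp(-C'\rho_n n\log k)$.

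\textbf{Main obstacle.} The crux is Step 3 — producing the \emph{clean} error $C\rho_n^2\gamma_n^{-2}(n\log k+k^2)$, in particular the $k^2$ summand with no parasitic $\log n$. A single-norm $\epsilon$-net over $\mathcal{S}_{\bX}$ does not suffice: a label function $z$ may create a block containing few observed pairs, on which a block-constant direction has $\Vert\cdot\Vert_{\infty}$ much larger than $\Vert\cdot\Vert_{2,\bX}$, so a net fine enough in $\Vert\cdot\Vert_{\infty}$ to tame the sub-exponential increments costs an extra $k^2\log n$ in the entropy. What rescues the rate is that in a two-metric chaining the $k$-dependence is carried by the sub-Gaussian metric $\sqrt{\rho_n}\,L_n\Vert\cdot\Vert_{2,\bX}$ through the convergent integral $\int_0^r\sqrt{\log(r/\delta)}\,d\delta=O(r)$ (rather than $r\sqrt{\log(\cdot)}$), while the heavier $\Vert\cdot\Vert_{\infty}$-part enters only additively, via $\int\log(1/\delta)\,d\delta$, and is dominated by $\rho_n^2\gamma_n^{-2}(n\log k+k^2)$ thanks to $\gamma_n\le\rho_n$. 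The remaining bookkeeping — tracking the constant $\frac{1}{128\rho_n}$ (needed so that, combined with Lemmas~\ref{EqNPiNX} and~\ref{EqKl2KlPi} in \eqref{case2}, the coefficient of $\Vert\bTheta^*-\widehat\bTheta\Vert^2_{2,\bPi}$ stays below $1$; the factor $2$ in the per-shell slack $\frac{1}{256\rho_n}$ is the price of the dyadic peeling) and conditioning on $\bX$ throughout — is routine.
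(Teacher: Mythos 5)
Your argument is correct in substance but follows a genuinely different route from the paper's for the crucial uniform-deviation step. The paper also peels and splits into ``expectation plus concentration'', but it bounds the expectation by symmetrization (Lemma~\ref{Lemma:symmetrization}) and Talagrand's contraction (Lemma~\ref{Lemma:contraction}), reducing to the \emph{linear} Rademacher process $\sum \bX_{ij}\bepsilon_{ij}\bA_{ij}(\widetilde\bTheta_{ij}-\bTheta_{ij})$, and then handles the two-norm difficulty you identify not by chaining but by a single $\sqrt{T}/2$-net per label augmented with the $\{-1,0,1\}^{k\times k}$ ``snapping'' construction $\bV^{\bU}$ (Lemma~\ref{BornePdtScalairePi}), which makes one net simultaneously fine in $\Vert\cdot\Vert_{2,\bX}$ and bounded by $\rho_n/2$ in $\Vert\cdot\Vert_\infty$, so a single Bernstein bound with a union over the $k^n$ labels and the $e^{Ck^2}$-size net gives exactly $\rho_n^2\gamma_n^{-2}(n\log k+k^2)$ with no parasitic $\log n$; the concentration step is done with Talagrand's inequality for convex Lipschitz functions (Theorem~\ref{Talagrand}), the Lipschitz constant being $\sqrt{2T}/\gamma_n$, rather than your Bousquet step (Lemmas~\ref{Lemma:ProbBlKLX}--\ref{Lemma:esperanceKLX}). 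Your two-metric chaining on the original centred process is a legitimate alternative: it avoids symmetrization and contraction altogether, at the price of invoking a mixed-tail (Bernstein-increment) generic-chaining bound, and it isolates correctly why the $k^2$ term stays clean ($\int\sqrt{\log}$ vs.\ $\int\log$), which is indeed the same crux the paper's snapping trick resolves in one step. Two small points: the auxiliary estimate $\sum_{i<j}\bX_{ij}(\bA_{ij}-\bTheta^*_{ij})^2\leq C\rho_n n^2$ is not needed in your scheme, since conditionally on $\bX$ the Bernstein variance proxy is bounded deterministically by $\rho_n L_n^2\Vert\bTheta-\bTheta'\Vert_{2,\bX}^2$; and your peeling bookkeeping should let the deviation level grow with the shell (e.g.\ proportionally to $\gamma_n^2T/\rho_n^2$, as in the paper's Lemma~\ref{Lemma:ProbBlKLX}) rather than use a shell-independent level with a union over $O(\log n)$ shells, because under the sole assumption $\rho_n=\omega(n^{-1})$ the extra $\log n$ factor is not always absorbable into $\exp(-C'\rho_n n\log k)$; with that standard adjustment the geometric summation goes through and your constants ($\tfrac{1}{128\rho_n}$ after the factor $2$ from peeling) match the statement.
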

Combining Lemma \ref{EqNPiNX}, Lemma \ref{EqKl2KlPi}, Lemma \ref{ControlKlX} and \eqref{case2} yields that there exists two absolute constants $C,C'>0$ such that with probability greater than $1 - 9\exp\left(-C'\rho_n n \log(k)\right)$
\begin{eqnarray}\label{eqfraise}
\Vert \bTheta^{*}-\widehat \bTheta\Vert^{2}_{2,\bPi} &\leq& 8\rho_n\mathcal{K}_{\bPi}(\bTheta^{*}, \widetilde\bTheta)+ 8 \rho_n\times \frac{1}{2\times 32 \rho_n} \left \Vert \widehat \bTheta - \widetilde \bTheta \right \Vert_{2, \bPi}^2 + 8\rho_n\times \frac{1}{4\times 32 \rho_n} \left \Vert \widehat \bTheta - \widetilde \bTheta \right \Vert_{2, \bX}^2\nonumber\\ &&+ C\rho_n \frac{\rho_n^2}{\gamma_n ^2} \left(n\log(k) + k^2\right)\nonumber\\
&\leq & 8\rho_n\mathcal{K}_{\bPi}(\bTheta^{*}, \widetilde\bTheta)+ \frac{1}{8} \left \Vert \widehat \bTheta - \widetilde \bTheta \right \Vert_{2, \bPi}^2 + \frac{1}{16} \times \frac{3}{2}\left \Vert \widehat \bTheta - \widetilde \bTheta \right \Vert_{2, \bPi}^2 + C\rho_n \frac{\rho_n^2}{\gamma_n^2} \left(n\log(k) + k^2\right)\nonumber\\
&\leq& 8\rho_n\mathcal{K}_{\bPi}(\bTheta^{*}, \widetilde\bTheta)+ \frac{1}{2} \left \Vert \bTheta^* - \widetilde \bTheta \right \Vert_{2, \bPi}^2 + \frac{1}{2} \left \Vert \bTheta^* - \widehat\bTheta \right \Vert_{2, \bPi}^2 + C\rho_n \frac{\rho_n^2}{\gamma_n^2} \left(n\log(k) + k^2\right).
\end{eqnarray}
Lemma \ref{kl_frobenius} and \eqref{eqfraise} imply that there exists two absolute constants $C, C'>0$ such that with probability larger than $1 - 9\exp\left(-C'\rho_n n \log(k)\right)$,
\begin{eqnarray*}
\frac{1}{2}\Vert \bTheta^{*}-\widehat \bTheta\Vert^{2}_{2,\bPi} &\leq& 8\rho_n\mathcal{K}_{\bPi}(\bTheta^{*}, \widetilde\bTheta)+ \frac{1}{2} \times 8\rho_n\mathcal{K}_{\bPi}(\bTheta^{*}, \widetilde\bTheta) + C\rho_n \frac{\rho_n^2}{\gamma_n^2} \left(n\log(k) + k^2\right).
\end{eqnarray*}
 This completes the proof of Theorem \ref{thm_matrix_oracle}.

\subsection{Proof of Lemma \ref{EqNPiNX}}

 To prove Lemma \ref{EqNPiNX}, we show that the probability of the following ``bad" event is small:
\[\mathcal{E} \triangleq \left\{ \exists \bTheta \in \mathcal{S}_{\bPi}: \left \vert \left \Vert \bTheta - \widetilde \bTheta \right \Vert_{2, \bPi}^2 - \left \Vert  \bTheta - \widetilde \bTheta \right \Vert_{2, \bX}^2 \right \vert > \frac{1}{2} \left \Vert\bTheta - \widetilde \bTheta \right \Vert_{2, \bPi}^2\right\}.\]
We use a standard peeling argument (see, e.g., \cite{KloppLowRank}): we slice $\mathcal{S}_{\bPi}$ in different sets, on which we control $\left \Vert \bTheta - \widetilde \bTheta \right \Vert_{2,\bPi}^2$. Recall that $\epsilon_n \triangleq C\frac{\rho_n^2}{\gamma_n^2}\left(n\log(k) + k^2 \right)$ where the absolute constant $C$ is larger than the constant appearing in Lemma \ref{Lemma:esperancePi}, and that $\epsilon^0 \triangleq \rho_n\epsilon_n$. For $l \in \mathbb{N}^*$, we set 
\[\mathcal{S}_{l,\bPi} \triangleq \left \{\bTheta \in \mathcal{S}_{\bPi}:  2^{l-1}(2\epsilon^0) \leq \left \Vert \bTheta - \widetilde\bTheta \right \Vert_{2,\bPi}^2 \leq 2^l(2\epsilon^0)\right \}.\]
If the event $\mathcal{E}$ holds, there exists $l \in \mathbb{N}^*$ such that $\bTheta \in \mathcal{S}_{l,\bPi}$ and 
\begin{equation*}
\begin{split}
\left \vert \left \Vert \bTheta - \widetilde \bTheta \right \Vert_{2, \bPi}^2 - \left \Vert  \bTheta - \widetilde \bTheta \right \Vert_{2, \bX}^2 \right \vert > \frac{1}{2} \left \Vert\bTheta - \widetilde \bTheta \right \Vert_{2, \bPi}^2.
\end{split}
\end{equation*}
Note that $\mathbb{E} \left[\left \Vert  \bTheta - \widetilde \bTheta \right \Vert_{2, \bX}^2\right] =\left \Vert \bTheta - \widetilde \bTheta \right \Vert_{2, \bPi}^2$. The events that we need to control are the following:
\[\mathcal{E}_{l} \triangleq \left\{ \exists \bTheta \in \mathcal{S}_{l,\bPi}: \left \vert \left \Vert  \bTheta - \widetilde \bTheta \right \Vert_{2, \bX}^2 - \mathbb{E} \left[\left \Vert  \bTheta - \widetilde \bTheta \right \Vert_{2, \bX}^2\right]\right \vert > \frac{2^{l-1}(2\epsilon^0)}{2} \right\}.\]
If $\mathcal{E}$ holds for some $\bTheta \in \mathcal{S}_{\bPi}$, there exists $l \in \mathbb{N}^*$ such that $\bTheta \in \mathcal{S}_{l,\bPi}$, thus there exists $l \in \mathbb{N}^*$ such that $\mathcal{E}_{l}$ holds, i.e., $\mathcal{E} \subset \underset{l \in \mathbb{N}^*}{\cup} \mathcal{E}_{l,\bPi}$. For $T>0$, let $\mathcal{S}_{\bPi}(T)$ be defined as follows:
\[\mathcal{S}_{\bPi}(T) = \left \{ \bTheta \in \underset{z \in \mathcal{Z}_{n,k}}{\cup}\mathcal{T}_z : \ \Vert\bTheta\Vert_{\infty}\leq \rho_n,\ \underset{i<j}{\min} \{\bTheta_{ij}\} \geq \gamma_n, \left \Vert \bTheta - \widetilde\bTheta \right \Vert_{2,\bPi}^2 \leq T \right\}.\]
We see that $\mathcal{S}_{l,\bPi} \subset \mathcal{S}_{\bPi}(2^l \epsilon^0)$, so we only need to control the probability of the events
\[\mathcal{E}(T) = \left\{ \exists \bTheta \in \mathcal{S}_{\bPi}(T): \left \vert \left \Vert  \bTheta - \widetilde \bTheta \right \Vert_{2, \bX}^2 - \mathbb{E} \left[\left \Vert  \bTheta - \widetilde \bTheta \right \Vert_{2, \bX}^2\right]\right \vert > \frac{T}{4}\right\}.\] 
The following lemma helps us bound the probability of the events $\mathcal{E}(T)$.
\begin{lem}\label{Lemma:ProbBeNPi}
 For $T>\epsilon^0$, let $Z_{T}=\underset{\bTheta\in \mathcal{S}_{\bPi}(T)}{\sup} \left \vert \left \Vert \bTheta - \widetilde \bTheta \right \Vert_{2, \bPi}^2 - \left \Vert  \bTheta - \widetilde \bTheta \right \Vert_{2, \bX}^2 \right \vert$. There exists an absolute constant $C>0$ such that
 \begin{equation*}
 \mathbb{P}\left( Z_{T} \geq \frac{T}{4}\right)\leq \exp \left(-\frac{CT}{\rho_n}\right).
 \end{equation*}
 \end{lem}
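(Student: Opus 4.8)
The plan is to prove Lemma~\ref{Lemma:ProbBeNPi} via Bernstein's inequality applied to the sum of independent centered random variables indexed by pairs $i<j$, combined with a union bound over label functions $z \in \cZ_{n,k}$ and over a suitable discretization of the matrix $\bQ$. First I would fix a label function $z$ and a block-constant matrix $\bTheta \in \cT_z$ with $\|\bTheta-\widetilde\bTheta\|_{2,\bPi}^2 \le T$, and write
\[
\left\Vert \bTheta - \widetilde\bTheta \right\Vert_{2,\bPi}^2 - \left\Vert \bTheta - \widetilde\bTheta \right\Vert_{2,\bX}^2 = \sum_{i<j} (\bPi_{ij} - \bX_{ij})(\bTheta_{ij} - \widetilde\bTheta_{ij})^2 .
\]
The summands are independent, centered, bounded in absolute value by $(\bTheta_{ij}-\widetilde\bTheta_{ij})^2 \le (2\rho_n)^2$ (using $\|\bTheta\|_\infty,\|\widetilde\bTheta\|_\infty \le \rho_n$), and have variance bounded by $\bPi_{ij}(\bTheta_{ij}-\widetilde\bTheta_{ij})^4 \le (2\rho_n)^2 \,\bPi_{ij}(\bTheta_{ij}-\widetilde\bTheta_{ij})^2$, so the total variance is at most $4\rho_n^2 \, T$. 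Bernstein's inequality then gives, for a single $\bTheta$,
\[
\P\!\left( \left| \sum_{i<j} (\bPi_{ij}-\bX_{ij})(\bTheta_{ij}-\widetilde\bTheta_{ij})^2 \right| > \frac{T}{8} \right) \le 2\exp\!\left( - C \min\!\left\{ \frac{T^2}{\rho_n^2 T}, \frac{T}{\rho_n^2} \right\} \right) = 2\exp\!\left( -\frac{CT}{\rho_n^2}\right),
\]
which is already stronger than needed by a factor $\rho_n$ in the exponent; the slack is what will absorb the union bound.

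Next I would handle the supremum over $\mathcal{S}_{\bPi}(T)$ by a discretization/covering argument. The number of label functions is $k^n \le \exp(n\log k)$. For each fixed $z$, the set of admissible matrices $\bQ \in [\gamma_n,\rho_n]^{k\times k}_{\rm sym}$ lives in dimension $\binom{k+1}{2} \le k^2$; I would take a $\delta$-net in the sup-norm with $\delta$ a small multiple of $\rho_n / n$ (or $\rho_n^2/(n\rho_n \cdot \text{something})$, chosen so that replacing $\bTheta$ by its net point changes $\|\bTheta-\widetilde\bTheta\|_{2,\bX}^2$ and $\|\bTheta-\widetilde\bTheta\|_{2,\bPi}^2$ by at most $T/16$ each — a Lipschitz estimate since $(\bTheta_{ij}-\widetilde\bTheta_{ij})^2$ is $4\rho_n$-Lipschitz in each coordinate of $\bQ$ and there are at most $n^2$ terms). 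The net has cardinality at most $(C n / \rho_n \cdot \rho_n)^{k^2} = \exp(C k^2 \log n)$, and since we only care about $T$ up to $\rho_n^2 n^2 / \text{poly}$, $\log(1/\delta) = O(\log n)$. A union bound over $z$ and over the net then costs $\exp\big(n\log k + C k^2 \log n\big)$ in probability, which is dominated by the Bernstein exponent $\exp(-CT/\rho_n^2)$ because for $T > \epsilon^0 = C\rho_n^3(n\log k + k^2)/\gamma_n^2 \ge C\rho_n^2(n\log k + k^2)$ (using $1-\rho_n \le \gamma_n \le 1$) we have $T/\rho_n^2 \gg n\log k + k^2 \log n$; after the union bound one retains an exponent of the form $-CT/\rho_n$, matching the claimed $\exp(-CT/\rho_n)$. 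Finally, using the peeling slices $\mathcal{S}_{l,\bPi}$ and summing the geometric series $\sum_{l\ge 1}\exp(-C 2^l \epsilon^0/\rho_n) = \sum_l \exp(-C2^l \epsilon_n)$ recovers Lemma~\ref{EqNPiNX}, but the present lemma is just the single-scale bound $\P(Z_T \ge T/4) \le \exp(-CT/\rho_n)$.

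The main obstacle I anticipate is getting the discretization bookkeeping to close with the right exponent: the net size must contribute only $\exp(C k^2 \log n)$ (not $\exp(Ck^2 \log(n/\rho_n^2))$ with an uncontrolled $\log(1/\rho_n)$), which requires using the constraint $\rho_n = \omega(n^{-1})$ to bound $\log(1/\rho_n) = O(\log n)$, and then verifying that $n\log k + k^2\log n$ is indeed absorbed by $T/\rho_n^2$ for all $T \ge \epsilon^0$ — this is where the specific shape $\epsilon_n \asymp \rho_n^2(n\log k + k^2)/\gamma_n^2$ and the normalization $1-\rho_n \le \gamma_n$ matter, since they provide the extra $1/\gamma_n^2 \ge 1$ (in fact $\ge 1$, and the factor $\rho_n$ of slack from Bernstein versus the $T/\rho_n$ target). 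A secondary care point is that $\widetilde\bTheta$ itself is block-constant for the \emph{same} $k$, so $\bTheta - \widetilde\bTheta$ is block-constant for a common refinement of the two partitions with at most $k^2$ blocks; this keeps the effective dimension at $O(k^2)$ rather than $O(k^4)$ when one later wants a net, though for the variance/boundedness estimates above it is irrelevant since those are done coordinatewise in $(i,j)$.
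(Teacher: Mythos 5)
Your route (pointwise Bernstein for each fixed $\bTheta$, then a union bound over the $k^n$ label functions and a fine sup-norm net on $\bQ$) is genuinely different from the paper's, which first applies Bousquet's inequality to $Z_T$ after normalizing the summands by $\rho_n$ (this alone produces the tail $\exp(-T/(64\rho_n))$), and then bounds $\mathbb{E}[Z_T]$ by symmetrization, Talagrand's contraction principle, and the argument of Lemma~\ref{BornePdtScalairePi}: a covering of the radius-$\sqrt T$ ball at the multiplicative scale $\sqrt T/2$ (Lemma~\ref{Lemma:coveringNumber}, log-cardinality $O(k^2)$), augmented by the sign matrices $\bU\in\{-1,0,1\}^{k\times k}$ to control the sup-norm, and the self-bounding step $2(\widehat{\bT}^z-\widehat{\bV}^{\widehat\bU})\in\mathcal{A}_z(T)$ which shows the supremum is at most twice its value on the finite set. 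The whole point of that machinery is that the entropy cost is only $O(n\log k+k^2)$, with no extra logarithmic factor.

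That is exactly where your argument has a genuine gap. Your sup-norm net of mesh $\delta\asymp T/(n^2\rho_n)$ has log-cardinality of order $k^2\log(n^2\rho_n^2/T)\asymp k^2\log(n\rho_n)$, so after the union bound your exponent is $cT/\rho_n^2-(n\log k+Ck^2\log n)$, and you assert that $T>\epsilon^0$ forces $T/\rho_n^2\gg n\log k+k^2\log n$. That implication does not hold: $T>\epsilon^0=C\rho_n^3(n\log k+k^2)/\gamma_n^2$ only gives $T/\rho_n^2\geq C\rho_n(n\log k+k^2)/\gamma_n^2\geq C(n\log k+k^2)/\rho_n$, and the factor $1/\rho_n$ of slack from Bernstein does not absorb the extra $\log n$ sitting on the $k^2$ term in all regimes allowed by the hypotheses. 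Concretely, take $\gamma_n\asymp\rho_n$, $k\asymp n$, and $\rho_n=(\log n)^{-1/2}$ (which satisfies $\rho_n=\omega(n^{-1})$): for $T\asymp\epsilon^0$ one gets $T/\rho_n^2\asymp n^2(\log n)^{1/2}$ while the union-bound cost is $\asymp n^2\log n$, so your bound is vacuous precisely at the scales $T=2^l(2\epsilon^0)$ with small $l$ that the peeling in Lemma~\ref{EqNPiNX} requires. Your pointwise Bernstein step and the variance bound $\lesssim\rho_n^2T$ are fine, and your bookkeeping does close when, say, $k\lesssim\sqrt n$ or $\rho_n\lesssim1/\log n$; but to prove the lemma as stated you would need either a chaining/multi-scale net or the paper's ratio-scale net with the factor-2 self-bounding trick (or the Bousquet-plus-expectation reduction), so that the net contributes $O(k^2)$ rather than $O(k^2\log n)$.
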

 \begin{proof}
 To prove Lemma \ref{Lemma:ProbBeNPi}, we first show that $Z_T$ concentrates around its expectation and then bound this term. 
\begin{lem}\label{ConcentrationNPi}
Let $Z_T$ be defined as in \ref{Lemma:ProbBeNPi}. Then
 $$\mathbb{P} \left(Z_T> 2\mathbb{E}[Z_T] + \frac{T}{16} \right) \leq \exp\left(-\frac{T}{64\rho_n}\right).$$
\end{lem}
 \begin{lem}\label{Lemma:esperancePi}
Let $Z_{T}$ be as in Lemma \ref{Lemma:ProbBeNPi}, then there exists an absolute constant $C>0$ such that 
\begin{align*}
\mathbb{E}\left[Z_{T}\right] \leq \frac{T}{16} + C\rho_n \frac{\rho_n^2}{\gamma_n^2}\left(n\log(k) + k^2 \right).
\end{align*}
 \end{lem}
 Putting together Lemma \ref{ConcentrationNPi} and Lemma \ref{Lemma:esperancePi}, we get that there exists an absolute constant $C>0$ such that 
\begin{align*}
\mathbb{P} \left( Z_{T} \geq \frac{3T}{16}+ \frac{C}{8}\rho_n \frac{\rho_n^2}{\gamma_n^2}\left(n\log(k) + k^2 \right)\right) \leq \exp\left(-\frac{T}{64\rho_n}\right).
\end{align*}
Our choice of $\epsilon_0$ allows us to conclude that for $T \geq 2\epsilon_0$, $\frac{C}{8}\rho_n \frac{\rho_n^2}{\gamma_n^2}\left(n\log(k) + k^2 \right) \leq \frac{T}{16}$ and
\begin{align*}
\mathbb{P} \left( Z_{T} \geq \frac{T}{4}\right) \leq \exp(-\frac{T}{64 \rho_n}).
\end{align*}\end{proof}
For this choice of $\epsilon_0$,
 \begin{equation*}
 \begin{split}
\mathbb P\left (\mathcal{E}\right )&\leq \underset{l=1}{\overset{\infty}{\Sigma}}\mathbb P\left (\mathcal{E}(2^l(2 \epsilon^0))\right )\\
 &\leq \underset{l=1}{\overset{\infty}{\Sigma}} \exp\left(-2C\epsilon^0 2^l/\rho_n \right)\\
 &\leq \underset{l=1}{\overset{\infty}{\Sigma}} \exp\left(-2Cl\log(2)\epsilon^0/\rho_n \right)\\
  &\leq \frac{\exp\left(-2C\log(2)\epsilon^0/\rho_n \right)}{1-\exp\left(-2C\log(2)\epsilon^0/\rho_n \right)} = \frac{1}{\exp\left(-2C\log(2)\epsilon^0/\rho_n \right) - 1} \leq 2 \exp\left(-C n\log(k) \right)
 \end{split}
 \end{equation*} 
 for $n$ large enough. This completes the proof of Lemma \ref{EqNPiNX}.
 
\subsubsection{Proof of Lemma \ref{ConcentrationNPi}}

To control the deviation of $Z_T$ from its expectation, we apply the following theorem from Bousquet, as stated in \cite{gine_nickl_2015}, Theorem 3.3.16.
\begin{thm}[Bousquet] \label{Bousquet}
Let $X_i$, $i \in \mathbb{N}$ be independent $\mathcal{S}$-valued random variables, and let $\cF$ be a countable class of functions $f = (f_1,..., f_n): \mathcal{S} \rightarrow [-1,1]^n$ such that $\mathbb{E}[f_i(X_i)] = 0$ for any $f \in \cF$ and $i \in [n]$. Set
$Z = \underset{f \in \cF}{\sup} \left \vert \underset{1 \leq i \leq n}{\sum} f_i(X_i) \right \vert$ and $v = \underset{f \in \cF}{\sup} \underset{1\leq i \leq n}{\sum} \mathbb{E} \left[f_i(X_i)^2\right]$. Then, for any $x>0$,
$$\mathbb{P} \left(Z > \mathbb{E}[Z] + \frac{x}{3} + \sqrt{2x(2\mathbb{E}[Z] + v)} \right) \leq \exp(-x).$$
\end{thm}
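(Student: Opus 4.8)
The statement is Bousquet's inequality, a Bennett-type concentration bound for the supremum of a centered empirical process; I would prove it by the entropy method (tensorization of entropy together with Herbst's argument), following \cite{giné_nickl_2015}. Two routine reductions come first. Since $\sup_{f\in\cF}\big|\sum_{i} f_i(X_i)\big|=\sup_{f\in\cF\cup(-\cF)}\sum_{i} f_i(X_i)$ and $\cF\cup(-\cF)$ is again a class of coordinatewise centered $[-1,1]^n$-valued maps with the same $v$, we may drop the absolute value and take $Z=\sup_{f\in\cF}\sum_i f_i(X_i)$. Approximating $\cF$ from inside by finite subclasses (dominated convergence for $\mathbb E Z$ and for the Laplace transform, then passing to the limit in the tail bound) we may assume $\cF$ finite, so a maximizer $\hat f=\hat f(X_1,\dots,X_n)$ of $\sum_i f_i(X_i)$ exists. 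It then suffices to show that $Z$ is sub-gamma with variance factor $\nu:=v+2\mathbb E Z$ and scale parameter $1/3$, i.e.\ $\psi(\lambda):=\log\mathbb E\big[e^{\lambda(Z-\mathbb E Z)}\big]\le \nu\lambda^{2}/\big(2(1-\lambda/3)\big)$ for $0\le\lambda<3$; the asserted tail then follows from $\mathbb P(Z-\mathbb E Z\ge t)\le\exp(-\lambda t+\psi(\lambda))$, optimized over $\lambda\in(0,3)$, which is the textbook sub-gamma computation and produces exactly the threshold $\mathbb E Z + x/3 + \sqrt{2x\nu}$.

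For the sub-gamma bound I would run Herbst's argument: $\lambda\psi'(\lambda)-\psi(\lambda)=\operatorname{Ent}(e^{\lambda Z})/\mathbb E[e^{\lambda Z}]$, so everything comes down to an upper bound on $\operatorname{Ent}(e^{\lambda Z})$. Introduce the leave-one-out suprema $Z_i:=\sup_{f\in\cF}\sum_{j\ne i}f_j(X_j)$, which are $\sigma((X_j)_{j\ne i})$-measurable. Comparing the global maximizer $\hat f$ with a maximizer $g^{(i)}$ of $Z_i$ gives the two-sided bound $g^{(i)}_i(X_i)\le Z-Z_i\le \hat f_i(X_i)$, hence $|Z-Z_i|\le 1$, and in particular $(Z-Z_i)^2\le \hat f_i(X_i)^{2}\mathbf 1\{Z\ge Z_i\}+g^{(i)}_i(X_i)^{2}\mathbf 1\{Z<Z_i\}$. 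Sub-additivity of entropy over the product law, combined with the one-variable estimate $\operatorname{Ent}_i(e^{\lambda Z})\le \mathbb E_i[e^{\lambda Z}\phi(-\lambda(Z-Z_i))]$ (valid for any $\sigma((X_j)_{j\ne i})$-measurable $Z_i$, where $\phi(u):=e^{u}-u-1$), yields the modified logarithmic Sobolev inequality $\operatorname{Ent}(e^{\lambda Z})\le \sum_{i=1}^{n}\mathbb E[e^{\lambda Z}\phi(-\lambda(Z-Z_i))]$. Using $\phi(v)\le v^{2}/\big(2(1-v/3)\big)$ on $[0,3)$ and $\phi(-v)\le v^{2}/2$ for $v\ge 0$, together with $|Z-Z_i|\le1$, one obtains
\[
\operatorname{Ent}(e^{\lambda Z})\ \le\ \frac{\lambda^{2}}{2(1-\lambda/3)}\sum_{i=1}^{n}\mathbb E\big[e^{\lambda Z}(Z-Z_i)^{2}\big].
\]

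The crux is then to show $\sum_{i}\mathbb E[e^{\lambda Z}(Z-Z_i)^{2}]\le \nu\,\mathbb E[e^{\lambda Z}]$, where $\mathbb E Z$ inside $\nu$ is allowed to be the tilted mean $\mathbb E[Ze^{\lambda Z}]/\mathbb E[e^{\lambda Z}]=\mathbb E Z+\psi'(\lambda)$. Plugging in the pathwise bound above, the first piece is $\mathbb E\big[e^{\lambda Z}\sum_i\hat f_i(X_i)^2\big]$, and since $\hat f$ is a single index, $\sum_i\hat f_i(X_i)^2\le \sup_{f}\sum_i f_i(X_i)^{2}$; the difficulty is that this supremum is a \emph{random} quantity, and converting it into the \emph{deterministic} $v=\sup_f\sum_i\mathbb E[f_i(X_i)^2]$ is where the centering $\mathbb E[f_i(X_i)]=0$ must be exploited, with the fluctuation of $\sum_i f_i(X_i)^2$ about $\sum_i\mathbb E[f_i(X_i)^2]$ absorbed into the correction $2\mathbb E Z$ (this is exactly why Bousquet's variance factor is $v+2\mathbb E Z$ and not $v$); the second piece, involving $g^{(i)}_i(X_i)^2\mathbf 1\{Z<Z_i\}$, is treated similarly, using that each $g^{(i)}$ is a single index independent of $X_i$ so that $\mathbb E[g^{(i)}_i(X_i)^2\mid (X_j)_{j\ne i}]=\mathbb E[f_i(X_i)^{2}]|_{f=g^{(i)}}$. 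Feeding the resulting bound back into Herbst's identity gives a differential inequality for $\psi$ which, with $\psi(0)=\psi'(0)=0$, integrates to $\psi(\lambda)\le \nu\lambda^{2}/\big(2(1-\lambda/3)\big)$ on $[0,3)$; combined with the first paragraph this proves the theorem. I expect this variance-bookkeeping step — extracting precisely the factor $v+2\mathbb E Z$ with scale $1/3$ — to be by far the most technical part, while the reductions, the modified log-Sobolev inequality, and the final tail optimization are standard.
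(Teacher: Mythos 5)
First, note that the paper does not prove this statement: Theorem \ref{Bousquet} is quoted verbatim from \cite{giné_nickl_2015} (Theorem 3.3.16) and used as a black box, so there is no in-paper proof to compare against; what follows assesses your sketch on its own terms.

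Your outline is the correct and standard route to Bousquet's inequality: symmetrize the class to remove the absolute value, reduce to finite $\cF$, apply tensorization of entropy with the leave-one-out suprema $Z_i$ and the elementary bound $\operatorname{Ent}_i(e^{\lambda Z})\le \E_i\left[e^{\lambda Z}\phi(-\lambda(Z-Z_i))\right]$ (which does hold for arbitrary $X^{(i)}$-measurable $Z_i$), together with the two-sided comparison $g^{(i)}_i(X_i)\le Z-Z_i\le \hat f_i(X_i)$; all of these steps are sound as written, as is the final sub-gamma-to-tail conversion. The genuine gap is exactly where you place it, but it is larger than ``variance bookkeeping'': the inequality $\sum_i\E\left[e^{\lambda Z}(Z-Z_i)^2\right]\le(v+2\E Z)\,\E\left[e^{\lambda Z}\right]$ (or its tilted version) is not a routine consequence of the centering --- it is essentially the whole content of Bousquet's theorem. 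The naive way to execute your plan, namely bounding $\sum_i \hat f_i(X_i)^2$ by $\sup_f\sum_i f_i(X_i)^2$ and controlling the expectation of that random supremum by symmetrization and the contraction principle, yields a variance proxy of the form $v+c\,\E Z$ with $c$ strictly larger than $2$ (a Talagrand/Massart-type constant), not the stated bound. Bousquet's argument instead works directly under the tilted measure, establishes an auxiliary inequality for the maximizer's contribution in terms of $\psi'(\lambda)$, and closes a differential inequality for $\psi$ whose variance factor is $v+2(\E Z+\psi'(\lambda))$; integrating that ODE down to the clean bound $\psi(\lambda)\le(v+2\E Z)\lambda^2/\left(2(1-\lambda/3)\right)$ requires a separate comparison lemma, not merely $\psi(0)=\psi'(0)=0$. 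You gesture at both points parenthetically, but neither is carried out, so the proposal stands as a correct roadmap whose decisive step is missing rather than as a proof.
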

We apply Theorem \ref{Bousquet} to the random variable 
\begin{eqnarray*}
Z_{T} &=&\underset{\bTheta \in \mathcal{S}_{\bPi}(T)}{\sup} \left \vert \left \Vert \bTheta - \widetilde \bTheta \right \Vert_{2, \bPi}^2 - \left \Vert  \bTheta - \widetilde \bTheta \right \Vert_{2, \bX}^2 \right \vert\\
&=&\underset{\bTheta \in \mathcal{S}_{\bPi}(T)}{\sup} \left \vert \sum_{1 \leq i < j \leq n} \left(\bPi_{ij} - \bX_{ij}\right)\left( \bTheta_{ij} - \widetilde \bTheta_{ij} \right)^2 \right \vert\\
&=& \rho_n \underset{\bTheta\in \mathcal{S}_{\bPi}(T)}{\sup} \left \vert \sum_{1 \leq i < j \leq n} f^{\bTheta}_{ij}(\bX_{ij}) \right \vert
\end{eqnarray*}
where we set $f^{\bTheta}_{ij}(\bX_{ij}) \triangleq \frac{\left(\bX_{ij} - \bPi_{ij}\right)\left(\bTheta_{ij} - \widetilde \bTheta_{ij} \right)^2}{\rho_n}$. The set of functions $\left\{f^{\bTheta}_{ij}, \bTheta \in \mathcal{S}_{\bPi}(T) \right\}$ is separable and we can apply Theorem \ref{Bousquet} (see, e.g., \cite{gine_nickl_2015}, Section 2.1). Note that for any $1\leq i < j \leq n$, $\mathbb{E}\left[f^{\bTheta}_{ij}(\bX_{ij})\right] = 0$, $\left\vert f^{\bTheta}_{ij}(\bX_{ij})\right\vert \leq 1$, $\mathbb{E} \left[\left(\bX_{ij} - \bPi_{ij}\right)^2 \right] \leq \bPi_{ij}$ and $\left \vert\bTheta_{ij} - \widetilde \bTheta_{ij} \right \vert \leq \rho_n$ so 
\begin{eqnarray*}
\underset{\bTheta \in \mathcal{S}_{\bPi}(T)}{\sup} \underset{1\leq i<j \leq n}{\sum} \mathbb{E} \left[f^{\bTheta}_{ij}(X_{ij})^2\right] &\leq& \frac{1}{\rho_n^2}\underset{\bTheta \in \mathcal{S}_{\bPi}(T)}{\sup} \underset{1\leq i<j \leq n}{\sum}\bPi_{ij}\left(\bTheta_{ij} - \widetilde \bTheta_{ij} \right)^4\\
&\leq& \underset{\bTheta \in \mathcal{S}_{\bPi}(T)}{\sup} \underset{1\leq i<j \leq n}{\sum}\bPi_{ij}\left(\bTheta_{ij} - \widetilde \bTheta_{ij} \right)^2\\ 
&\leq& T.
\end{eqnarray*}
Theorem \ref{Bousquet} implies that 
\begin{eqnarray*}
\mathbb{P} \left(\frac{Z_T}{\rho_n} > \frac{1}{\rho_n}\mathbb{E}[Z_T] + \frac{x}{3} + \sqrt{2x\left(\frac{2}{\rho_n}\mathbb{E}[Z_T] + T\right)} \right) \leq \exp(-x) \\
\mathbb{P} \left(Z_T> \mathbb{E}[Z_T] + \frac{\rho_n x}{3} + \sqrt{2x\rho_n (2\mathbb{E}[Z_T] + \rho_nT)} \right) \leq \exp(-x)\\
\mathbb{P} \left(Z_T> \mathbb{E}[Z_T] + \frac{\rho_n x}{3} + 2\rho_nx + \mathbb{E}[Z_T] + \rho_n\sqrt{2xT} \right) \leq \exp(-x)
\end{eqnarray*}
where we have used $\sqrt{a+b} \leq \sqrt{a} + \sqrt{b}$ and $2\sqrt{ab} \leq a + b$. Setting $x = \frac{T}{64\rho_n}$ and noticing that $\rho_n \leq 1$ leads to 
\begin{eqnarray*}
\mathbb{P} \left(Z_T> 2\mathbb{E}[Z_T] + \frac{T}{16} \right) \leq \exp(-\frac{T}{64\rho_n}).
\end{eqnarray*}

\subsubsection{Proof of Lemma \ref{Lemma:esperancePi}}

Once we have bounded $Z_T$ by its expectation, we bound $\mathbb{E} \left[ Z_T \right]$. To do so, we use a symetrization argument and Talagrand's contraction principle (see, e.g., \cite{gine_nickl_2015} for a proof).
\begin{lem}[Symmetrization]\label{Lemma:symmetrization}
Let $\{\bY_i\}_{1 \leq i \leq n}$ be independent random variables, $\{\bepsilon_i\}_{1 \leq i \leq n}$ be a Rademacher sequence, and $\mathcal{A}$ be a subset of $\mathbb{R}^n$, then
\begin{align*}
\mathbb{E} \left[\underset{\bA \in \mathcal{A}}{\sup}\left \vert \sum_{1\leq i\leq n} (\bY_i - \mathbb{E}[\bY_i])\bA_i\right \vert \right] & \leq 2 \mathbb{E} \left[ \underset{\bA \in \mathcal{A}}{\sup} \left \vert \sum_{1\leq i\leq n} \bepsilon_i \bY_i \bA_i\right \vert \right].
\end{align*}
 \end{lem}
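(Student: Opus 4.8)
The plan is to use the classical \emph{ghost sample} (doubling) argument. I would first enlarge the probability space to carry an independent copy $\{\bY_i'\}_{1\le i\le n}$ of $\{\bY_i\}_{1\le i\le n}$, independent also of the Rademacher sequence $\{\bepsilon_i\}_{1\le i\le n}$, and write $\mathbb{E}'$ for the expectation in the primed variables alone. Since $\mathbb{E}[\bY_i]=\mathbb{E}'[\bY_i']$ and $\bu\mapsto\sup_{\bA\in\cA}\big|\sum_i\bu_i\bA_i\big|$ is convex, the conditional Jensen inequality gives
\begin{align*}
\mathbb{E}\Big[\sup_{\bA\in\cA}\Big|\sum_i(\bY_i-\mathbb{E}[\bY_i])\bA_i\Big|\Big]
=\mathbb{E}\Big[\sup_{\bA\in\cA}\Big|\mathbb{E}'\Big[\sum_i(\bY_i-\bY_i')\bA_i\Big]\Big|\Big]
\le\mathbb{E}\,\mathbb{E}'\Big[\sup_{\bA\in\cA}\Big|\sum_i(\bY_i-\bY_i')\bA_i\Big|\Big].
\end{align*}

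Next I would exploit symmetry. For each $i$ the variable $\bY_i-\bY_i'$ is symmetric, these are independent across $i$, and $\{\bepsilon_i\}$ is independent of $(\{\bY_i\},\{\bY_i'\})$; hence the vector $(\bepsilon_i(\bY_i-\bY_i'))_i$ has the same joint law as $(\bY_i-\bY_i')_i$, so the right-hand side above is unchanged after inserting the signs $\bepsilon_i$. A triangle inequality then splits $\bepsilon_i(\bY_i-\bY_i')=\bepsilon_i\bY_i-\bepsilon_i\bY_i'$ inside the supremum, and since $\{\bY_i'\}\stackrel{d}{=}\{\bY_i\}$ with $\{\bepsilon_i\}$ independent of both, the two resulting terms are equal, which produces the factor $2$:
\begin{align*}
\mathbb{E}\,\mathbb{E}'\Big[\sup_{\bA\in\cA}\Big|\sum_i(\bY_i-\bY_i')\bA_i\Big|\Big]
=\mathbb{E}\Big[\sup_{\bA\in\cA}\Big|\sum_i\bepsilon_i(\bY_i-\bY_i')\bA_i\Big|\Big]
\le2\,\mathbb{E}\Big[\sup_{\bA\in\cA}\Big|\sum_i\bepsilon_i\bY_i\bA_i\Big|\Big].
\end{align*}

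The only delicate points are measurability of the suprema and the applications of Fubini and of conditional Jensen. In our setting these are harmless: $\cA$ may be taken countable (the relevant function classes are separable, as noted around Theorem~\ref{Bousquet}), and for a general $\cA\subset\mathbb{R}^n$ one reduces to finite subsets by monotone convergence beforehand. I expect the main---entirely routine---care to be in tracking the joint independence of the ghost sample and the Rademacher signs, so that the distributional identity $(\bepsilon_i(\bY_i-\bY_i'))_i\stackrel{d}{=}(\bY_i-\bY_i')_i$ is legitimate; granting that, the estimate is just Jensen, symmetry, and a triangle inequality applied in sequence.
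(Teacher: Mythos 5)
Your ghost-sample argument (independent copy, conditional Jensen, insertion of Rademacher signs by symmetry, then the triangle inequality to obtain the factor $2$) is correct, and the measurability/separability caveats you flag are handled appropriately. The paper itself does not prove this lemma but cites Gin\'e and Nickl, and the proof given there is exactly this standard symmetrization argument, so your proposal matches the intended proof.
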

\begin{lem}[Talagrand's contraction principle]\label{Lemma:contraction}
 Let $\{\phi_i\}_{1\leq i \leq n}: \mathbb{R} \rightarrow \mathbb{R}$ be $1$-Lipshitz functions vanishing at $0$, $\mathcal{A}$ be a compact subset of $\mathbb{R}^n$ and $\{\bepsilon_i\}_{1\leq i \leq n}$ be a Rademacher sequence, then
$$\mathbb{E} \left[\underset{\bTheta \in \mathcal{A}}{\sup} \left \vert \underset{1 \leq i \leq n}{\sum} \bepsilon_i\phi_i\left(\bTheta_i\right)\right\vert\right] \leq  2\mathbb{E}\left[\underset{\bTheta \in \mathcal{A}}{\sup} \left \vert \underset{1 \leq i \leq n}{\sum} \bepsilon_i\bTheta_i\right\vert\right] .$$
 \end{lem}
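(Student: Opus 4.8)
I write $t=(t_1,\dots,t_n)$ for a generic element of $\mathcal{A}\subseteq\mathbb{R}^n$ and $\varepsilon_1,\dots,\varepsilon_n$ for the Rademacher sequence. The plan is to deduce the stated inequality (which carries an absolute value and the constant $2$) from a sharper one-sided version without either feature, namely: for every bounded $T\subseteq\mathbb{R}^n$ and every family of $1$-Lipschitz maps $\psi_i$ with $\psi_i(0)=0$,
\[
\mathbb{E}\Bigl[\sup_{t\in T}\sum_{i=1}^{n}\varepsilon_i\psi_i(t_i)\Bigr]\ \le\ \mathbb{E}\Bigl[\sup_{t\in T}\sum_{i=1}^{n}\varepsilon_i t_i\Bigr].
\]
I would prove this one-sided bound by replacing the $\psi_i$ by the identity one coordinate at a time, and then recover the absolute-value version by a short argument based on adjoining the origin to $\mathcal{A}$.

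\textbf{Step 1: coordinatewise linearisation.} It suffices to prove, for one fixed coordinate $j$ and an \emph{arbitrary} function $h:\mathbb{R}^n\to\mathbb{R}$ (which will absorb the contributions of the coordinates not currently being treated), the single-coordinate inequality $\mathbb{E}\bigl[\sup_{t\in T}(\varepsilon_j\psi_j(t_j)+h(t))\bigr]\le\mathbb{E}\bigl[\sup_{t\in T}(\varepsilon_j t_j+h(t))\bigr]$; applying this successively for $j=1,\dots,n$, each time folding an already-linearised coordinate into $h$, yields the one-sided bound. To prove the single-coordinate inequality, condition on all Rademachers other than $\varepsilon_j$ and average over $\varepsilon_j\in\{-1,1\}$: the left side becomes $\tfrac12\sup_{t,t'\in T}\bigl(\psi_j(t_j)-\psi_j(t'_j)+h(t)+h(t')\bigr)$ and the right side becomes $\tfrac12\sup_{t,t'\in T}\bigl((t_j-t'_j)+h(t)+h(t')\bigr)$. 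Since $\psi_j(t_j)-\psi_j(t'_j)\le|t_j-t'_j|$, and since the expression $|t_j-t'_j|+h(t)+h(t')$ is symmetric under swapping $t\leftrightarrow t'$ and therefore has the same supremum over $T\times T$ as $(t_j-t'_j)+h(t)+h(t')$, the former supremum is bounded by the latter; integrating back over the remaining Rademachers completes the coordinate step.

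\textbf{Step 2: from one-sided to absolute value.} Enlarge $\mathcal{A}$ to $\mathcal{A}'=\mathcal{A}\cup\{0\}$. This only increases $\mathbb{E}\sup_{t}\bigl|\sum_i\varepsilon_i\phi_i(t_i)\bigr|$, while leaving $\mathbb{E}\sup_{t}\bigl|\sum_i\varepsilon_i t_i\bigr|$ unchanged (the origin contributes $0$). Because $\phi_i(0)=0$, on $\mathcal{A}'$ both $\sup_{t}\sum_i\varepsilon_i\phi_i(t_i)$ and $\sup_{t}\sum_i\varepsilon_i(-\phi_i(t_i))$ are nonnegative (each dominates its value at $t=0$, which is $0$), so $\sup_{t\in\mathcal{A}'}\bigl|\sum_i\varepsilon_i\phi_i(t_i)\bigr|$ — being the larger of these two nonnegative numbers — is at most their sum. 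Taking expectations and applying Step 1 once to the contractions $(\phi_i)$ and once to $(-\phi_i)$ (both $1$-Lipschitz and vanishing at $0$) bounds this by $2\,\mathbb{E}\sup_{t\in\mathcal{A}'}\sum_i\varepsilon_i t_i\le 2\,\mathbb{E}\sup_{t\in\mathcal{A}'}\bigl|\sum_i\varepsilon_i t_i\bigr|=2\,\mathbb{E}\sup_{t\in\mathcal{A}}\bigl|\sum_i\varepsilon_i t_i\bigr|$, which is the desired inequality. Compactness of $\mathcal{A}$ is used only to guarantee that the various suprema (in particular the supremum over pairs in Step 1) are attained; otherwise one argues with approximate maximisers.

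\textbf{Main obstacle.} The delicate point is the single-coordinate inequality in Step 1 — specifically the manoeuvre that trades the Lipschitz increment $\psi_j(t_j)-\psi_j(t'_j)$ for $|t_j-t'_j|$ and then discards the absolute value by invoking the symmetry of the supremum over ordered pairs $(t,t')$. Once that identity is in place, the remaining ingredients — the induction bookkeeping over coordinates, the enlargement of $\mathcal{A}$ by the origin, and the trivial passage $\sum_i\varepsilon_i t_i\le\bigl|\sum_i\varepsilon_i t_i\bigr|$ — are routine.
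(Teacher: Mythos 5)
Your proof is correct: the coordinatewise linearisation with the symmetric-pair argument, followed by adjoining the origin and applying the one-sided bound to $\phi_i$ and $-\phi_i$ to obtain the factor $2$, is exactly the classical proof of the contraction principle. The paper itself does not prove this lemma but cites Gin\'e and Nickl (2015), and your argument coincides with the standard textbook proof given there (and in Ledoux--Talagrand), so there is nothing to reconcile.
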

 Recall that
\begin{eqnarray*}
\mathbb{E}\left[Z_{T}\right] &=& \mathbb{E} \left[\underset{\bTheta \in \mathcal{S}_{\bPi}(T)}{\sup} \left \vert \sum_{1 \leq i < j \leq n} \left(\bPi_{ij} - \bX_{ij}\right)\left( \bTheta_{ij} - \widetilde \bTheta_{ij} \right)^2 \right \vert\right].
\end{eqnarray*}
Let $(\bepsilon_{ij})_{1 \leq i<j\leq n}$ be a Rademacher sequence. Lemma  \ref{Lemma:symmetrization} implies
\begin{eqnarray*}
\mathbb{E}\left[Z_{T}\right] &\leq & 2 \mathbb{E} \left[\underset{\bTheta \in \mathcal{S}_{\bPi}(T)}{\sup} \left \vert \sum_{1 \leq i < j \leq n} \bepsilon_{ij}\bX_{ij}\left( \bTheta_{ij} - \widetilde \bTheta_{ij} \right)^2 \right \vert\right].
\end{eqnarray*}
For any $1 \leq i < j \leq n$, let $\phi_{ij}: x \rightarrow \frac{x^2}{2\rho_n}$. Note that on $[-\rho_n, \rho_n]$, $\phi_{ij}$ is a $1$-Lipschitz and vanishes at $0$. Applying Lemma \ref{Lemma:contraction}, we get that
\begin{eqnarray}\label{ici}
\mathbb{E}\left[Z_{T}\right] &\leq & 4\rho_n \mathbb{E} \left[\underset{\bTheta \in \mathcal{S}_{\bPi}(T)}{\sup} \left \vert \sum_{1 \leq i < j \leq n} \bepsilon_{ij}\phi_{ij}\left(\bX_{ij}\left( \bTheta_{ij} - \widetilde \bTheta_{ij}\right) \right) \right \vert\right]\nonumber\\
&\leq & 8\rho_n \mathbb{E} \left[\underset{\bTheta \in \mathcal{S}_{\bPi}(T)}{\sup} \left \vert \sum_{1 \leq i < j \leq n} \bepsilon_{ij}\bX_{ij}\left( \bTheta_{ij} - \widetilde \bTheta_{ij} \right) \right \vert\right]. \label{eqPdtSalairePi}
\end{eqnarray}
We bound the term in \label{eqPdtSalairePi} using the following lemma.
\begin{lem}\label{BornePdtScalairePi}
Let $\bB \in \{\bPi, \bX\}$ and let $\bSigma$ be a random matrix such that almost surely, $\left \Vert \bSigma \right \Vert_{\infty} \leq 1 $ and that conditionally on $\bB$, for any $1\leq i < j \leq n$ the coefficients $\bSigma_{ij}$ are independent and centered. Assume that there exists $\alpha>0$ such that for any $\bTheta \in \mathbb{R}^{n \times n}_{sym}$, $\underset{i<j}{\sum}\mathbb{E}^{\bB} \left[\bSigma_{ij}^2\bTheta_{ij}^2\right] \leq \alpha \left \Vert \bTheta \right \Vert_{2,\bB}^2.$ There exists an absolute constant $C$ such that
$$ \mathbb{E}^{\bB} \left[\underset{\bTheta \in \mathcal{S}_{\bB}(T)}{\sup} \left \vert \sum_{1 \leq i < j \leq n} \bSigma_{ij}\left( \bTheta_{ij} - \widetilde \bTheta_{ij} \right) \right \vert\right] \leq \frac{\gamma_n\alpha T}{32\times 64^2\rho_n^2} + C\frac{\rho_n^2}{\gamma_n}(n\log(k) + k^2).$$
\end{lem}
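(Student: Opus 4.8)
The plan is to condition on $\bB$ throughout (so every expectation and probability below is $\mathbb{E}^{\bB}$, $\mathbb{P}^{\bB}$), to stratify the index set $\mathcal{S}_{\bB}(T)$ according to the label function $z\in\cZ_{n,k}$, to bound the supremum over each stratum \emph{in expectation} by a projection/trace argument that exploits the fact that a single stratum sits inside a linear space of dimension $O(k^2)$, and finally to glue the $k^n$ strata together with the Bousquet inequality (Theorem \ref{Bousquet}) so that the $n\log(k)$ contribution enters under a square root. A useful preliminary observation is that the variance hypothesis, applied to a matrix $\bTheta$ with a single nonzero entry, forces $\bSigma_{ij}=0$ whenever $\bB_{ij}=0$; hence $\langle\bSigma\,\vert\,\bTheta-\widetilde\bTheta\rangle=\sum_{i<j}\bSigma_{ij}(\bTheta_{ij}-\widetilde\bTheta_{ij})$ only sees the support of $\bB$, and we may work with the genuine inner product $\langle X,Y\rangle_{\bB}=\sum_{\bB_{ij}>0}\bB_{ij}X_{ij}Y_{ij}$ on that support.

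First, fix $z$ and set $\mathcal{S}^z_{\bB}(T)=\mathcal{S}_{\bB}(T)\cap\mathcal{T}_z$. Since $\bTheta\in\mathcal{T}_z$ lies in the linear span $\mathcal{T}^{\mathrm{lin}}_z$ obtained by letting $\bQ$ range over $\mathbb{R}^{k\times k}_{\rm sym}$, and $\widetilde\bTheta\in\mathcal{T}^{\mathrm{lin}}_{z^*}$, every difference $\bTheta-\widetilde\bTheta$ with $\bTheta\in\mathcal{S}^z_{\bB}(T)$ belongs to the linear space $V_z:=\mathcal{T}^{\mathrm{lin}}_z+\mathcal{T}^{\mathrm{lin}}_{z^*}$, whose dimension is at most $2\binom{k+1}{2}\le 2k^2$. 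Writing $\langle\bSigma\,\vert\,\bTheta-\widetilde\bTheta\rangle$ as an inner product in the $\bB$-geometry, applying Cauchy--Schwarz with the constraint $\|\bTheta-\widetilde\bTheta\|_{2,\bB}^2\le T$, and bounding the $\bB$-norm of the $\bB$-orthogonal projection $\Pi_{V_z}$ of $\bSigma$ (viewed in that geometry) onto $V_z$: for a $\bB$-orthonormal basis $(v_p)$ of $V_z$, conditional independence and centering of the $\bSigma_{ij}$ together with the hypothesis $\sum_{i<j}\mathbb{E}^{\bB}[\bSigma_{ij}^2(v_p)_{ij}^2]\le\alpha\|v_p\|_{2,\bB}^2=\alpha$ give $\mathbb{E}^{\bB}\|\Pi_{V_z}\bSigma\|_{2,\bB}^2\le\alpha\dim V_z\le 2\alpha k^2$. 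By Jensen this yields
\begin{align*}
\mu_z:=\mathbb{E}^{\bB}\Bigl[\sup_{\bTheta\in\mathcal{S}^z_{\bB}(T)}\bigl|\langle\bSigma\,\vert\,\bTheta-\widetilde\bTheta\rangle\bigr|\Bigr]\le k\sqrt{2\alpha T}.
\end{align*}

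Next I would observe that, for fixed $z$, the variable $Z_z:=\sup_{\bTheta\in\mathcal{S}^z_{\bB}(T)}|\langle\bSigma\,\vert\,\bTheta-\widetilde\bTheta\rangle|$ is the supremum of an empirical process with conditionally centered summands $\bSigma_{ij}(\bTheta_{ij}-\widetilde\bTheta_{ij})$, bounded in absolute value by $\|\bSigma\|_\infty\|\bTheta-\widetilde\bTheta\|_\infty\le\rho_n$ (using $\|\bSigma\|_\infty\le1$, $\bTheta\in[\gamma_n,\rho_n]$, $\widetilde\bTheta\in[0,1]$ and the standing assumption $1-\rho_n\le\gamma_n$), with conditional variances summing to at most $\alpha T$ on $\mathcal{S}^z_{\bB}(T)$. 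Rescaling by $\rho_n^{-1}$ and applying Bousquet's inequality exactly as in the proof of Lemma \ref{ConcentrationNPi} gives, for absolute constants, $\mathbb{P}^{\bB}\bigl(Z_z>C(\mu_z+\rho_n x+\sqrt{\alpha T x})\bigr)\le e^{-x}$ for all $x>0$. Choosing $x=n\log(k)+u$, taking a union bound over the $k^n$ label functions, integrating over $u\ge0$, and inserting $\max_z\mu_z\le k\sqrt{2\alpha T}$ from the previous step yields
\begin{align*}
\mathbb{E}^{\bB}\Bigl[\sup_{\bTheta\in\mathcal{S}_{\bB}(T)}|\langle\bSigma\,\vert\,\bTheta-\widetilde\bTheta\rangle|\Bigr]=\mathbb{E}^{\bB}\bigl[\max_z Z_z\bigr]\le C'\Bigl(\sqrt{\alpha T\,(k^2+n\log(k))}+\rho_n n\log(k)\Bigr).
\end{align*}
Finally, applying $\sqrt{ab}\le\lambda a+\frac{b}{4\lambda}$ with $a=T$, $b=(C')^2\alpha(k^2+n\log(k))$ and $\lambda=\frac{\gamma_n\alpha}{32\cdot 64^2\rho_n^2}$, and using $\gamma_n\le\rho_n$ (from \eqref{cond_bounds}) to bound the residual term $\rho_n n\log(k)\le\frac{\rho_n^2}{\gamma_n}(n\log(k)+k^2)$, produces the claimed inequality after renaming the constant.

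I expect two delicate points. The first is the dimension bookkeeping in the fixed-$z$ step: one must use that $\bTheta-\widetilde\bTheta$ lives in the \emph{sum} of two $k$-block spaces, of dimension $O(k^2)$, rather than in the space of matrices constant on the common refinement of $z$ and $z^*$, which has dimension $O(k^4)$ and would yield the wrong rate. The second, and the main obstacle, is the gluing step: to obtain a term $\sqrt{\alpha T\,n\log(k)}$ rather than the much larger $\sqrt{\alpha T}\cdot n\log(k)$, the sub-Gaussian part of Bousquet's bound must be the one controlling the union over the $k^n$ labellings in the relevant regime, which requires care with the cross terms $\rho_n\mathbb{E}^{\bB}[Z_z]$ appearing in Bousquet's variance proxy and with absorbing the lower-order $\rho_n n\log(k)$ contribution into the final bound.
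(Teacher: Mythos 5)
Your proposal is correct, and it reaches the stated bound by a genuinely different route from the paper. The paper first splits $\bTheta-\widetilde\bTheta$ through the $\bB$-projection $\widetilde\bTheta^z$ of the oracle onto $\mathcal{T}_z$, controls the resulting ``label'' term by Bernstein plus a union bound over the $k^n$ assignments, and controls the within-block term by building an explicit $\sqrt{T}/2$-net of the ball in the $\tfrac{k(k-1)}{2}$-dimensional space $\mathcal{T}_z$, augmented by the $\bV^{\bU}$ construction precisely so that the net elements also approximate in sup-norm (needed because Bernstein is applied pointwise on the net); you instead bound the whole in-stratum supremum in expectation by Cauchy--Schwarz in the $\bB$-geometry against the projection of $\bSigma$ onto the sum space $\mathcal{T}^{\mathrm{lin}}_z+\mathcal{T}^{\mathrm{lin}}_{z^*}$, whose dimension $O(k^2)$ and the second-moment hypothesis give $\mu_z\leq k\sqrt{2\alpha T}$ with no net and no sup-norm bookkeeping, and you then glue the strata with Bousquet's inequality (the paper's Theorem~\ref{Bousquet}) and a union bound over labels, which is where the envelope $\rho_n$ and the $n\log(k)$ enter. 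Your two flagged delicate points are exactly the right ones and are handled correctly: using the sum of the two block spaces (dimension $O(k^2)$, not the $O(k^4)$ common refinement), and absorbing the $\rho_n\mathbb{E}^{\bB}[Z_z]$ term in Bousquet's variance proxy so that the union over labels contributes $\sqrt{\alpha T\,n\log(k)}+\rho_n n\log(k)$, after which the same $\sqrt{ab}\leq\lambda a+b/(4\lambda)$ split and $\gamma_n\leq\rho_n$ give the claimed constants. Two minor technicalities you implicitly rely on and should state: the passage to the weighted inner product uses that $\bSigma_{ij}=0$ a.s.\ on $\{\bB_{ij}=0\}$ (which you correctly derive from the variance hypothesis with a single-entry test matrix), and when $\bB$ has zero entries the $\bB$-``norm'' is only a seminorm, so the projection argument should be run on the coordinates in the support of $\bB$ (or in the quotient), which only decreases the relevant dimension. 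The net effect is a cleaner within-stratum step than the paper's, at the price of invoking a Talagrand-type concentration per stratum rather than a single Bernstein bound over labels and net points.
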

Note that for any $1 \leq i < j \leq n$, $\mathbb{E}[\bX_{ij}^2] \leq \bPi_{ij}$, so for any $\bTheta \in \mathbb{R}^{n \times n}_{sym}$, $\underset{i<j}{\sum}\mathbb{E} \left[\bepsilon_{ij}^2\bX_{ij}^2\bTheta_{ij}^2\right] \leq \left \Vert \bTheta \right \Vert_{2,\bPi}^2.$ We apply Lemma \ref{BornePdtScalairePi} with $\bB = \bPi$, $\alpha = 1$ and for any $1 \leq i <j \leq n$, $\bSigma_{ij} = \bepsilon_{ij}\bX_{ij}$ and combine it with \eqref{ici} to get that for some absolute constant $C$
\begin{eqnarray*}
\mathbb{E}\left[Z_{T}\right] &\leq & \frac{T\gamma_n\times 8 \rho_n}{32\times 64^2 \rho_n^2} + C\rho_n \frac{\rho_n^2}{\gamma_n^2}\left(n\log(k) + k^2 \right)\\
&\leq & \frac{T}{4\times64} + C\rho_n \frac{\rho_n^2}{\gamma_n }\left(n\log(k) + k^2 \right).
\end{eqnarray*}
This concludes the proof of Lemma \ref{Lemma:esperancePi}.

\subsubsection{Proof of Lemma \ref{BornePdtScalairePi}}

To get an upper bound on $\mathbb{E}^{\bB} \left[\underset{\bTheta \in \mathcal{S}_{\bB}(T)}{\sup} \left \vert \sum_{1 \leq i < j \leq n} \bSigma_{ij}\left( \bTheta_{ij} - \widetilde \bTheta_{ij} \right) \right \vert\right]$, we use Bernstein's inequality, which we state here for the reader's convenience: 
\begin{thm}[Bernstein's inequality]\label{thm:Bernstein}
Let $X_1, ..., X_n$ be independent centered random variables. Assume that for any $i \in [n]$,  $\vert X_i\vert \leq M$ almost surely, then
\begin{equation*}
\mathbb{P}\left(\left \vert \sum_{1 \leq i \leq n}X_i \right \vert \geq  \sqrt{2t\sum_{1 \leq i \leq n}\mathbb{E}[X_i^2]} + \frac{2M}{3}t\right) \leq 2e^{-t}.
\end{equation*}
\end{thm}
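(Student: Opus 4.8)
The plan is to prove Theorem~\ref{thm:Bernstein} by the classical exponential-moment (Chernoff) method, and then to invert the resulting one-sided tail bound into the additive form stated. Throughout write $v_i=\mathbb{E}[X_i^2]$ and $v=\sum_{i=1}^n v_i$; if $v=0$ all the $X_i$ vanish almost surely and the claim is trivial, so assume $v>0$.

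First I would bound the moment generating function of each summand. Since $\mathbb{E}[X_i]=0$, expanding the exponential gives $\mathbb{E}[e^{\lambda X_i}]=1+\sum_{m\geq 2}\lambda^m\mathbb{E}[X_i^m]/m!$; using $|\mathbb{E}[X_i^m]|\leq M^{m-2}v_i$ for $m\geq 2$ together with $m!\geq 2\cdot 3^{m-2}$ to sum the ensuing geometric series, one obtains for every $\lambda$ with $0\leq\lambda<3/M$
\[
\mathbb{E}[e^{\lambda X_i}]\;\leq\;1+\frac{\lambda^2 v_i}{2(1-\lambda M/3)}\;\leq\;\exp\!\Big(\frac{\lambda^2 v_i}{2(1-\lambda M/3)}\Big).
\]
By independence, $\mathbb{E}\big[\exp(\lambda\sum_i X_i)\big]\leq\exp\!\big(\lambda^2 v/(2(1-\lambda M/3))\big)$, so Markov's inequality yields, for all $u>0$ and all such $\lambda$,
\[
\mathbb{P}\Big(\sum_i X_i\geq u\Big)\;\leq\;\exp\!\Big(-\lambda u+\frac{\lambda^2 v}{2(1-\lambda M/3)}\Big).
\]

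Next I would optimise the exponent over $\lambda$: the choice $\lambda=u/(v+Mu/3)\in[0,3/M)$ collapses the bound to the familiar Bernstein tail $\mathbb{P}(\sum_i X_i\geq u)\leq\exp(-u^2/(2(v+Mu/3)))$. It remains to invert this: setting the exponent equal to $-t$ means solving $u^2=2t(v+Mu/3)$, whose positive root is $u_\star=\tfrac{Mt}{3}+\sqrt{\tfrac{M^2t^2}{9}+2tv}\leq\tfrac{2Mt}{3}+\sqrt{2tv}$ by $\sqrt{a+b}\leq\sqrt a+\sqrt b$. Since $u\mapsto u^2/(2(v+Mu/3))$ is increasing on $(0,\infty)$, the value $\tfrac{2M}{3}t+\sqrt{2tv}\geq u_\star$ satisfies $\mathbb{P}(\sum_i X_i\geq\tfrac{2M}{3}t+\sqrt{2tv})\leq e^{-t}$. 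Applying the same argument to $(-X_i)_i$ and taking a union bound gives the two-sided inequality with the factor $2$.

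The argument is entirely standard; the only steps needing care are fixing the constant in the sub-exponential MGF estimate (the bound $m!\geq 2\cdot 3^{m-2}$ for $m\geq 2$, which is what produces the $1-\lambda M/3$ in the denominator and hence the $\tfrac{2M}{3}t$ term at the end), and the algebra of the final inversion so that the deviation appears in the clean additive shape $\sqrt{2t\sum_i\mathbb{E}[X_i^2]}+\tfrac{2M}{3}t$ rather than as an implicit quadratic in $u$. This split form --- one part scaling like $\sqrt{\text{variance}}$ and one linear in the uniform bound $M$ --- is precisely what is used downstream in the proof of Lemma~\ref{BornePdtScalairePi}.
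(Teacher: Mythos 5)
Your proof is correct, and it is the standard Chernoff/moment-generating-function derivation of Bernstein's inequality; the paper itself states Theorem~\ref{thm:Bernstein} without proof, as a classical result quoted for the reader's convenience, so there is no in-paper argument to diverge from. All the steps check out: the moment bound $\vert\mathbb{E}[X_i^m]\vert\leq M^{m-2}\mathbb{E}[X_i^2]$, the series estimate via $m!\geq 2\cdot 3^{m-2}$, the optimization $\lambda=u/(v+Mu/3)$ giving the tail $\exp\left(-u^2/\left(2(v+Mu/3)\right)\right)$, and the inversion in which the relaxation $\sqrt{a+b}\leq\sqrt{a}+\sqrt{b}$ is exactly what turns the sharper root $u_\star$ into the stated additive deviation $\sqrt{2t\sum_i\mathbb{E}[X_i^2]}+\tfrac{2M}{3}t$ with the factor $2$ from the two-sided union bound.
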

Recall that for $\bB \in \{\bPi, \bX \}$, $$\mathcal{S}_{\bB}(T) = \left \{ \bTheta \in \underset{z \in \mathcal{Z}_{n,k}}{\cup}\mathcal{T}_z : \ \Vert\bTheta\Vert_{\infty}\leq \rho_n,\ \underset{i<j}{\min} \{\bTheta_{ij}\} \geq \gamma_n, \left \Vert \bTheta - \widetilde\bTheta \right \Vert_{2,\bB}^2 \leq T \right\}$$ and let $\mathcal{S}_z(T) \triangleq \mathcal{T}_z \cap \mathcal{S}_{\bB}(T)$ be the set of matrices in $\mathcal{S}_{\bB}(T)$ that are block constant for the label $z$. Let $\widetilde\bTheta^z$ be the projection of $\widetilde \bTheta$ onto $\mathcal{T}_z$ for the $\bB$-weighted Frobenius norm:
$$\widetilde \bTheta^z \triangleq \underset{\bTheta \in \mathcal{T}_z }{\argmin} \left \Vert\bTheta - \widetilde \bTheta \right \Vert_{2,\bB}.$$ 
Note that if $\mathcal{S}_z(T) \not = \emptyset$, then $\widetilde\bTheta^z \in \mathcal{S}_z(T)$. If $\mathcal{S}_z(T) = \emptyset$, we set $\underset{\bTheta \in \mathcal{S}_z(T)}{\sup}\left \vert \left\langle\bSigma \vert \widetilde\bTheta^z- \bTheta\right\rangle \right \vert = 0$.
We decompose the error in two terms. 
\begin{equation}\label{eq:(I)(II)N2}
\begin{split}
\mathbb{E}^{\bB} \left[\underset{\bTheta \in \mathcal{S}_{\bB}(T)}{\sup} \left \vert \sum_{1 \leq i < j \leq n} \bSigma_{ij}\left( \bTheta_{ij} - \widetilde \bTheta_{ij} \right) \right \vert\right] & \leq \mathbb{E}^{\bB} \left[ \underset{z\in \mathcal{Z}_{n,k},\ \mathcal{S}_z(T) \not = \emptyset}{\sup}\left \vert \left\langle\bSigma \Big\vert \widetilde{\bTheta} - \widetilde\bTheta^z\right\rangle\right \vert\right]\\ 
& \ \text{ }\ + \  \mathbb{E}^{\bB} \left[ \underset{z\in \mathcal{Z}_{n,k}}{\sup}\underset{\bTheta \in \mathcal{S}_z(T)}{\sup}\left \vert \left\langle\bSigma \Big\vert \widetilde\bTheta^z- \bTheta\right\rangle \right \vert\right]\\
 & \leq  (I) + (II).
\end{split}
\end{equation}
The term $(I)$ denotes $\mathbb{E}^{\bB} \left[ \underset{z\in \mathcal{Z}_{n,k},\ \mathcal{S}_z(T) \not = \emptyset}{\sup}\left \vert \left\langle\bSigma \Big\vert \widetilde{\bTheta} - \widetilde\bTheta^z\right\rangle\right \vert\right]$ and corresponds to the error induced by an error on the label. The term $(II)$ denotes $\mathbb{E}^{\bB} \left[ \underset{z\in \mathcal{Z}_{n,k}}{\sup}\underset{\bTheta \in \mathcal{S}_z(T)}{\sup}\left \vert \left\langle\bSigma \Big\vert \widetilde\bTheta^z- \bTheta\right\rangle \right \vert\right]$ and corresponds to the error induced by a Bernoulli noise.

\underline{Control of (I):} To control the first term of \eqref{eq:(I)(II)N2}, recall that for any $z \in \mathcal{Z}_{n,k}$ such that $\mathcal{S}_z(T) \not = \emptyset$, $\widetilde{ \bTheta}^z \in \mathcal{S}_z(T)$ and by hypothesis,  $\underset{i<j}{\sum}\mathbb{E}^{\bB} \left[\bSigma_{ij}^2\left(\widetilde \bTheta_{ij} - \widetilde{ \bTheta}^z_{ij}\right)^2\right] \leq \alpha\left \Vert \widetilde\bTheta - \widetilde{ \bTheta}^z \right \Vert_{2,\bB}^2 \leq \alpha T $. Furthermore, $\left \Vert \bSigma \right  \Vert_{\infty} \leq 1$ so $\left \vert \bSigma_{ij}(\widetilde\bTheta_{ij} - \widetilde{ \bTheta}^z_{ij})\right \vert \leq \rho_n$. Since $\left \vert \mathcal{Z}_{n,k} \right \vert \leq n\log(k)$, the union bound and Bernstein's inequality imply
\begin{eqnarray*}
\mathbb{P}^{\bB}\left(\underset{z\in \mathcal{Z}_{n,k},\ \mathcal{S}_z(T) \not = \emptyset}{\sup}\left \vert \left\langle\bSigma \vert \widetilde \bTheta - \widetilde\bTheta^z\right\rangle \right\vert \geq  \sqrt{2\alpha T\left(t + n\log(k)\right)} + \frac{2\rho_n}{3}\left(t + n\log(k)\right)\right) &\leq& 2e^{-t}\\
\mathbb{P}^{\bB}\left(\underset{z\in \mathcal{Z}_{n,k},\ \mathcal{S}_z(T) \not = \emptyset}{\sup}\left \vert \left\langle\bSigma \vert \widetilde \bTheta - \widetilde\bTheta^z\right\rangle \right\vert \geq  \frac{\gamma_n\alpha T}{64^3\rho_n^2} + \left(\frac{2\rho_n}{3} + \frac{64^3\rho_n^2}{\gamma_n} \right)\left(t + n\log(k)\right)\right) &\leq& 2e^{-t}.
\end{eqnarray*}
Integrating the last inequality and using $\frac{\rho_n}{\gamma_n} \geq 1$, we get that for some absolute constant $C$
\begin{equation}\label{cardit}
(I) \leq  \frac{\alpha\gamma_n T}{64^3\rho_n^2} + C \frac{\rho_n^2}{\gamma_n} n\log(k).
\end{equation}
\underline{Control of (II):} The control of the second term of \eqref{eq:(I)(II)N2} is more involved. We adapt the argument developped in \cite{KloppGraphon} and consider only $z \in \cZ_{n,k}$ such that $\mathcal{S}_z(T) \not = \emptyset$. By property of the projection, we have for any $\bTheta \in \mathcal{S}_z(T)$, $ \left \Vert \bTheta - \widetilde \bTheta^z\right \Vert_{2,\bB}^2 \leq  \left \Vert \bTheta - \widetilde \bTheta \right \Vert_{2,\bB}^2 \leq T$. Thus $\left( \widetilde\bTheta^z- \bTheta \right) \in \mathcal{A}_z(T)$, where $$\mathcal{A}_z(T) \triangleq \left \{\bTheta \in \mathcal{T}_z: \left \Vert \bTheta \right \Vert_{\infty} \leq \rho_n,\ \left \Vert \bTheta \right \Vert_{2,\bB}^2 \leq T \right \},$$ so $\underset{\bTheta \in \mathcal{S}_z(T)}{\sup}\left \vert \left\langle\bSigma \vert \widetilde\bTheta^z- \bTheta\right\rangle  \right \vert \leq \underset{\bTheta \in \mathcal{A}_z(T)}{\sup}\left \vert \left\langle\bSigma \vert \bTheta\right\rangle  \right \vert.$ Let $\widehat{\bT}^z \in \mathcal{A}_z(T)$ be such that 
\begin{equation}\label{defTz}
\left \vert \left\langle\bSigma \vert \widehat{\bT}^z\right\rangle  \right \vert \triangleq \underset{\bTheta \in \mathcal{A}_z(T)}{\sup}\left \vert \left\langle\bSigma \vert \bTheta\right\rangle  \right \vert.
\end{equation}
Note that $\bTheta \rightarrow \left \vert \left\langle \bSigma \vert \bTheta \right\rangle \right \vert$ is continous and reaches its supremum on $\mathcal{A}_z(T)$. Indeed, either for any $1\leq i<j\leq n$, $\bB_{ij} >0$ so $\left \Vert \cdot \right \Vert_{\bB}$ is a norm and $\mathcal{A}_z(T)$ is compact, or we can find a subspace $\mathcal{V}$ of $\mathbb{R}^{\frac{(n-1)(n-2)}{2}}$ of dimension $\left \vert \left\{ 1 \leq i < j \leq n: \bB_{ij} >0 \right\}\right \vert$ such that for any $\bTheta \in \mathbb{R}^{\frac{(n-1)(n-2)}{2}}$, $\langle \bSigma \vert \bTheta \rangle = \langle \bSigma \vert \mathcal{P}_{\mathcal{V}}(\bTheta) \rangle$ where $\mathcal{P}_{\mathcal{V}}$ denotes the projection onto $\mathcal{V}$, and $\mathcal{A}_z(T) \cap \mathcal{V}$ is compact.

To control $\left \vert \left\langle\bSigma \vert \widehat{\bT}^z\right\rangle  \right \vert$, we build a finite set with small cardinality that approximates $\widehat{\bT}^z$ well both in the weighted Frobenius norm and in the supremum norm. More precisely, our goal is to construct a finite set $\widetilde{\mathcal{C}}_z(T)$ containing a matrix $\widehat \bV$ such that $2\left(\widehat{\bT}^z - \widehat \bV\right) \in \mathcal{A}_z(T)$. To apply Bernstein's inequality, we also need to be able to control the suppremum norm on this set. Our first step will be to construct such a set. 

We denote by $\mathcal{B}_r$ the ball centered at $\textbf{0}$ and of radius $r$ for the weighted Frobenius norm $\left \Vert \cdot \right \Vert_{2, \bB}$. Let $\mathcal{C}_z$ be a minimal $\sqrt{T}/2$-net for the weighted Frobenius norm on $ \mathcal{B}_{\sqrt{T}} \cap \mathcal{T}_z$.  Note that $\mathcal{A}_z \subset \mathcal{B}_{\sqrt{T}} \cap \mathcal{T}_z$, so there exists $\widehat{\bV} \in \mathcal{C}_z(T)$ such that $\left \Vert \widehat{\bV} - \widehat{\bT}^z \right \Vert_{2, \bB} \leq \frac{\sqrt{T}}{2}$. Since our choice of net does not allow us to directly bound $\left \Vert \widehat\bV - \widehat{\bT}^z \right \Vert_{\infty}$, we extend this net using the following argument. For any $\bV \in \mathcal{C}_z$ and any matrix $\bU \in \{-1,0,1\}^{k \times k}$, let $\bV^{\bU} \in \mathbb{R}^{n\times n}$ be such that $\bV^{\bU}_{ii} = 0$ and for any $i<j$, 
$$\bV^{\bU}_{ij} = \operatorname{\sign}(\bV_{ij})\left(\vert\bV_{ij} \vert\land \rho_n \right)\left( 1 - \left \vert \bU_{z(i) z(j)} \right \vert\right) + \bU_{z(i) z(j)}\frac{\rho_n}{2}.$$
Recall that $\left \Vert \widehat{\bT}^z \right \Vert_{\infty} \leq \rho_n$ so for any $\bV \in \mathcal{C}_z(T)$ we have $\left \vert \operatorname{\sign}(\bV_{ij})\left(\vert\bV_{ij} \vert\land \rho_n \right) - \widehat{\bT}^z_{ij} \right \vert \leq \left \vert \bV_{ij} - \widehat{\bT}^z_{ij} \right \vert$. This implies that $\left \Vert \bV^{\textbf{0}}- \widehat{\bT}^z \right \Vert_{2,\bB} \leq \left \Vert \bV- \widehat{\bT}^z \right \Vert_{2, \bB}.$

Now, let $ \widetilde{\mathcal{C}}_z(T)= \left\{ \bV^{\bU}: \bV \in \mathcal{C}_z(T), \bU \in \{-1,0,1\}^{k \times k}_{sym} \right\}$ and $\widehat{\bU} = \underset{\bU \in \{-1,0,1\}^{k \times k}}{\operatorname{\argmin}} \left \Vert \widehat{\bV}^{\bU} - \widehat{\bT}^z \right \Vert_{\infty}$. By definition, for any $(a,b) \in k \times k$, $\widehat{\bU}$ minimises $\left \vert \widehat{\bV}^{\bU}_{z^{-1}(a) z^{-1}(b)} - \widehat{\bT}^z_{z^{-1}(a) z^{-1}(b)} \right \vert$, so it is also a minimizer of $\left \Vert \widehat{\bV}^{\bU} - \widehat{\bT}^z \right \Vert_{2,\bB} = \underset{a,b \in [k]}{\sum} \left(\underset{(i,j) \in z^{-1}(a) \times z^{-1}(b), i \neq j}{\sum}\bB_{ij} \right) \left \vert \widehat{\bV}^{\bU}_{z^{-1}(a) z^{-1}(b)} - \widehat{\bT}^z_{z^{-1}(a) z^{-1}(b)} \right \vert^2$. Therefore $$\left \Vert \widehat{\bV}^{\widehat\bU}- \widehat{\bT}^z \right \Vert_{2,\bB} \leq \left \Vert \widehat{\bV}^{\textbf{0}}- \widehat{\bT}^z \right \Vert_{2,\bB} \leq \left \Vert \widehat{\bV}- \widehat{\bT}^z \right \Vert_{2,\bB} \leq\frac{\sqrt{T}}{2}.$$ 
Furthermore $\left \Vert \widehat{\bV}^{\widehat\bU}- \widehat{\bT}^z \right \Vert_\infty \leq \left \Vert \widehat{\bV}^{\bU^*}- \widehat{\bT}^z \right \Vert_\infty$, where $\bU^*_{ab} = \operatorname{\sign}(\widehat{\bT}^z_{z^{-1}(a)z^{-1}(b)})$. By construction, $$\left \Vert \widehat{\bV}^{\bU^*}- \widehat{\bT}^z \right \Vert_\infty = \sup_{i<j} \left \vert  \widehat{\bT}^z_{ij} - \mbox{sign}(\widehat{\bT}^z_{ij}) \frac{\rho_n}{2}\right \vert = \sup_{i<j} \left \vert \left \vert \widehat{\bT}^z_{ij} \right \vert - \frac{\rho_n}{2}\right \vert \leq \frac{\rho_n}{2}.$$ Hence, $2\left(\widehat{\bT}^z - \widehat{\bV}^{\hat\bU}\right) \in \mathcal{A}_z(T)$. Thus, we have shown that
\begin{eqnarray*}\label{arguSup}
2 \left \vert \left\langle\bSigma \vert \widehat{\bT}^z - \widehat{\bV}^{\hat\bU} \right\rangle  \right \vert &\leq &\underset{\bTheta \in \mathcal{A}_z(T)}{\sup}\left \vert \left\langle\bSigma \vert \bTheta\right\rangle  \right \vert \triangleq \left \vert \left\langle\bSigma \vert \widehat{\bT}^z\right\rangle  \right \vert\\
2\left \vert \left\langle\bSigma \vert \widehat{\bT}^z \right\rangle  \right \vert - 2\left \vert \left\langle\bSigma \vert \widehat{\bV}^{\hat\bU}\right\rangle  \right \vert  & \leq &\left \vert \left\langle\bSigma \vert \widehat{\bT}^z\right\rangle  \right \vert \\ 
\left \vert \left\langle\bSigma \vert \widehat{\bT}^z \right\rangle  \right \vert  &\leq& 2\left \vert \left\langle\bSigma \vert \widehat{\bV}^{\widehat\bU}\right\rangle \right \vert .
\end{eqnarray*}
This and \eqref{defTz} allows us to conclude that 
\begin{equation} \label{epNet1}
\underset{z\in \mathcal{Z}_{n,k}}{\sup}\underset{\bTheta \in \mathcal{S}_z(T)}{\sup}\left \vert \left\langle\bSigma \vert \widetilde\bTheta^z - \bTheta\right\rangle \right \vert \leq 2 \underset{z\in \mathcal{Z}_{n,k}}{\sup} \underset{\bV \in \widetilde{\mathcal{C}}_{z}(T)}{\sup} \left \vert \left\langle\bSigma \vert \bV \right\rangle  \right \vert.
\end{equation}
To bound the right hand side of \eqref{epNet1}, we recall that by hypothesis for any $\bV \in \widetilde{\mathcal{C}}_{z}(T)$,  $\underset{i<j}{\sum}\mathbb{E}^{\bB} \left[\bSigma_{ij}^2\bV_{ij}^2\right] \leq \alpha \left \Vert \bV\right \Vert_{2,\bB}^2$ and note that $\left \Vert \bV \right \Vert_{\infty}\leq \rho_n$ and $\left \Vert \bV \right \Vert_{2,\bB}\leq \sqrt{T}$. We use Bernstein's inequality and the union bound to obtain
\begin{eqnarray}\label{eq:cardi1}
\mathbb{P}\left(\underset{z\in \mathcal{Z}_{n,k}}{\sup} \underset{\bV \in \widetilde{\mathcal{C}}_{z}(T)}{\sup} \left \vert \left\langle\bSigma \vert \bV \right\rangle  \right \vert \geq  \sqrt{2\alpha Tt} + \frac{2}{3}t\right) &\leq& 2e^{-t + n\log(k) + \underset{\bV \in \widetilde{\mathcal{C}}_{z}(T)}{\sup} \log \left(\left \vert \widetilde{\mathcal{C}}_{z}(T) \right \vert\right)}.
\end{eqnarray}
By construction of $\widetilde{\mathcal{C}}_{z}(T)$, we have $\left \vert \widetilde{\mathcal{C}}_{z}(T)\right \vert = \left \vert \mathcal{C}_{z}(T)\right \vert \times 3^{k^2}$. The following classical result on the covering number of a ball will help us bound $\left \vert \mathcal{C}_{z}(T)\right \vert$ (see, e.g., Lemma 5.2 in \cite{vershynin}).
\begin{lem}\label{Lemma:coveringNumber}
Let $\mathcal{B}_r$ the ball of a subspace of $\mathbb{R}^{n}$ of dimension $d$ centered at $\textbf{0}$ and of radius $r$ for the euclidean norm, and $\mathcal{N}\left(\mathcal{B}_r,\epsilon\right)$ its $\epsilon$-covering number, that is the minimal cardinality of a set $\mathcal{C}$ such that for any $\bX \in \mathcal{B}_r$, there exists $\bY \in  \mathcal{C}$ such that $\left \Vert \bX - \bY \right \Vert \leq \epsilon$. Then 
$$\mathcal{N}\left(\mathcal{B}_r,\epsilon\right) \leq \left(\frac{3r}{\epsilon}\right)^{d}.$$ 
\end{lem}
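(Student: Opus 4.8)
The plan is to use the classical volumetric (packing) argument. First I would reduce to the unit ball: since scaling by $1/r$ is a bijection of $\mathbb{R}^n$ that multiplies Euclidean distances by $1/r$, it maps $\mathcal{B}_r$ onto $\mathcal{B}_1$ and hence $\mathcal{N}(\mathcal{B}_r,\epsilon) = \mathcal{N}(\mathcal{B}_1,\epsilon/r)$. It therefore suffices to prove $\mathcal{N}(\mathcal{B}_1,\delta) \leq (3/\delta)^{d}$ for every $\delta \in (0,1)$; the case $\delta \geq 1$ is trivial since then $\mathcal{C} = \{\mathbf{0}\}$ is an $\epsilon$-net. Throughout, all balls are taken inside the fixed $d$-dimensional subspace, which I identify with $\mathbb{R}^{d}$ equipped with Lebesgue measure $\mu$, so that $0 < \mu(\mathcal{B}_1) < \infty$.

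Next I would pass to a packing. Let $\mathcal{C} \subseteq \mathcal{B}_1$ be a \emph{maximal} $\delta$-separated subset, i.e.\ $\Vert \bX - \bY \Vert > \delta$ for all distinct $\bX,\bY \in \mathcal{C}$, and $\mathcal{C}$ cannot be enlarged while keeping this property. Such a set exists and is finite by compactness of $\mathcal{B}_1$. Maximality forces $\mathcal{C}$ to be a $\delta$-net of $\mathcal{B}_1$: any point at distance $> \delta$ from every element of $\mathcal{C}$ could be adjoined to $\mathcal{C}$, contradicting maximality. Hence $\mathcal{N}(\mathcal{B}_1,\delta) \leq |\mathcal{C}|$, and it remains to bound $|\mathcal{C}|$.

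Finally I would compare volumes. The open balls $B(\bX,\delta/2)$, $\bX \in \mathcal{C}$, are pairwise disjoint by the triangle inequality (two points lying in a common such ball would be within distance $\delta$), and each is contained in $\mathcal{B}_{1+\delta/2} \subseteq \mathcal{B}_{3/2}$. Since $\mu\bigl(B(\bX,\delta/2)\bigr) = (\delta/2)^{d}\mu(\mathcal{B}_1)$ and $\mu(\mathcal{B}_{3/2}) = (3/2)^{d}\mu(\mathcal{B}_1)$, additivity of $\mu$ over the disjoint union gives
\[
|\mathcal{C}|\left(\frac{\delta}{2}\right)^{d}\mu(\mathcal{B}_1) \;\leq\; \left(\frac{3}{2}\right)^{d}\mu(\mathcal{B}_1),
\]
so $|\mathcal{C}| \leq (3/\delta)^{d}$. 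Undoing the scaling with $\delta = \epsilon/r$ yields $\mathcal{N}(\mathcal{B}_r,\epsilon) \leq (3r/\epsilon)^{d}$, as claimed.

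I do not expect any genuine obstacle here, as the lemma is standard. The only points meriting a little care are (a) carrying out the volume comparison inside the $d$-dimensional subspace, so that the reference volume $\mu(\mathcal{B}_1)$ is finite and positive and cancels cleanly, and (b) the packing/covering duality — observing that a maximal $\delta$-separated set is automatically a $\delta$-net — which is exactly what produces the clean constant $3$ rather than something larger.
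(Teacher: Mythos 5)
Your proof is correct and is essentially the same argument the paper relies on: the paper offers no proof of its own but points to Lemma 5.2 of \cite{vershynin}, whose proof is precisely the maximal $\delta$-separated set (packing/covering duality) plus volume-comparison argument you wrote out, including the rescaling to the unit ball of the $d$-dimensional subspace. The only cosmetic caveat is that the simplified constant $(3r/\epsilon)^d$, as in the lemma as stated, is only meaningful for $\epsilon \leq r$ (the paper applies it with $\epsilon = \sqrt{T}/2$ and $r=\sqrt{T}$), which is exactly the regime $\delta\in(0,1)$ your reduction handles.
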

Extending the proof Lemma \ref{Lemma:coveringNumber} to a weighed euclidean norm is straightforward. Putting Lemma \ref{Lemma:coveringNumber} into equation \eqref{eq:cardi1} and noting that $\mathcal{T}_z$ spans a subspace of $\mathbb{R}^{\frac{(n-1)(n-2)}{2}}$ of dimension $\frac{k(k-1)}{2}$, we get that for some absolute constant $C$
\begin{eqnarray*}
\mathbb{P}\left(\underset{z\in \mathcal{Z}_{n,k}}{\sup} \underset{\bV \in \widetilde{\mathcal{C}}_{z}(T)}{\sup} \left \vert \left\langle\bSigma \vert \bV \right\rangle  \right \vert \geq  \sqrt{2\alpha T\left( t + n\log(k) + k^2\log(C)\right)} + \frac{2\rho_n}{3}\left(t + n\log(k) + k^2\log(C)\right)\right) &\leq &2e^{-t} \nonumber \\
\mathbb{P}\left(\underset{z\in \mathcal{Z}_{n,k}}{\sup} \underset{\bV \in \widetilde{\mathcal{C}}_{z}(T)}{\sup} \left \vert \left\langle\bSigma \vert \bV \right\rangle  \right \vert \geq \frac{\alpha\gamma_n T}{2 \times 64^3\rho_n^2} + \left(\frac{2\rho_n}{3} + \frac{2\times 64^2\rho_n^2}{\gamma_n} \right)\left(t + n\log(k) + k^2\log(C)\right)\right) \leq 2e^{-t}.
\end{eqnarray*}
 We integrate and find for some absolute constant $C>0$
\begin{equation}\label{cardis}
\mathbb{E}\left[
\underset{z\in \mathcal{Z}_{n,k}}{\sup}\underset{\bTheta \in \mathcal{S}_z(T)}{\sup}\left \vert \left\langle\bSigma \vert \widetilde{\bTheta}^z- \bTheta\right\rangle  \right \vert \right]  \leq  \frac{\alpha \gamma_n T}{64^3\rho_n^2}  + C\frac{\rho_n^2}{\gamma_n}\left(n\log(k) + k^2\right).
\end{equation}
Combining the bounds \eqref{cardis} and \eqref{cardit} yields the desired result.

\subsection{Proof of Lemma \ref{EqKl2KlPi}}
 The proof of Lemma \ref{EqKl2KlPi} closely follows that of Lemma \ref{EqNPiNX} and we only sketch it. Recall that $\epsilon_n \triangleq C\frac{\rho_n^2}{\gamma_n^2}\left(n\log(k) + k^2 \right)$ where the absolute constant $C$ is larger than the constant appearing in Lemma \ref{Lemma:ProbBeKLPi}, and that $\epsilon^0 \triangleq \rho_n\epsilon_n$. We show that the probability of the following ``bad" event is small:
\[\mathcal{E} \triangleq \left\{ \exists \bTheta \in \mathcal{S}_{\bPi}:\left \vert \Delta \cK^{\bTheta^*}_{\bPi}(\bTheta, \widetilde\bTheta) - \Delta \cK^{\bTheta^*}_{\bX}(\bTheta, \widetilde\bTheta) \right \vert > \frac{1}{2\times32\rho_n} \left \Vert\bTheta - \widetilde \bTheta \right \Vert_{2, \bPi}^2 + \epsilon_n \right\}.\]
Again, we slice $\mathcal{S}_{\bPi}$ in different sets $\mathcal{S}_{l,\bPi}$ defined as
$\mathcal{S}_{l,\bPi} \triangleq \left \{\bTheta \in \mathcal{S}_{\bPi}:  32^{l-1}(2\epsilon^0) \leq \left \Vert \bTheta - \widetilde\bTheta \right \Vert_{2,\bPi}^2 \leq 32^l(2\epsilon^0)\right \}$
on which we control the events $\mathcal{E}_{l} \triangleq \left\{ \exists \bTheta \in \mathcal{S}_{l,\bPi}:\left \vert \Delta \cK^{\bTheta^*}_{\bPi}(\bTheta, \widetilde\bTheta) - \Delta \cK^{\bTheta^*}_{\bX}(\bTheta, \widetilde\bTheta) \right \vert > \frac{32^{l-1}\times 2\epsilon^0}{4\times 32\rho_n} + \epsilon_n \right\}.$
To do this, we set $\mathcal{S}_{\bPi}(T) \triangleq \left \{\bTheta \in \mathcal{S}_{\bPi}: \left \Vert \bTheta - \widetilde\bTheta \right \Vert_{2,\bPi}^2 \leq T\right \}$ and we control the probability of the events
\[\mathcal{E}(T) = \left\{ \exists \bTheta \in \mathcal{S}_{\bPi}(T): \left \vert \Delta \cK^{\bTheta^*}_{\bPi}(\bTheta, \widetilde\bTheta) - \Delta \cK^{\bTheta^*}_{\bX}(\bTheta, \widetilde\bTheta) \right \vert > \frac{T}{64^2\rho_n} +\epsilon_n \right\}.\] 
The following lemma helps us bound the probability of the events $\mathcal{E}(T)$.
\begin{lem}\label{Lemma:ProbBeKLPi}
 Let $\widetilde Z_{T}=\underset{\bTheta\in \mathcal{S}_{\bPi}(T)}{\sup} \left \vert \Delta \cK^{\bTheta^*}_{\bPi}(\bTheta, \widetilde\bTheta) - \Delta \cK^{\bTheta^*}_{\bX}(\bTheta, \widetilde\bTheta) \right \vert$. There exists two absolute constants $C$, $C' >0$ such that
 \begin{equation*}
 \mathbb{P}\left( \widetilde Z_{T} \geq \frac{T}{64^2\rho_n} +  C\frac{\rho_n^2}{\gamma_n^2}\left(n\log(k) + k^2 \right)\right) \leq \exp\left(-\frac{C'T\gamma_n^2}{\rho_n^2}\right).
 \end{equation*}
 \end{lem}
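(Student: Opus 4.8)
The plan is to follow the template of the proof of Lemma~\ref{Lemma:ProbBeNPi}: I would decompose $\widetilde Z_T$ into its deviation from, and the size of, its expectation, proving two sub-lemmas analogous to Lemma~\ref{ConcentrationNPi} and Lemma~\ref{Lemma:esperancePi} and then combining them. First I would rewrite the object of interest as a centered empirical process: since $\Delta\cK^{\bTheta^*}_{\bB}(\bTheta,\widetilde\bTheta)=\sum_{i<j}\bB_{ij}\bigl(\cK(\bTheta^*_{ij},\bTheta_{ij})-\cK(\bTheta^*_{ij},\widetilde\bTheta_{ij})\bigr)$ for $\bB\in\{\bPi,\bX\}$, we have $\widetilde Z_T=\sup_{\bTheta\in\mathcal{S}_{\bPi}(T)}\bigl|\sum_{i<j}(\bPi_{ij}-\bX_{ij})\,g_{ij}(\bTheta)\bigr|$ with $g_{ij}(\bTheta):=\cK(\bTheta^*_{ij},\bTheta_{ij})-\cK(\bTheta^*_{ij},\widetilde\bTheta_{ij})$, the supremum being over a separable class. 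The crucial deterministic ingredient is the Lipschitz bound for $q\mapsto\cK(p,q)$ on $[\gamma_n,\rho_n]$: since $\partial_q\cK(p,q)=(q-p)/\bigl(q(1-q)\bigr)$, for $p,q\in[\gamma_n,\rho_n]$ one has $|\partial_q\cK(p,q)|\le\rho_n/\bigl(\gamma_n(1-\rho_n)\bigr)=:L$, which is of order $\rho_n/\gamma_n$ in the regime of interest where $1-\rho_n$ is bounded away from $0$. Consequently $|g_{ij}(\bTheta)|\le L\,|\bTheta_{ij}-\widetilde\bTheta_{ij}|\le L\rho_n$ and $g_{ij}(\bTheta)^2\le L^2(\bTheta_{ij}-\widetilde\bTheta_{ij})^2$.

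For the concentration sub-lemma I would apply Bousquet's inequality (Theorem~\ref{Bousquet}) to $f^{\bTheta}_{ij}(\bX_{ij})=(\bPi_{ij}-\bX_{ij})\,g_{ij}(\bTheta)/M$ with $M:=L\rho_n$; these are centered and bounded by $1$, and for every $\bTheta\in\mathcal{S}_{\bPi}(T)$ one has $\sum_{i<j}\mathbb{E}\bigl[f^{\bTheta}_{ij}(\bX_{ij})^2\bigr]\le M^{-2}\sum_{i<j}\bPi_{ij}\,g_{ij}(\bTheta)^2\le M^{-2}L^2\|\bTheta-\widetilde\bTheta\|_{2,\bPi}^2\le T/\rho_n^2=:v$. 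Exactly as in Lemma~\ref{ConcentrationNPi}, using $\sqrt{a+b}\le\sqrt a+\sqrt b$ and $2\sqrt{ab}\le a+b$ converts Bousquet's bound into $\mathbb{P}\bigl(\widetilde Z_T>2\,\mathbb{E}[\widetilde Z_T]+\tfrac{4Mx}{3}+M\sqrt{2xv}\bigr)\le e^{-x}$. Choosing $x$ of order $T\gamma_n^2/\rho_n^2$ (here one uses $\gamma_n\le\rho_n\le1$ and $M\asymp\rho_n^2/\gamma_n$) makes both the linear term $\tfrac{4Mx}{3}$ (of order $\gamma_n T$) and the fluctuation term $M\sqrt{2xv}$ (of order $T$) at most $\tfrac{T}{4\cdot 64^2\rho_n}$, whence $\mathbb{P}\bigl(\widetilde Z_T>2\,\mathbb{E}[\widetilde Z_T]+\tfrac{T}{2\cdot 64^2\rho_n}\bigr)\le\exp\bigl(-C'T\gamma_n^2/\rho_n^2\bigr)$.

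For the expectation sub-lemma, symmetrization (Lemma~\ref{Lemma:symmetrization}, applied to the fixed index set $\{(g_{ij}(\bTheta))_{i<j}:\bTheta\in\mathcal{S}_{\bPi}(T)\}$) gives $\mathbb{E}[\widetilde Z_T]\le2\,\mathbb{E}\bigl[\sup_{\bTheta}\bigl|\sum_{i<j}\bepsilon_{ij}\bX_{ij}\,g_{ij}(\bTheta)\bigr|\bigr]$. Mimicking the use of $x\mapsto x^2/(2\rho_n)$ in Lemma~\ref{Lemma:esperancePi}, I would write $\bX_{ij}g_{ij}(\bTheta)=L\,\widetilde\phi_{ij}\bigl(\bX_{ij}(\bTheta_{ij}-\widetilde\bTheta_{ij})\bigr)$, where $\widetilde\phi_{ij}(x):=L^{-1}\bigl(\cK(\bTheta^*_{ij},\widetilde\bTheta_{ij}+x)-\cK(\bTheta^*_{ij},\widetilde\bTheta_{ij})\bigr)$ on the interval on which $\widetilde\bTheta_{ij}+x\in[\gamma_n,\rho_n]$ — which is $1$-Lipschitz there by the very definition of $L$ — extended to a $1$-Lipschitz function on $\mathbb{R}$ vanishing at $0$ (so that the event $\bX_{ij}=0$ contributes nothing). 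Conditioning on $\bX$ and applying Talagrand's contraction principle (Lemma~\ref{Lemma:contraction}) yields $\mathbb{E}[\widetilde Z_T]\le4L\,\mathbb{E}\bigl[\sup_{\bTheta\in\mathcal{S}_{\bPi}(T)}\bigl|\sum_{i<j}\bepsilon_{ij}\bX_{ij}(\bTheta_{ij}-\widetilde\bTheta_{ij})\bigr|\bigr]$, and Lemma~\ref{BornePdtScalairePi} with $\bB=\bPi$, $\bSigma_{ij}=\bepsilon_{ij}\bX_{ij}$ and $\alpha=1$ (valid since $\sum_{i<j}\mathbb{E}^{\bPi}[\bepsilon_{ij}^2\bX_{ij}^2\bTheta_{ij}^2]=\|\bTheta\|_{2,\bPi}^2$) bounds the last expectation by $\tfrac{\gamma_n T}{32\cdot 64^2\rho_n^2}+C\tfrac{\rho_n^2}{\gamma_n}\bigl(n\log(k)+k^2\bigr)$. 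Using $L\le2\rho_n/\gamma_n$ and $\rho_n\le1$ this gives $\mathbb{E}[\widetilde Z_T]\le\tfrac{T}{4\cdot 64^2\rho_n}+C\tfrac{\rho_n^2}{\gamma_n^2}\bigl(n\log(k)+k^2\bigr)$. Inserting this into the concentration bound yields $\widetilde Z_T\le\tfrac{T}{64^2\rho_n}+C\tfrac{\rho_n^2}{\gamma_n^2}\bigl(n\log(k)+k^2\bigr)$ with probability at least $1-\exp\bigl(-C'T\gamma_n^2/\rho_n^2\bigr)$, which is the claim.

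The one point that requires care is the sharp Lipschitz constant $L\asymp\rho_n/\gamma_n$: it is essential to exploit that the numerator $|q-p|$ of $\partial_q\cK$ is at most $\rho_n$ on the restricted domain (which gains the decisive factor $\rho_n$ over the $1/\gamma_n$ one would obtain from a $\{0,1\}$-valued first argument, as in the loss involving $\bA$), together with the matching normalization $M=L\rho_n$ in Bousquet's inequality and the rescaling by $1/L$ in the contraction step; this is exactly what produces the factor $\tfrac{1}{\rho_n}$ in front of $T$ and the exponential rate $T\gamma_n^2/\rho_n^2$. Everything else is bookkeeping identical to the proofs of Lemma~\ref{ConcentrationNPi}, Lemma~\ref{Lemma:esperancePi} and Lemma~\ref{BornePdtScalairePi}.
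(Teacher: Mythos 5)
Your proposal is correct and follows essentially the same route as the paper: Bousquet's inequality for the concentration of $\widetilde Z_T$ around its mean, then symmetrization, Talagrand's contraction applied to a rescaled KL-based Lipschitz map, and Lemma \ref{BornePdtScalairePi} with $\bB=\bPi$, $\bSigma_{ij}=\bepsilon_{ij}\bX_{ij}$, $\alpha=1$ for the expectation, combined with the same choice $x\asymp T\gamma_n^2/\rho_n^2$. The only cosmetic difference is that you encode the Lipschitz control through $\partial_q\cK(p,q)$ (constant $\asymp\rho_n/\gamma_n$), whereas the paper uses the $1/\gamma_n$-Lipschitzness of $\log$ and $\log(1-\cdot)$ together with $\Vert\bTheta-\widetilde\bTheta\Vert_{\infty}\leq\rho_n$, which yields the same effective bounds in the sparse regime the paper works in.
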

 \begin{proof}
 To prove Lemma \ref{Lemma:ProbBeKLPi}, we first show that $Z_T$ concentrates around its expectation and then bound this term. 
\begin{lem}\label{ConcentrationKlPi}
Let $\widetilde Z_T$ be defined as in Lemma \ref{Lemma:ProbBeKLPi}. Then there exists an absolute constant $C>0$ such that 
$$\mathbb{P} \left(\widetilde Z_T> 2\mathbb{E}[\widetilde Z_T] + \frac{T}{2\times 64^2\rho_n} \right) \leq \exp\left(-\frac{CT\gamma_n^2}{\rho_n^2}\right).$$
\end{lem}
 \begin{lem}\label{Lemma:esperanceKlPi}
Let $\widetilde Z_{T}$ be as in Lemma \ref{Lemma:ProbBeKLPi}, then there exists an absolute constant $C>0$ such that 
\begin{align}\label{eqEsperance}
\mathbb{E}\left[\widetilde Z_{T}\right] \leq \frac{T}{4\times 64^2\rho_n} + C\frac{\rho_n^2}{\gamma_n^2}\left(n\log(k) + k^2 \right).
\end{align}
 \end{lem}
Putting together Lemma \ref{ConcentrationKlPi} and Lemma \ref{Lemma:esperanceKlPi}, we get that there exists two absolute constants $C,C'>0$ such that
 $$\mathbb{P} \left( \widetilde Z_{T} \geq \frac{T}{64^2\rho_n}+ C\frac{\rho_n^2}{\gamma_n^2}(n\log(k) + k^2)\right) \leq \exp\left(-\frac{C'T\gamma_n^2}{\rho_n^2}\right).$$
This concludes the proof of Lemma \ref{Lemma:ProbBeKLPi}.
 
 \end{proof}
Lemma \ref{Lemma:ProbBeKLPi} and the arguments developped to prove Lemma \ref{EqNPiNX} help us conclude the proof of Lemma \ref{EqKl2KlPi}.
 
\subsubsection{Proof of Lemma \ref{ConcentrationKlPi}}

Recall that by definition of $\widetilde Z_{T}$,
\begin{eqnarray*}
\widetilde Z_{T} &=& \underset{\bTheta\in \mathcal{S}_{\bPi}(T)}{\sup} \left \vert \sum_{1\leq i<j \leq n} \left(\bPi_{ij} - \bX_{ij} \right) \left( \bTheta^*_{ij} \log \left(\frac{\widetilde \bTheta_{ij}}{\bTheta_{ij}} \right) + (1 - \bTheta^*_{ij}) \log \left(\frac{1 - \widetilde \bTheta_{ij}}{1 - \bTheta_{ij}} \right) \right)\right \vert\\
&=& \frac{1}{\gamma_n} \underset{\bTheta\in \mathcal{S}_{\bPi}(T)}{\sup} \left \vert \sum_{1 \leq i < j \leq n} f^{\bTheta}_{ij}(\bX_{ij}) \right \vert
\end{eqnarray*}
where we set $f^{\bTheta}_{ij}(\bX_{ij}) \triangleq \gamma_n\left(\bPi_{ij} - \bX_{ij} \right) \left( \bTheta_{ij}^* \log \left(\frac{\widetilde \bTheta_{ij}}{\bTheta_{ij}} \right) + (1 - \bTheta_{ij}^*) \log \left(\frac{1 - \widetilde \bTheta_{ij}}{1 - \bTheta_{ij}} \right) \right)$. Assuming that $\gamma_n \leq 1 - \rho_n$, $x \rightarrow \log(x)$ and $x \rightarrow \log(1-x)$ are $\frac{1}{\gamma_n}$ - Lipshitz on $[\gamma_n, \rho_n]$ so 
$$\left\vert \bTheta_{ij}^* \log \left(\frac{\widetilde \bTheta_{ij}}{\bTheta_{ij}} \right) + (1 - \bTheta_{ij}^*) \log \left(\frac{1 - \widetilde \bTheta_{ij}}{1 - \bTheta_{ij}} \right) \right\vert \leq\bTheta^* \frac{\left \vert \bTheta_{ij} - \widetilde \bTheta_{ij}\right \vert}{\gamma_n} + (1 - \bTheta^*) \frac{\left \vert \bTheta_{ij} - \widetilde \bTheta_{ij}\right \vert}{\gamma_n} \leq \frac{\left \vert \bTheta_{ij} - \widetilde \bTheta_{ij}\right \vert}{\gamma_n}$$
which implies that for any $1\leq i < j \leq n$, $\left\vert f^{\bTheta}_{ij}(\bX_{ij})\right\vert \leq 1$. Moreover for any $1\leq i < j \leq n$, $\mathbb{E}\left[f^{\bTheta}_{ij}(\bX_{ij})\right] = 0$ and $\mathbb{E} \left[\left(\bX_{ij} - \bPi_{ij}\right)^2 \right] \leq \bPi_{ij}$, hence
\begin{eqnarray*}
\underset{\bTheta \in \mathcal{S}_{\bPi}(T)}{\sup} \underset{1\leq i<j \leq n}{\sum} \mathbb{E} \left[f^{\bTheta}_{ij}(X_{ij})^2\right] &\leq& \gamma_n^2\underset{\bTheta \in \mathcal{S}_{\bPi}(T)}{\sup} \underset{1\leq i \leq n}{\sum}\bPi_{ij}\left( \bTheta_{ij}^* \log \left(\frac{\widetilde \bTheta_{ij}}{\bTheta_{ij}} \right) + (1 - \bTheta_{ij}^*) \log \left(\frac{1 - \widetilde \bTheta_{ij}}{1 - \bTheta_{ij}} \right) \right)^2\\
&\leq& \gamma_n^2 \underset{\bTheta \in \mathcal{S}_{\bPi}(T)}{\sup} \underset{1\leq i \leq n}{\sum}\bPi_{ij} \frac{1}{\gamma_n^2}\left( \bTheta_{ij} - \widetilde\bTheta_{ij}\right)^2\\
&\leq& T.
\end{eqnarray*}
Then, Theorem \ref{Bousquet} implies
\begin{eqnarray*}
\mathbb{P} \left(\gamma_n \widetilde Z_T > \gamma_n\mathbb{E}[ \widetilde Z_T] + \frac{x}{3} + \sqrt{2x(2\gamma_n\mathbb{E}[ \widetilde Z_T] + T)} \right) \leq \exp(-x) \\
\mathbb{P} \left( \widetilde Z_T> \mathbb{E}[ \widetilde Z_T] + \frac{ x}{3\gamma_n} + \frac{ 4x}{\gamma_n} + \mathbb{E}[ \widetilde Z_T] + \frac{2x\times 4 \times 64^2\rho_n}{\gamma_n^2} + \frac{T}{4 \times 64^2\rho_n} \right) \leq \exp(-x)\\
\mathbb{P} \left( \widetilde Z_T> 2 \mathbb{E}[ \widetilde Z_T] + \frac{ 9\times 64^2 x\rho_n}{\gamma_n^2} + \frac{T}{4 \times 64^2\rho_n} \right) \leq \exp(-x)
\end{eqnarray*}
where we have used $\sqrt{a+b} \leq \sqrt{a} + \sqrt{b}$, $\sqrt{ab} \leq a + b$ and $\frac{\rho_n}{\gamma_n}\geq 1$. Setting $x = \frac{T\gamma_n^2}{9\times 64^2 \times 4 \times 64^2 \rho_n^2}$ yields the desired result.

\subsubsection{Proof of Lemma \ref{Lemma:esperanceKlPi}}

In Lemma \ref{Lemma:esperanceKlPi}, we bound
\begin{eqnarray*}
\mathbb{E}\left[ \widetilde  Z_{T}\right] &=& \mathbb{E} \left[ \underset{\bTheta\in \mathcal{S}_{\bPi}(T)}{\sup} \left \vert \sum_{1\leq i<j \leq n} \left(\bPi_{ij} - \bX_{ij} \right) \left( \bTheta_{ij}^* \log \left(\frac{\widetilde \bTheta_{ij}}{\bTheta_{ij}} \right) + (1 - \bTheta^*) \log \left(\frac{1 - \widetilde \bTheta_{ij}}{1 - \bTheta_{ij}} \right) \right)\right \vert\right].
\end{eqnarray*}
Let $(\bepsilon_{ij})_{1 \leq i<j\leq n}$ be a Rademacher sequence. We apply Lemma  \ref{Lemma:symmetrization} and get
\begin{eqnarray*}
\mathbb{E}\left[  \widetilde Z_{T}\right] &\leq & 2 \mathbb{E} \left[ \underset{\bTheta\in \mathcal{S}_{\bPi}(T)}{\sup} \left \vert \sum_{1\leq i<j \leq n} \bepsilon_{ij} \bX_{ij} \left( \bTheta_{ij}^* \log \left(\frac{\widetilde \bTheta_{ij}}{\bTheta_{ij}} \right) + (1 - \bTheta_{ij}^*) \log \left(\frac{1 - \widetilde \bTheta_{ij}}{1 - \bTheta_{ij}} \right) \right)\right \vert\right].
\end{eqnarray*}
For any $1 \leq i < j \leq n$, let $\phi_{ij}: x \rightarrow \frac{\gamma_n}{2\rho_n}\bX_{ij}\left(\bTheta^*_{ij} \log\left(\frac{\widetilde\bTheta_{ij} - x}{\widetilde\bTheta_{ij}}\right) + (1 - \bTheta^*_{ij}) \log\left(\frac{1 + x - \widetilde\bTheta_{ij}}{1 - \widetilde\bTheta_{ij}}\right) \right)$. Note that on $[\widetilde\bTheta_{ij} - \rho_n, \widetilde\bTheta_{ij} - \gamma_n]$, $\phi_{ij}$ is $1$-Lipschitz and vanishes at $0$. Then we apply Lemma \ref{Lemma:contraction} and compute 
\begin{eqnarray*}
\mathbb{E}\left[  \widetilde  Z_{T}\right] &\leq & \frac{4\rho_n}{\gamma_n} \mathbb{E} \left[\underset{\bTheta \in \mathcal{S}_{\bPi}(T)}{\sup} \left \vert \sum_{1 \leq i < j \leq n} \bepsilon_{ij}\phi_{ij} \left( \bX_{ij}\left( \bTheta_{ij} - \widetilde \bTheta_{ij} \right) \right)\right \vert\right]\\
&\leq & \frac{8\rho_n}{\gamma_n} \mathbb{E} \left[\underset{\bTheta \in \mathcal{S}_{\bPi}(T)}{\sup} \left \vert \sum_{1 \leq i < j \leq n} \bepsilon_{ij}\bX_{ij}\left( \bTheta_{ij} - \widetilde \bTheta_{ij} \right) \right \vert\right].
\end{eqnarray*}
Now, applying Lemma \ref{BornePdtScalairePi} with $\alpha = 1$ and $\bB= \bPi$ allows us to conclude that there exists an absolute constant $C>0$ such that $$\mathbb{E}\left[ \widetilde  Z_{T}\right] \leq \frac{T}{8\times 64^2\rho_n} + C\frac{\rho_n^3}{\gamma_n^2}(n\log(n) + k^2) \leq \frac{T}{8\times 64^2\rho_n} + C\frac{\rho_n^2}{\gamma_n^2}(n\log(n) + k^2).$$

 \subsection{Proof of Lemma \ref{ControlKlX}}
 \label{subsection:proofbad_event}
 
 The proof of Lemma \ref{ControlKlX} closely follows that of Lemma \ref{EqKl2KlPi}, and we only sketch it. Recall that $\epsilon_n \triangleq C\frac{\rho_n^2}{\gamma_n^2}\left(n\log(k) + k^2 \right)$ where the absolute constant $C$ is larger than the constant appearing in Lemma \ref{Lemma:ProbBlKLX}, and that $\epsilon^0 \triangleq \rho_n\epsilon_n$. We show that conditionally on $\bX$, the probability of the following ``bad" event is small and does not depend on $\bX$:
\[\mathcal{E}_{\bX} \triangleq \left\{ \exists \bTheta \in \mathcal{S}_{\bX}: \left \vert \Delta \cK^{\bTheta^*}_{\bX}(\bTheta, \widetilde\bTheta) - \Delta \cK^{\bA}_{\bX}(\bTheta, \widetilde \bTheta) \right \vert > \frac{1}{2\times 64 \rho_n} \left \Vert \bTheta - \bTheta \right \Vert_{2,\bX}^2 + \epsilon_n\right\}.\]
We slice $\mathcal{S}_{\bX}$ in the following sets $\mathcal{S}_{l,\bX} \triangleq \left \{\bTheta \in \mathcal{S}_{\bX}:  64^{l-1}\epsilon^0 \leq \left \Vert \bTheta - \widetilde\bTheta \right \Vert_{2,\bX}^2 \leq 64^l\epsilon^0\right \}$ and control the probability of the events 
$\mathcal{E}_{l,\bX} \triangleq \left\{ \exists \bTheta \in \mathcal{S}_{l,\bX}: \left \vert \Delta \cK^{\bTheta^*}_{\bX}(\bTheta, \widetilde\bTheta) - \Delta \cK^{\bA}_{\bX}(\bTheta, \widetilde \bTheta) \right \vert > \frac{64^{l}\epsilon^0 }{2\times 64^2\rho_n} + \epsilon_n \right\}.$
To do this, we control the probability of the events
$\mathcal{E}_{\bX}(T) = \left\{ \exists \bTheta \in \mathcal{S}_{\bX}(T): \left \vert \Delta \cK^{\bTheta^*}_{\bX}(\bTheta, \widetilde\bTheta) - \Delta \cK^{\bA}_{\bX}(\bTheta, \widetilde \bTheta) \right \vert > \frac{T}{2\times64^2\rho_n} +\epsilon_n \right\}$
where $\mathcal{S}_{\bX}(T) = \left \{\bTheta \in \mathcal{S}_{\bX}:  \left \Vert \bTheta - \widetilde\bTheta \right \Vert_{2,\bX}^2 \leq T \right \}.$
\begin{lem}\label{Lemma:ProbBlKLX}
 Let $Z_{T,\bX}=\underset{\bTheta\in \mathcal{S}_{\bX}(T)}{\sup} \left \vert \Delta \cK^{\bTheta^*}_{\bX}(\bTheta, \widetilde\bTheta) - \Delta \cK^{\bA}_{\bX}(\bTheta, \widetilde \bTheta) \right \vert$. There exists two absolute constants $C$, $C' >0$ such that
$\mathbb{P}^{\bX} \left( Z_{T,\bX} \geq \frac{T}{2\times64^2\rho_n} + C\frac{\rho_n^2}{\gamma_n^2} \left(n\log(k) + k^2 \right)\right) \leq 4\exp \left(-\frac{C'\gamma_n^2 T}{\rho_n^2}\right).$
 \end{lem}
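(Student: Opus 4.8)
The proof will follow the same three–step scheme as Lemma~\ref{Lemma:ProbBeKLPi}, with the roles of $\bX$ and $\bA$ exchanged: we work conditionally on $\bX$, using that given $\bX$ the variables $(\bA_{ij})_{i<j}$ are independent with mean $\bTheta^*_{ij}$, with $|\bA_{ij}-\bTheta^*_{ij}|\le1$ and $\mathbb{E}^{\bX}[(\bA_{ij}-\bTheta^*_{ij})^2]\le\bTheta^*_{ij}\le\rho_n$. First I would put $Z_{T,\bX}$ in a workable form. Expanding $\cK_{\bX}$ and cancelling common terms,
\[
\Delta\cK^{\bTheta^*}_{\bX}(\bTheta,\widetilde\bTheta)-\Delta\cK^{\bA}_{\bX}(\bTheta,\widetilde\bTheta)=\sum_{i<j}\bX_{ij}(\bTheta^*_{ij}-\bA_{ij})\,g_{ij}(\bTheta),\qquad g_{ij}(\bTheta)\triangleq\log\frac{\widetilde\bTheta_{ij}}{\bTheta_{ij}}-\log\frac{1-\widetilde\bTheta_{ij}}{1-\bTheta_{ij}}.
\]
Since $\gamma_n\le1-\rho_n$, both $x\mapsto\log x$ and $x\mapsto\log(1-x)$ are $\tfrac1{\gamma_n}$-Lipschitz on $[\gamma_n,\rho_n]$, so on $\mathcal{S}_{\bX}(T)$ we have $|g_{ij}(\bTheta)|\le\tfrac2{\gamma_n}|\bTheta_{ij}-\widetilde\bTheta_{ij}|\le\tfrac{2\rho_n}{\gamma_n}$; hence, conditionally on $\bX$, the summands $\bX_{ij}(\bTheta^*_{ij}-\bA_{ij})g_{ij}(\bTheta)$ are independent, centered, bounded by $\tfrac{2\rho_n}{\gamma_n}$ in absolute value, and their conditional second moments sum to at most $\tfrac{4\rho_n}{\gamma_n^2}\|\bTheta-\widetilde\bTheta\|_{2,\bX}^2\le\tfrac{4\rho_nT}{\gamma_n^2}$.

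Second I would establish the conditional analogue of Lemma~\ref{ConcentrationKlPi}: rescaling by $\tfrac{\gamma_n}{2}$ so the summands take values in $[-\rho_n,\rho_n]\subset[-1,1]$ and applying Bousquet's inequality (Theorem~\ref{Bousquet}) conditionally on $\bX$, then using $\sqrt{a+b}\le\sqrt a+\sqrt b$, $2\sqrt{ab}\le a+b$, and $\gamma_n\le\rho_n\le1$, and choosing the deviation parameter proportional to $T\gamma_n^2/\rho_n^2$ so that all residual error terms are $O(T/\rho_n)$, one obtains an absolute constant $C'>0$ with
\[
\mathbb{P}^{\bX}\!\left(Z_{T,\bX}>2\,\mathbb{E}^{\bX}[Z_{T,\bX}]+\frac{T}{4\times 64^2\rho_n}\right)\le\exp\!\left(-\frac{C'\gamma_n^2T}{\rho_n^2}\right).
\]

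Third I would bound $\mathbb{E}^{\bX}[Z_{T,\bX}]$, the conditional analogue of Lemma~\ref{Lemma:esperanceKlPi}. Symmetrization (Lemma~\ref{Lemma:symmetrization}) over the $\bA_{ij}-\bTheta^*_{ij}$ introduces a Rademacher sequence $(\bepsilon_{ij})$ and replaces $\bTheta^*_{ij}-\bA_{ij}$ by $\bepsilon_{ij}\bA_{ij}$; then, because the map obtained from $g_{ij}$ by shifting its argument by $\widetilde\bTheta_{ij}$ is $O(1/\gamma_n)$-Lipschitz and vanishes at $0$, Talagrand's contraction principle (Lemma~\ref{Lemma:contraction}) reduces the task, up to a factor $C/\gamma_n$, to bounding $\mathbb{E}^{\bX}\bigl[\sup_{\bTheta\in\mathcal{S}_{\bX}(T)}\bigl|\sum_{i<j}\bepsilon_{ij}\bX_{ij}\bA_{ij}(\bTheta_{ij}-\widetilde\bTheta_{ij})\bigr|\bigr]$. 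I would bound this last quantity by Lemma~\ref{BornePdtScalairePi} applied with $\bB=\bX$ and $\bSigma_{ij}=\bepsilon_{ij}\bX_{ij}\bA_{ij}$: conditionally on $\bX$ these entries are independent and centered (the Rademacher signs are symmetric and independent of $\bA$), $\|\bSigma\|_\infty\le1$, and $\sum_{i<j}\mathbb{E}^{\bX}[\bSigma_{ij}^2\bTheta_{ij}^2]=\sum_{i<j}\bX_{ij}\bTheta^*_{ij}\bTheta_{ij}^2\le\rho_n\|\bTheta\|_{2,\bX}^2$, so the hypothesis holds with $\alpha=\rho_n$, giving $\mathbb{E}^{\bX}[Z_{T,\bX}]\le\tfrac{T}{8\times 64^2\rho_n}+C\tfrac{\rho_n^2}{\gamma_n^2}(n\log k+k^2)$.

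Combining this with the concentration bound of the second step, and collecting the deviation contributions (the Bousquet estimate together with the union–bound/Bernstein estimates internal to Lemma~\ref{BornePdtScalairePi}, which correspond to a label–mismatch term and a Bernoulli–fluctuation term), yields the stated bound with tail $4\exp(-C'\gamma_n^2T/\rho_n^2)$. The whole estimate is independent of the realization of $\bX$ because the sup-norm bound $\tfrac{2\rho_n}{\gamma_n}$, the variance proxy $\tfrac{4\rho_nT}{\gamma_n^2}$, the parameter $\alpha=\rho_n$, and the covering numbers of the subspaces $\mathcal{T}_z$ are all $\bX$-free. The step I expect to be delicate is precisely the constant bookkeeping in this assembly: one must choose the Bousquet deviation parameter and the normalisations in the symmetrization/contraction step so that all stray error terms collapse into exactly $\tfrac{T}{2\times 64^2\rho_n}+C\tfrac{\rho_n^2}{\gamma_n^2}(n\log k+k^2)$, while correctly tracking that here — unlike in Lemma~\ref{Lemma:ProbBeKLPi} — the variance proxy of the underlying noise $\bA$ is $\rho_n$ rather than $1$, which is what produces the parameter $\alpha=\rho_n$ in Lemma~\ref{BornePdtScalairePi} and the extra factor $\rho_n$ relative to the naive Lipschitz constant $1/\gamma_n$.
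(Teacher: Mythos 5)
Your proposal is correct and reproduces the paper's overall scheme: the same expansion of $\Delta \cK^{\bTheta^*}_{\bX}-\Delta \cK^{\bA}_{\bX}$ as a sum of conditionally centered terms $\bX_{ij}(\bTheta^*_{ij}-\bA_{ij})g_{ij}(\bTheta)$, a split into a concentration step plus a bound on $\mathbb{E}^{\bX}[Z_{T,\bX}]$, and, for the expectation, exactly the paper's argument (symmetrization, contraction with the shifted-logarithm functions of Lipschitz constant of order $1/\gamma_n$, then Lemma~\ref{BornePdtScalairePi} with $\bB=\bX$, $\bSigma_{ij}=\bepsilon_{ij}\bX_{ij}\bA_{ij}$ and $\alpha=\rho_n$, which is the key point where the variance proxy $\rho_n$ of $\bA$ enters).

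The one genuine difference is the concentration step. The paper (Lemma~\ref{Lemma:concentrationKlX}) does not reuse Bousquet here: it writes $Z_{T,\bX}=f(\bA)$, checks that $f$ is convex and Lipschitz with constant $\sqrt{2T}/\gamma_n$ in the Euclidean norm of $\bA$, and applies Talagrand's inequality for convex Lipschitz functions (Theorem~\ref{Talagrand}); this is where the factor $4$ in the tail and the extra additive term $C\rho_n/\gamma_n^2$ come from, the latter being absorbed later using $n\rho_n\to\infty$. You instead apply Bousquet's inequality (Theorem~\ref{Bousquet}) conditionally on $\bX$, mirroring Lemma~\ref{ConcentrationKlPi}: after rescaling by $\gamma_n/2$ the summands are bounded by $\rho_n\le 1$, conditionally centered, with variance proxy $\rho_n T$, and taking $x\asymp \gamma_n^2 T/\rho_n^2$ makes all residual terms at most a small multiple of $T/\rho_n$ (using $\gamma_n\le\rho_n\le1$), which indeed yields $\mathbb{P}^{\bX}\bigl(Z_{T,\bX}>2\mathbb{E}^{\bX}[Z_{T,\bX}]+\tfrac{T}{4\times 64^2\rho_n}\bigr)\le\exp(-C'\gamma_n^2T/\rho_n^2)$. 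I verified this works; it even gives a slightly cleaner statement (a one-sided tail with constant $1$ rather than $4$, and no $C\rho_n/\gamma_n^2$ remainder, so no need for the final remark that $n>1/\rho_n$). One small bookkeeping remark: the Bernstein/union-bound estimates you mention are already integrated inside the expectation bound of Lemma~\ref{BornePdtScalairePi}, so they do not contribute separate terms to the final tail probability — the only tail contribution is the concentration estimate itself; this only makes your constants smaller and does not affect the validity of the stated bound.
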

 \begin{proof}
 To prove Lemma \ref{Lemma:ProbBlKLX}, we first show that $Z_{T,\bX}$ concentrates around its expectation and then bound this term. 
\begin{lem}\label{Lemma:concentrationKlX}
Let $Z_{T,\bX}$ be as in Lemma \ref{Lemma:ProbBlKLX}, then there exists two absolute constants $C$, $C'>0$ such that 
\begin{align*}
\mathbb{P}^{\bX} \left( \Big \vert Z_{T,\bX} - \mathbb{E}^{\bX} (Z_{T,\bX}) \Big \vert > \frac{C\rho_n}{\gamma_n^2} + \frac{T}{4 \times 64^2\rho_n}\right)\leq 4 \exp\left(-\frac{C'T\gamma_n^2}{\rho_n^2} \right).
\end{align*}
 \end{lem}
 \begin{lem}\label{Lemma:esperanceKLX}
Let $Z_{T,\bX}$ be as in Lemma \ref{Lemma:ProbBlKLX}, then there exists an absolute constant $C>0$ such that 
\begin{align*}
\mathbb{E}^{\bX} \left[Z_{T,\bX}\right] \leq \frac{T}{4 \times 64^2 \rho_n} + \frac{C\rho_n^2}{\gamma_n^2}\left(n\log(k) + k^2 \right).
\end{align*}
 \end{lem}
 Putting together Lemma \ref{Lemma:concentrationKlX} and Lemma \ref{Lemma:esperanceKLX}, we get that 
 $$\mathbb{P}^{\bX} \left( Z_{T,\bX} \geq \frac{T}{2\times64^2\rho_n} + C\frac{\rho_n^2}{\gamma_n^2} \left(n\log(k) +\frac{1}{\rho_n} +k^2 \right)\right) \leq 4\exp \left(-\frac{C'\gamma_n^2 T}{\rho_n^2}\right).$$
 If $n\rho_n \rightarrow \infty$, for $n$ large enough $n > \frac{1}{\rho_n}$. This yields the desired result.
 \end{proof}
 We combine Lemma \ref{Lemma:ProbBlKLX} and the arguments developed in Lemma \ref{EqNPiNX}, and note that $\mathbb{P}^{\bX}\left(\cE_{\bX} \right)$ does not depend on $\bX$ to conclude the proof of Lemma \ref{ControlKlX}.
 
 \subsubsection{Proof of Lemma \ref{Lemma:concentrationKlX}}
  \label{subsection:proofConcentration}
 
 In this Section, we prove the Lemma \ref{Lemma:concentrationKlX} that helps us bound $\left \vert Z_{T,\bX} - \mathbb{E}^{\bX} (Z_{T,\bX}) \right \vert$ with hight probability. To prove that $Z_{T,\bX}$ concentrates around its mean, we use the following version of Talagrand's Theorem for Lipschitz convex functions (for a proof, see Theorem 3.3 of \cite{chatterjee2015}).
\begin{thm}\label{Talagrand}
Suppose that $f: [-1, 1]^{N} \rightarrow \mathbb{R}$ is a convex Lipschitz function with Lipschitz constant $L$. Let $R_1,..., R_N$ be independent random variables taking value in $[-1,1]$. Let $Z:= f(R_1,...,R_N)$. Then for any $t \geq 0$,
\begin{align*}
\mathbb{P} \left( \left \vert Z - \mathbb{E} (Z) \right \vert > 16L + t \right)\leq 4 e^{\left(\frac{-t^2}{2L^2} \right)} .
\end{align*}
\end{thm}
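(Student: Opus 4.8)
This is a classical fact, and I would deduce it from Talagrand's convex distance inequality for product measures in two stages: (i) first establish sub‑Gaussian concentration of $Z$ around a \emph{median} $M$, exploiting that the sublevel sets of a convex function are convex; (ii) then transfer to the mean, using that $|\mathbb{E}Z-M|$ is only $O(L)$ — which is precisely what the additive slack $16L$ in the statement is there to absorb.

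\textbf{The convex‑distance step.} For a product probability measure on $[-1,1]^N$ and a measurable set $A$, write $d_T(x,A)=\sup\{\inf_{y\in A}\sum_{i:\,x_i\neq y_i}\alpha_i:\ \alpha\in\mathbb{R}^N_{\ge 0},\ \|\alpha\|_2\le 1\}$ for Talagrand's convex distance; Talagrand's inequality states $\mathbb{P}(A)\,\mathbb{P}\big(d_T(\cdot,A)\ge s\big)\le e^{-s^2/4}$. The geometric key is the following: \emph{if $A$ is convex, then} $\mathrm{dist}_2(x,A)\le 2\,d_T(x,A)$. Indeed, by Sion's minimax theorem (the payoff $\langle\alpha,v\rangle$ is bilinear and both feasible sets are convex, one of them compact) one has $d_T(x,A)=\inf\{\|v\|_2:\ v\in\overline{\mathrm{conv}}\{(\mathds{1}[x_i\neq y_i])_i:\ y\in A\}\}$; writing a near‑minimiser as $v=\sum_j\lambda_j u^{(y^{(j)})}$ with $y^{(j)}\in A$ and setting $\bar y=\sum_j\lambda_j y^{(j)}\in A$ (this is where convexity of $A$ enters), one gets $|x_i-\bar y_i|\le\sum_j\lambda_j|x_i-y^{(j)}_i|\le 2\sum_j\lambda_j\mathds{1}[x_i\neq y^{(j)}_i]=2v_i$, since coordinates in $[-1,1]$ differ by at most $2$, hence $\|x-\bar y\|_2\le 2\|v\|_2$. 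Now convexity and $L$‑Lipschitzness of $f$ give $f(x)\le f(y)+L\|x-y\|_2$ for all $x,y$; applied with the convex sublevel set $A_m=\{f\le m\}$ this yields $f(x)\le m+2L\,d_T(x,A_m)$, and therefore, for every level $m$ with $\mathbb{P}(A_m)>0$,
\[
\mathbb{P}(Z\ge m+s)\ \le\ \mathbb{P}\Big(d_T(\cdot,A_m)\ge \tfrac{s}{2L}\Big)\ \le\ \frac{e^{-s^2/(16L^2)}}{\mathbb{P}(A_m)}.
\]

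\textbf{From the median to the mean, and the main obstacle.} Let $M$ be a median of $Z$. Taking $m=M$ gives the upper tail $\mathbb{P}(Z\ge M+s)\le 2e^{-s^2/(16L^2)}$. For the lower tail — the one genuinely non‑routine point, because $\{f\ge M\}$ need not be convex — I would instead use the \emph{moving} sublevel set $m=M-s$: the displayed bound gives $\{Z\ge M\}\subseteq\{d_T(\cdot,A_{M-s})\ge s/(2L)\}$, so $\tfrac12\le\mathbb{P}(Z\ge M)\le e^{-s^2/(16L^2)}/\mathbb{P}(A_{M-s})$, whence $\mathbb{P}(Z\le M-s)\le 2e^{-s^2/(16L^2)}$. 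Combining, $\mathbb{P}(|Z-M|\ge s)\le 4e^{-s^2/(16L^2)}$, and integrating the tail, $|\mathbb{E}Z-M|\le\mathbb{E}|Z-M|=\int_0^\infty\mathbb{P}(|Z-M|>s)\,ds\le 8\sqrt{\pi}\,L<16L$; hence $\mathbb{P}(|Z-\mathbb{E}Z|>16L+t)\le\mathbb{P}(|Z-M|>t)\le 4e^{-t^2/(16L^2)}$, which establishes the statement up to the value of the constant in the exponent. To land on exactly $2L^2$, I would replace the convex‑distance inequality in the second step by Talagrand's sharper convexity‑based concentration inequality (the exponential‑Sobolev / inf‑convolution argument, which for coordinates confined to an interval of length $2$ gives sub‑Gaussian constant $L^2$ rather than the crude $8L^2$), exactly as invoked in \cite{chatterjee2015}; the two‑stage structure and the treatment of the lower deviation via a moving sublevel set are unchanged. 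Thus the main obstacle is not the overall architecture but securing the \emph{sharp} sub‑Gaussian constant: the elementary convex‑distance route reproduces the correct form of the bound, and matching the Gaussian‑optimal value requires the refined inequality.
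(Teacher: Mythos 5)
The paper does not prove this statement at all: Theorem~\ref{Talagrand} is imported verbatim, with a citation to Theorem~3.3 of \cite{chatterjee2015}, and is then applied as a black box in the proof of Lemma~\ref{Lemma:concentrationKlX}. So there is no in-paper argument to compare against; what you have written is a genuine reconstruction of the classical proof. Your architecture is the standard one and is sound: Talagrand's convex distance inequality, the minimax identification of $d_T(x,A)$ with $\inf\{\Vert v\Vert_2: v\in\overline{\mathrm{conv}}\{(\mathds{1}[x_i\neq y_i])_i: y\in A\}\}$, the comparison $\mathrm{dist}_2(x,A)\leq 2\,d_T(x,A)$ for convex $A$ when coordinates range over an interval of length $2$, the moving sublevel set $A_{M-s}$ for the lower tail (correctly identified as the non-routine point, since $\{f\geq M\}$ is not convex), and the transfer from median to mean via $\vert\mathbb{E}Z-M\vert\leq 8\sqrt{\pi}\,L<16L$, which is exactly what the additive $16L$ in the statement is for. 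All of these steps check out.

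The one genuine shortfall is the constant in the exponent. Your elementary route delivers $\mathbb{P}(\vert Z-\mathbb{E}Z\vert>16L+t)\leq 4e^{-t^2/(16L^2)}$, whereas the statement asserts $4e^{-t^2/(2L^2)}$, and your final sentence -- invoking an unspecified ``sharper convexity-based concentration inequality'' to upgrade $16L^2$ to $2L^2$ -- is a placeholder rather than a proof. Note that for $[-1,1]$-valued coordinates the factor $16$ is what the convex-distance method naturally produces (a factor $4$ from Talagrand's $e^{-s^2/4}$ and a factor $4$ from the length-$2$ rescaling), so closing this gap really does require a different inequality, not just more careful bookkeeping; as written, you have proved a weaker theorem than the one stated. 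That said, the discrepancy is immaterial for this paper: Theorem~\ref{Talagrand} is used only once, in Section~\ref{subsection:proofConcentration} with $L=\sqrt{2T}/\gamma_n$ and $t=T/(8\times 64^2\rho_n)$, where the exponent is immediately absorbed into an unspecified absolute constant $C'$, so the version with $16L^2$ would serve every downstream purpose equally well.
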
 
Recall that 
\begin{equation*}
\begin{split}
Z_{T, \bX} &=\underset{\bTheta\in \mathcal{S}_{\bX}(T)}{\sup} \left \vert \Delta \cK^{\bTheta^*}_{\bX}(\bTheta, \widetilde\bTheta) - \Delta \cK^{\bA}_{\bX}(\bTheta, \widetilde \bTheta) \right \vert\\
& = \underset{\bTheta\in \mathcal{S}_{\bX}(T)}{\sup}\left \vert \sum_{1\leq i<j \leq n}(\bX_{ij}\bA_{ij}-\bX_{ij}\bTheta_{ij}^*) \left( \log\left (\frac{\bTheta_{ij}}{\widetilde\bTheta_{ij}}\right ) - \log\left (\frac{1-\bTheta_{ij}}{1-\widetilde\bTheta_{ij}}\right )\right) \right \vert.
\end{split}
\end{equation*}
Note that $Z_{T,\bX} = f(\bA)$ where $f(\bR)$ is defined for $\bR \in [-1,1]^{\frac{(n-1)(n-2)}{2}}$ by 
\begin{equation*}
\begin{split}
f: \bR & \rightarrow \underset{\bTheta\in \mathcal{S}_{\bX}(T)}{\sup}\left \vert \sum_{1\leq i<j \leq n}\bX_{ij}(\bR_{ij}-\bTheta_{ij}^*) \left( \log\left (\frac{\bTheta_{ij}}{\widetilde\bTheta_{ij}}\right ) - \log\left (\frac{1-\bTheta_{ij}}{1-\widetilde\bTheta_{ij}}\right )\right) \right \vert.
\end{split}
\end{equation*}
 It is easy to see that $f$ is indeed convex. Our next step is to show that $f$ is Lipschitz. Let $ \bR, \bS \in [-1,1]^{\frac{(n-1)(n-2)}{2}}$,
\begin{eqnarray*}
\left \vert f(\bR) - f(\bS) \right \vert
&=& \Bigg \vert  \underset{\bTheta\in \mathcal{S}_{\bX}(T)}{\sup}  \left \vert \sum_{1\leq i<j \leq n}\bX_{ij}(\bR_{ij}-\bTheta_{ij}^*) \left( \log\left (\frac{\bTheta_{ij}}{\widetilde\bTheta_{ij}}\right ) - \log\left (\frac{1-\bTheta_{ij}}{1-\widetilde\bTheta_{ij}}\right )\right) \right \vert\\
&& - \underset{\bTheta\in \mathcal{S}_{\bX}(T)}{\sup} \left \vert \sum_{1\leq i<j \leq n}\bX_{ij}(\bS_{ij}-\bTheta_{ij}^*) \left( \log\left (\frac{\bTheta_{ij}}{\widetilde\bTheta_{ij}}\right ) - \log\left (\frac{1-\bTheta_{ij}}{1-\widetilde\bTheta_{ij}}\right )\right) \right \vert \Bigg\vert\\
&\leq& \underset{\bTheta\in \mathcal{S}_{\bX}(T)}{\sup}  \Bigg\vert  \left \vert \sum_{1\leq i<j \leq n}\bX_{ij}(\bR_{ij}-\bTheta_{ij}^*) \left( \log\left (\frac{\bTheta_{ij}}{\widetilde\bTheta_{ij}}\right ) - \log\left (\frac{1-\bTheta_{ij}}{1-\widetilde\bTheta_{ij}}\right )\right) \right \vert\\
&& - \left \vert \sum_{1\leq i<j \leq n}\bX_{ij}(\bS_{ij}-\bTheta_{ij}^*) \left( \log\left (\frac{\bTheta_{ij}}{\widetilde\bTheta_{ij}}\right ) - \log\left (\frac{1-\bTheta_{ij}}{1-\widetilde\bTheta_{ij}}\right )\right) \right \vert \Bigg\vert \\
& \leq & \underset{\bTheta\in \mathcal{S}_{\bX}(T)}{\sup} \left \vert \sum_{1\leq i<j \leq n}\bX_{ij}(\bR_{ij}-\bS_{ij}) \left( \log\left (\frac{\bTheta_{ij}}{\widetilde\bTheta_{ij}}\right ) - \log\left (\frac{1-\bTheta_{ij}}{1-\widetilde\bTheta_{ij}}\right )\right) \right \vert \\
& \leq & \underset{\bTheta\in \mathcal{S}_{\bX}(T)}{\sup}  \left \{ \sum_{1\leq i<j \leq n}\bX_{ij}\left \vert (\bR_{ij}-\bS_{ij}) \log\left (\frac{\bTheta_{ij}}{\widetilde\bTheta_{ij}}\right) \right \vert + \bX_{ij}\left \vert (\bR_{ij}-\bS_{ij}) \log\left (\frac{1-\bTheta_{ij}}{1-\widetilde\bTheta_{ij}}\right)\right \vert \right \} \\
& \leq & \underset{\bTheta\in \mathcal{S}_{\bX}(T)}{\sup}  \left \Vert \bR - \bS \right \Vert_{2} \left( \sum_{1\leq i<j \leq n}\bX_{ij}\left(\log\left (\frac{\bTheta_{ij}}{\widetilde\bTheta_{ij}}\right) \right)^2 + \bX_{ij}\left(\log\left (\frac{1-\bTheta_{ij}}{1-\widetilde\bTheta_{ij}}\right)\right)^2\right)^{\frac{1}{2}}
\end{eqnarray*}
where we have used that $\bX \in \{0,1\}^{n\times n}$. Thus $f$ is Lipschitz with Lipschitz constant $$ \underset{\bTheta\in \mathcal{S}_{\bX}(T)}{\sup} \left( \sum_{1\leq i<j \leq n}\bX_{ij}\left(\log\left (\frac{\bTheta_{ij}}{\widetilde\bTheta_{ij}}\right) \right)^2 + \bX_{ij}\left(\log\left (\frac{1-\bTheta_{ij}}{1-\widetilde\bTheta_{ij}}\right)\right)^2\right)^{\frac{1}{2}}.$$
As stated before, assuming that $\gamma_n \leq 1-\rho_n$, $x \rightarrow \log(x)$ and $x \rightarrow \log(1-x)$ are Lipschitz functions on $[\gamma_n, \rho_n]$ with Lipschitz constant $\gamma_n^{-1}$. Thus $f$ is Lipschitz with Lipschitz constant
$$ \underset{\bTheta\in \mathcal{S}_{\bX}(T)}{\sup} \left( \sum_{1\leq i<j \leq n}\bX_{ij}\left(\frac{\left \vert \bTheta_{ij} - \widetilde\bTheta_{ij} \right \vert }{\gamma_n}\right)^2 + \bX_{ij}\left(\frac{\left \vert \bTheta_{ij} - \widetilde\bTheta_{ij} \right \vert }{\gamma_n}\right)^2\right)^{\frac{1}{2}}.$$
This implies
\begin{eqnarray*}
\left \vert f(\bR) - f(\bS) \right \vert
& \leq &\left \Vert \bR - \bS \right \Vert_2 \underset{\bTheta\in \mathcal{S}_{\bX}(T)}{\sup}  \frac{\sqrt{2} \left \Vert \widetilde\bTheta - \bTheta \right \Vert_{2, \bX}}{\gamma_n} \leq  \left \Vert \bR - \bS \right \Vert_2 \frac{\sqrt{2T}}{\gamma_n}.
\end{eqnarray*}
We have shown that $f$ has a Lipschitz constant $L = \frac{\sqrt{2T}}{\gamma_n}$. Applying Theorem \ref{Talagrand} for $t = \frac{T}{8 \times 64^2 \rho_n}$, we get
\begin{align*}
\mathbb{P}^{\bX} \left( \Big \vert Z_{T,\bX} - \mathbb{E} (Z_{T, \bX}) \Big \vert > \frac{16\sqrt{2 T}}{\gamma_n} + \frac{T}{8 \times 64^2\rho_n} \right)\leq 4 \exp\left(\frac{-T\gamma_n^2}{8^2 \times 2 \times 64^4 \rho_n^2} \right).
\end{align*}
Using for $\beta >0$, $2\sqrt{ab} \leq \beta a^2 + b^2/\beta$  yields
\begin{align*}
\mathbb{P}^{\bX} \left( \Big\vert Z_{T, \bX} - \mathbb{E} (Z_{T, \bX}) \Big\vert > \frac{8^2 \times 64^2 \times 16 \rho_n}{\gamma_n^2} + \frac{T}{8 \times 64^2\rho_n}+ \frac{T}{8 \times 64^2\rho_n}\right)\leq 4 \exp\left(\frac{-T\gamma_n^2}{4 \times 4^2 \times 32^4 \rho_n^2} \right).
 \end{align*}
 
 This concludes the proof of Lemma \ref{Lemma:concentrationKlX}.

\subsection{Proof of Lemma \ref{Lemma:esperanceKLX}}
 \label{subsection:proofEsperance}
 Once we have shown that $Z_{T,\bX}$ concentrates around its mean, we bound $\mathbb{E} \left[ Z_{T, \bX} \right]$. To do so, we follow the steps of Lemma \ref{Lemma:esperanceKlPi}.
  Let ${\bepsilon}_{1\leq i<j \leq n}$ a Rademacher sequence. Applying Lemma \ref{Lemma:symmetrization}, we get
  \begin{equation*}
\begin{split}
\mathbb{E}^{\bX} \left[Z_{T,\bX}\right]& = \mathbb{E}^{\bX}\left[ \underset{\bTheta\in \mathcal{S}_{\bX}(T)}{\sup}\left \vert \sum_{1\leq i<j \leq n}\bX_{ij}(\bA_{ij}-\mathbb{E}[\bA_{ij}] ) \left( \log\left (\frac{\bTheta_{ij}}{\widetilde\bTheta_{ij}}\right ) - \log\left (\frac{1-\bTheta_{ij}}{1-\widetilde\bTheta_{ij}}\right )\right) \right \vert \right] \\
& \leq 2 \mathbb{E}^{\bX} \left[ \underset{\bTheta\in \mathcal{S}_{\bX}(T)}{\sup}\left \vert \sum_{1\leq i<j \leq n}\bX_{ij}\bepsilon_{ij}\bA_{ij} \left( \log\left (\frac{\bTheta_{ij}}{\widetilde\bTheta_{ij}}\right ) - \log\left (\frac{1-\bTheta_{ij}}{1-\widetilde\bTheta_{ij}}\right )\right) \right \vert \right] .
\end{split}
\end{equation*}
For any $1 \leq i < j \leq n$, let $\phi_{ij}: x \rightarrow \frac{1}{2}\gamma_n\bX_{ij}\bA_{ij}\left( \log\left(\frac{\widetilde\bTheta_{ij} - x}{\widetilde\bTheta_{ij}}\right) - \log\left(\frac{1 + x - \widetilde\bTheta_{ij}}{1 - \widetilde\bTheta_{ij}}\right) \right)$. Note that $\phi_{ij}$ is $1$-Lipschitz and vanishes at $0$ on the interval $[\widetilde\bTheta_{ij} - \rho_n, \widetilde\bTheta_{ij} - \gamma_n]$. Indeed,
\begin{eqnarray*}
\phi_i(x)' &=&  \frac{1}{2}\gamma_n\bX_{ij}\bA_{ij}\left( \frac{-1}{\widetilde\bTheta_{ij} - x} - \frac{1}{1 + x - \widetilde\bTheta_{ij}}\right)\\
&\leq& \bX_{ij}\bA_{ij}.
\end{eqnarray*}
 By definition of the functions $\phi_{ij}$, $$\mathbb{E}^{\bX}\left[Z_{T,\bX} \right] \leq \frac{4}{\gamma_n} \mathbb{E}^{\bX} \left[ \underset{\bTheta \in S_{\bX}(T)}{\sup}\left \vert \underset{i<j}{\sum}\bepsilon_{ij}\phi_{ij}(\bX_{ij}\bA_{ij}(\widetilde\bTheta_{ij}- \bTheta_{ij}))  \right \vert\right].$$ We apply Lemma \ref{Lemma:contraction} to get
 \begin{equation}\label{eq:EZX}
\mathbb{E}^{\bX} \left[ Z_{T,\bX}\right] \leq \frac{8}{\gamma_n} \mathbb{E}^{\bX} \left[ \underset{\bTheta \in S_{\bX}(T)}{\sup}\left \vert \underset{i<j}{\sum}\bX_{ij}\bepsilon_{ij}\bA_{ij}(\widetilde \bTheta_{ij}- \bTheta_{ij}))  \right \vert\right].
\end{equation}
Next, we apply Lemma \ref{BornePdtScalairePi} with $\bB = \bX$, $\bSigma_{ij} = \bX_{ij}\bA_{ij}\bepsilon_{ij}$ and $\alpha = \rho_n$. Note that $\left\Vert \bSigma \right\Vert_{\infty}\leq 1 $ and that for any matrix $\bTheta$, $\underset{i<j}{\sum}\mathbb{E}^{\bX} \left[\bX_{ij}^2\bepsilon_{ij}^2\bTheta_{ij}^2\right] \leq \rho_n\left \Vert \bTheta \right \Vert_{2,\bX}^2.$ Combining Lemma \ref{BornePdtScalairePi} and \eqref{eq:EZX} yields 
\begin{equation*}
\begin{split}
\mathbb{E}^{\bX} \left[ Z_{T,\bX}\right]& \leq \frac{8}{\gamma_n} \times \left(\frac{T\gamma_n}{32\times 64^2\rho_n} + C\frac{\rho_n^2}{\gamma_n}\left(n\log(k) + k^2\right)\right).
\end{split}
\end{equation*}
This concludes the proof of Lemma \ref{Lemma:esperanceKLX}.
 
\subsection{Proof of Theorem \ref{thm:AdaptEst}}

Our proof relies on two steps: first, we show that with high probability, $\widehat{d}$ is close to its expected value, which belongs to $[\gamma_n, \rho_n]$. More precisely, let $\underline{\gamma_n} = \frac{C_{inf}}{2}\rho_n\log(n)^{\frac{-1}{5}}$ and $\underline{\rho_n} = \left( 1 + \frac{C_{inf}}{2}\right)\rho_n\log(n)^{\frac{1}{5}}$. We prove that with high probability, $\underline{\gamma_n} \leq \widehat{\gamma_n} \leq \gamma_n$ and $\rho_n \leq \widehat{\rho_n}  \leq \underline{\rho_n}$. Then this implies that the oracle matrix $\widetilde{\bTheta}$ belongs to the set of definition of our estimator and its likelihood is greater than that of $\widehat{\bTheta}$. Then both $\widetilde{\bTheta}$ and $\widehat{\bTheta}$ belong to the set $\left[\underline{\gamma_n}, \underline{\rho_n}\right]^{n \times n}$ and we adapt the proof of Theorem \ref{thm_matrix_oracle} to get the desired result.
\begin{lem}
Let $\cE = \left \{\widehat{\gamma_n} \in \left[ \underline{\gamma_n}, \gamma_n\right], \widehat{\rho_n} \in \left[\rho_n, \underline{\rho_n}\right] \right \}$. There exists a positive constant $C$ and an integer $N$, both depending only on $C_{inf}$, such that $\forall n \geq N$, $\mathbb{P}(\cE) \geq 1 - \exp(-Cn\rho_n)$.
\end{lem}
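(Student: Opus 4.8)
The plan is to prove that the event $\mathcal G:=\{\,|\widehat d-d|\le\tfrac{C_{inf}}{2}\rho_n\,\}$, where $d=\rho_n\int_0^1\int_0^1 W^*(x,y)\,dx\,dy$ is the expected node degree, has probability at least $1-\exp(-Cn\rho_n)$ for $n\ge N$, and that $\mathcal G\subset\mathcal E$ once $n$ is large. For the inclusion, recall that in the sparse graphon model $\bTheta^*_{ij}=\rho_n W^*(\zeta_i,\zeta_j)\in[C_{inf}\rho_n,\rho_n]$, so here $\gamma_n=C_{inf}\rho_n$ and $C_{inf}\rho_n\le d\le\rho_n$; hence on $\mathcal G$ we have $\widehat d\in[\tfrac{C_{inf}}{2}\rho_n,(1+\tfrac{C_{inf}}{2})\rho_n]$. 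Multiplying by $\log(n)^{-1/5}$ gives $\widehat{\gamma_n}\ge\tfrac{C_{inf}}{2}\rho_n\log(n)^{-1/5}=\underline{\gamma_n}$, while $\widehat{\gamma_n}\le(1+\tfrac{C_{inf}}{2})\rho_n\log(n)^{-1/5}\le C_{inf}\rho_n=\gamma_n$ as soon as $\log(n)^{1/5}\ge 2/C_{inf}$; symmetrically $\widehat{\rho_n}=\log(n)^{1/5}\widehat d$ satisfies $\widehat{\rho_n}\le(1+\tfrac{C_{inf}}{2})\rho_n\log(n)^{1/5}=\underline{\rho_n}$ and $\widehat{\rho_n}\ge\tfrac{C_{inf}}{2}\rho_n\log(n)^{1/5}\ge\rho_n$ under the same condition on $n$. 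Thus $\mathcal G\subset\mathcal E$ for all $n\ge N(C_{inf})$.

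To bound $\mathbb{P}(\mathcal G^c)$, I would decompose $\widehat d-d=(\widehat d-\bar p)+(\bar p-d)$ with $\bar p:=\tfrac1n\sum_{(i,j)\in\Omega}\rho_n W^*(\zeta_i,\zeta_j)=\mathbb{E}[\widehat d\mid\zeta]$, where $\zeta=(\zeta_1,\dots,\zeta_n)$. Conditionally on $\zeta$, $n\widehat d=\sum_{(i,j)\in\Omega}\bA_{ij}$ is a sum of $n$ independent Bernoulli variables with means in $[C_{inf}\rho_n,\rho_n]$ and total variance at most $n\rho_n$, so Bernstein's inequality (Theorem~\ref{thm:Bernstein}) gives $\mathbb{P}\big(|\widehat d-\bar p|\ge\tfrac{C_{inf}}{4}\rho_n\,\big|\,\zeta\big)\le 2\exp(-cn\rho_n)$, hence the same bound unconditionally. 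For the second term, $\bar p$ is a function of the independent coordinates $\zeta_1,\dots,\zeta_n$ with $\mathbb{E}[\bar p]=d$; changing one $\zeta_i$ changes $\bar p$ by at most $\rho_n\deg_\Omega(i)/n$, and since $\Omega$ may be fixed (independently of $\bA$) so that $\max_i\deg_\Omega(i)=O(1)$ — e.g. $\Omega$ an $n$-cycle on $[n]$ — one has $\sum_i(\rho_n\deg_\Omega(i)/n)^2=O(\rho_n^2/n)$, and McDiarmid's bounded-difference inequality yields $\mathbb{P}\big(|\bar p-d|\ge\tfrac{C_{inf}}{4}\rho_n\big)\le 2\exp(-c'n)$. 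Combining the two bounds and using $\rho_n=\omega(n^{-1})$ (so that $n\rho_n\to\infty$ and $e^{-c'n}\le e^{-c'n\rho_n}$), I obtain $\mathbb{P}(\mathcal G^c)\le\exp(-Cn\rho_n)$ for $n$ large enough, with $C$ depending only on $C_{inf}$, which completes the argument.

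The one genuinely delicate point is the control of $\bar p-d$: the variables $W^*(\zeta_i,\zeta_j)$ for $(i,j)\in\Omega$ are not independent, since distinct pairs may share a latent coordinate, so a direct Chernoff/Bernstein bound is unavailable. The bounded-difference route circumvents this, at the mild cost of requiring that $\Omega$ not be concentrated on a few indices; this is the only place the structure of $\Omega$ enters, and any spread-out choice with bounded $\Omega$-degrees works (for a uniformly random $\Omega$ one argues on the high-probability event that its maximum degree is $O(\log n)$). Everything else is a routine application of Bernstein's inequality conditionally on $\zeta$ together with the elementary arithmetic of the first paragraph.
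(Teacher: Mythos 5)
Your first paragraph (the arithmetic turning $\widehat d\in[\tfrac{C_{inf}}{2}\rho_n,(1+\tfrac{C_{inf}}{2})\rho_n]$ into $\widehat{\gamma_n}\in[\underline{\gamma_n},\gamma_n]$, $\widehat{\rho_n}\in[\rho_n,\underline{\rho_n}]$ for $n\ge N(C_{inf})$) and your Bernstein step conditionally on $\zeta$ are exactly what the paper does. The gap is in the second half: the detour through $d=\rho_n\int\int W^*$ is not only unnecessary, it does not cover the setting of the lemma. The paper only assumes that $\Omega$ is a set of $n$ pairs sampled independently of $\bA$, with no condition on its degrees. Your bounded-difference step needs $\max_i\deg_\Omega(i)$ to be $O(1)$ (or $O(\log n)$), and for a legitimate choice such as a star ($n$ pairs all containing one fixed node $i_0$), the quantity $\bar p=\tfrac1n\sum_{(i,j)\in\Omega}\rho_nW^*(\zeta_i,\zeta_j)$ genuinely does not concentrate around $d$ at scale $\rho_n$: conditionally on $\zeta_{i_0}$ its mean is $\rho_n\int W^*(\zeta_{i_0},y)\,dy$, which can differ from $d$ by a constant times $\rho_n$ with constant probability. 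So your event $\mathcal G$ can fail with constant probability while the lemma is still true, and your proof as written establishes the statement only under an extra structural hypothesis on $\Omega$ that the lemma does not grant you.

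The fix is to drop the comparison with $d$ altogether, which is the paper's route: since every entry satisfies $C_{inf}\rho_n\le\bTheta^*_{ij}\le\rho_n$, the conditional mean $\bar p=\tfrac1n\sum_{(i,j)\in\Omega}\bTheta^*_{ij}$ lies \emph{deterministically} in $[C_{inf}\rho_n,\rho_n]$, whatever $\Omega$ is. Hence a single application of Bernstein's inequality to $\sum_{(i,j)\in\Omega}(\bA_{ij}-\bTheta^*_{ij})$ (variance at most $n\rho_n$, deviation $\tfrac{C_{inf}}{2}\rho_n n$, taking $t=Cn\rho_n$ with $\sqrt{2C}+\tfrac{3C}{2}\le\tfrac{C_{inf}}{2}$) already puts $\widehat d$ in $[\tfrac{C_{inf}}{2}\rho_n,(1+\tfrac{C_{inf}}{2})\rho_n]$ with probability at least $1-2e^{-Cn\rho_n}$, and your first paragraph then concludes. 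In other words, the ``genuinely delicate point'' you identify is self-inflicted: the lemma never requires $\widehat d$ to be close to the population degree $d$, only to be close to \emph{some} point of $[C_{inf}\rho_n,\rho_n]$.
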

\begin{proof}
Note that $\left \Vert \bA - \bTheta^*\right \Vert_{\infty} \leq 1$ almost surely, and that for any $1 \leq i < j \leq n$, $(\bA_{ij} - \bTheta^*_{ij})$ is centered and has a variance smaller than $\rho_n$. Applying Bernstein's inequality \ref{thm:Bernstein}  yields
\begin{eqnarray*}
\mathbb{P} \left( \left \vert \sum_{(i,j) \in \Omega} \left(\bA_{ij} - \bTheta^*_{ij} \right)\right \vert \geq \sqrt{2tn\rho_n} + \frac{3t}{2}\right) &\leq & 2e^{-t},\ \forall t>0.
\end{eqnarray*}
Choosing $t = \rho_nn C$ with $C>0$ sucht that $\sqrt{2C} + \frac{3C}{2} \leq \frac{C_{inf}}{2}$ yields 
\begin{eqnarray*}
\mathbb{P} \left( \left \vert \widehat{d} - \frac{\sum_{(i,j) \in \Omega}\bTheta^*_{ij}}{n} \right \vert \geq \frac{C_{inf}}{2}\rho_n\right) &\leq & 2e^{-Cn\rho_n}.
\end{eqnarray*}
Note that  in the sparse graphon model \eqref{sparse_graphon}, when $0 < C_{inf} \triangleq \underset{(x,y) \in [0,1]^2}{\inf}W^*(x,y)$, we see that $\gamma_n = C_{inf}\rho_n$ and $\frac{\sum_{(i,j) \in \Omega}\bTheta^*_{ij}}{n} \in \left[\gamma_n, \rho_n \right] = \left[C_{inf}\rho_n, \rho_n \right]$. So, with probability greater than $1 - 2e^{-Cn\rho_n}$, $\widehat{A} \in \left[\frac{C_{inf}\rho_n}{2}, (1 +\frac{C_{inf}}{2}) \rho_n\right] $. Let $N$ be such that $\log(N)^{-\frac{1}{5}} \leq \frac{C_{inf}}{1 + \frac{C_{inf}}{2}}$ and $ \log(N)^{\frac{1}{5}} \geq 2C_{inf}^{-1}$. For any $n \geq N$, with probability greater than $1 - 2e^{-Cn\rho_n}$, $\widehat{\gamma_n} \in \left[ \underline{\gamma_n}, \gamma_n\right]$ and $\widehat{\rho_n} \in \left[\rho_n, \underline{\rho_n}\right]$.
\end{proof}

To prove Theorem \ref{thm:AdaptEst}, we work conditionally on the event $\cE$. Note that in the model \eqref{graphSeq}, the law of the remaining entries $\left(A_{i,j} \right)_{(i,j) \not \in \Omega}$ is independent of $\cE$. Since on $\cE$ both $\widehat{\bTheta}$ and $\bTheta^*$ belong to the set $\left[ \underline{\gamma_n}, \underline{\rho_n}\right]$, we have

\begin{eqnarray}\label{omega}
\left \Vert \widehat{\bTheta} - \bTheta^* \right \Vert_2^2 &=&  \sum_{(i,j) \in \Omega} \left(\widehat{\bTheta}_{ij} - \bTheta^*_{ij} \right)^2 + \sum_{(i,j) \not \in \Omega} \left(\widehat{\bTheta}_{ij} - \bTheta^*_{ij} \right)^2 \nonumber \\
&\leq&  n\underline{\rho_n^2}+ \sum_{(i,j) \not \in \Omega} \left(\widehat{\bTheta}_{ij} - \bTheta^*_{ij} \right)^2.
\end{eqnarray}

  We adapt the proof of Theorem \ref{thm_matrix_oracle} to bound the second term. Let $\underline{\epsilon_n} \triangleq C \left( \underline{\rho_n}\Big/\underline{\gamma_n}\right)^2 \left(n\log(k) + k^2 \right)$ where $C$ is the same absolute constant as in Theorem \ref{thm_matrix_oracle}, and let $\underline{\epsilon}^0 \triangleq \underline{\rho_n \epsilon_n}$. We start by considering the following two cases:

\textbf{Case 1}: $\sum_{(i,j) \not \in \Omega} \left(\widehat{\bTheta}_{ij} - \widetilde{\bTheta}_{ij} \right)^2 \leq \underline{\epsilon}^0$. Then, the statement of Theorem \ref{thm:AdaptEst} follows from \eqref{omega} and Lemma \ref{kl_frobenius}:
\begin{eqnarray*}
 \sum_{(i,j) \not \in \Omega} \left(\widehat{\bTheta}_{ij} - \bTheta^*_{ij} \right)^2 &\leq& 2\sum_{(i,j) \not \in \Omega} \left(\widehat{\bTheta}_{ij} - \widetilde{\bTheta}_{ij} \right)^2  + 2\sum_{(i,j) \not \in \Omega} \left(\widetilde{\bTheta}_{ij} - \bTheta^*_{ij} \right)^2\\
&\leq& 2\underline{\rho_n\epsilon_n} + 16 \underline{\rho_n} \mathcal{K}(\bTheta^{*}, \widetilde\bTheta)\\
&\leq& C\log(n)\rho_n\left( \mathcal{K}(\bTheta^{*}, \widetilde\bTheta)+n\log(k) + k^2\right).
\end{eqnarray*}

\textbf{Case 2}: $\sum_{(i,j) \not \in \Omega} \left(\widehat{\bTheta}_{ij} - \widetilde{\bTheta}_{ij} \right)^2 > \underline{\epsilon}^0$. Then $\widehat{\bTheta}$ belongs to the set \[\underline{\mathcal{S}} = \left \{\bTheta \in \underset{z \in \mathcal{Z}_{n,k}}{\cup}\mathcal{T}_z: \sum_{(i,j) \not \in \Omega} \left(\widehat{\bTheta}_{ij} - \widetilde{\bTheta}_{ij} \right)^2 > \underline{\epsilon}^0,\ \Vert \bTheta\Vert_{\infty}\leq \underline{\rho_n},\ \underset{i<j}{\min} \{\bTheta_{ij}\} \geq \underline{\gamma_n} \right \}.\]

As before, Lemma \ref{kl_frobenius} implies
\begin{eqnarray*}
\sum_{(i,j) \not \in \Omega} \left(\widehat{\bTheta}_{ij} - \bTheta^*_{ij} \right)^2 &\leq& 8\underline{\rho_n}\sum_{(i,j) \not \in \Omega} \mathcal{K}(\bTheta^{*}_{ij}, \widehat \bTheta_{ij})\\
&\leq& 8\underline{\rho_n}\mathcal{K}(\bTheta^{*}, \widetilde\bTheta)+8\underline{\rho_n}\sum_{(i,j) \not \in \Omega}\left (\mathcal{K}(\bTheta^{*}_{ij}, \widehat \bTheta_{ij})-\mathcal{K}(\bTheta^{*}_{ij}, \widetilde\bTheta_{ij})\right ).
\end{eqnarray*}
On the event $\cE$, $\widetilde{\bTheta}$ belongs to the set of matrices on which the maximum likelihood estimator is defined, thus the definition of $\widehat\bTheta$ implies $\sum_{(i,j) \not \in \Omega}\left (\mathcal{K}(\bA_{ij}, \widehat \bTheta_{ij})-\mathcal{K}(\bA_{ij}, \widetilde\bTheta_{ij})\right) \leq 0$ and
\begin{equation*}
\sum_{(i,j) \not \in \Omega} \left(\widehat{\bTheta}_{ij} - \bTheta^*_{ij} \right)^2 \leq 8\underline{\rho_n}\mathcal{K}(\bTheta^{*}, \widetilde\bTheta)+8\underline{\rho_n}\sum_{(i,j) \not \in \Omega}\left (\mathcal{K}(\bTheta^{*}_{ij}, \widehat \bTheta_{ij})- \mathcal{K}(\bA_{ij}, \widehat\bTheta_{ij}) - \left( \mathcal{K}(\bTheta^{*}_{ij}, \widetilde\bTheta_{ij}) - \mathcal{K}(\bA_{ij}, \widetilde\bTheta_{ij})\right) \right ).
\end{equation*}

The proof of the following lemma follows the lines of the proof of Lemma \ref{ControlKlX}, and we do not present it.

\begin{lem}\label{Lemma:bad_event_adapt}
 There exists a constant $C>0$ depending only on $C_{inf}$ such that for any $\bTheta\in \underline{\mathcal{S}}$ simultaneously we have 
$$ \left \vert \sum_{(i,j) \not \in \Omega}\left (\mathcal{K}(\bTheta^{*}_{ij}, \bTheta_{ij})- \mathcal{K}(\bA_{ij}, \bTheta_{ij}) - \left( \mathcal{K}(\bTheta^{*}_{ij}, \widetilde\bTheta_{ij}) - \mathcal{K}(\bA_{ij}, \widetilde\bTheta_{ij})\right) \right )\right \vert \leq \frac{1}{32\underline{\rho_n}}\sum_{(i,j) \not \in \Omega} \left(\bTheta_{ij} - \widetilde{\bTheta}_{ij} \right)^2+\underline{\epsilon_n} $$
 with probability at least $1-5\exp\left(-C\underline{\rho_n} \left(n\log(k) + k^2 \right)\right)$.
 \end{lem}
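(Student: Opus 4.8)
The plan is to repeat the proof of Lemma~\ref{ControlKlX} essentially verbatim, making only three cosmetic changes: the random sampling matrix $\bX$ is replaced by the \emph{deterministic} pattern $\bX^{\Omega}$ with $\bX^{\Omega}_{ij}=\1\{(i,j)\notin\Omega,\ i<j\}$, the bounds $\gamma_n,\rho_n$ are replaced throughout by $\underline{\gamma_n},\underline{\rho_n}$, and everything is carried out conditionally on the event $\cE$ and on $\Omega$. Conditionally on $\cE$ and $\Omega$ the unobserved entries $(\bA_{ij})_{(i,j)\notin\Omega}$ are still independent $\operatorname{Bernoulli}(\bTheta^{*}_{ij})$ variables, and on $\cE$ the set $\underline{\cS}$ is contained in the \emph{deterministic} set $\{\bTheta\in\bigcup_{z\in\cZ_{n,k}}\cT_z:\ \|\bTheta\|_{\infty}\le\underline{\rho_n},\ \min_{i<j}\bTheta_{ij}\ge\underline{\gamma_n}\}$, which is what lets the chaining run over a fixed index set. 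Since $\bTheta^{*}_{ij}\in[\gamma_n,\rho_n]\subseteq[\underline{\gamma_n},\underline{\rho_n}]$ and $\underline{\gamma_n}\le 1-\underline{\rho_n}$ for $n$ large in the sparse regime, the maps $x\mapsto\log x$ and $x\mapsto\log(1-x)$ are $\underline{\gamma_n}^{-1}$-Lipschitz on $[\underline{\gamma_n},\underline{\rho_n}]$, which is the only analytic input needed.

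Following the proof of Lemma~\ref{ControlKlX}, one introduces the bad event $\cE^{\mathrm{ad}}$ appearing in the statement and peels $\underline{\cS}$ into the shells $\underline{\cS}_l=\{\bTheta\in\underline{\cS}:\ 64^{l-1}\underline{\epsilon}^{0}\le\|\bTheta-\widetilde\bTheta\|_{2,\bX^{\Omega}}^{2}\le 64^{l}\underline{\epsilon}^{0}\}$, where $\|\cdot\|_{2,\bX^{\Omega}}^{2}=\sum_{(i,j)\notin\Omega}(\cdot)_{ij}^{2}$, $\underline{\epsilon}^{0}=\underline{\rho_n}\,\underline{\epsilon_n}$, and $\underline{\epsilon_n}=C(\underline{\rho_n}/\underline{\gamma_n})^{2}(n\log(k)+k^{2})$. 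Exactly as in Lemma~\ref{Lemma:ProbBlKLX}, this reduces the claim to showing, for each $T=64^{l}\underline{\epsilon}^{0}$,
$$\mathbb{P}\Bigl(Z_T^{\mathrm{ad}}\ge\tfrac{T}{2\cdot 64^{2}\underline{\rho_n}}+\underline{\epsilon_n}\Bigr)\le 4\exp\!\bigl(-C'T\underline{\gamma_n}^{2}/\underline{\rho_n}^{2}\bigr),\qquad Z_T^{\mathrm{ad}}\triangleq\sup_{\bTheta\in\underline{\cS}(T)}\Bigl|\sum_{(i,j)\notin\Omega}(\bA_{ij}-\bTheta^{*}_{ij})\Bigl(\log\tfrac{\bTheta_{ij}}{\widetilde\bTheta_{ij}}-\log\tfrac{1-\bTheta_{ij}}{1-\widetilde\bTheta_{ij}}\Bigr)\Bigr|,$$
with $\underline{\cS}(T)=\{\bTheta\in\underline{\cS}:\ \|\bTheta-\widetilde\bTheta\|_{2,\bX^{\Omega}}^{2}\le T\}$.

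This tail bound is obtained, as in Lemma~\ref{Lemma:ProbBlKLX}, by a concentration step and an expectation step. For concentration, $Z_T^{\mathrm{ad}}=f\bigl((\bA_{ij})_{(i,j)\notin\Omega}\bigr)$ with $f$ convex, and the Lipschitz computation of Lemma~\ref{Lemma:concentrationKlX} shows $f$ has Lipschitz constant $\sqrt{2T}/\underline{\gamma_n}$; Talagrand's inequality for convex Lipschitz functions (Theorem~\ref{Talagrand}) with $t=T/(8\cdot 64^{2}\underline{\rho_n})$, together with $2\sqrt{ab}\le\beta a^{2}+\beta^{-1}b^{2}$, yields $\mathbb{P}\bigl(|Z_T^{\mathrm{ad}}-\mathbb{E}Z_T^{\mathrm{ad}}|>C\underline{\rho_n}/\underline{\gamma_n}^{2}+T/(4\cdot 64^{2}\underline{\rho_n})\bigr)\le 4\exp(-C'T\underline{\gamma_n}^{2}/\underline{\rho_n}^{2})$. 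For the expectation, symmetrization (Lemma~\ref{Lemma:symmetrization}) and Talagrand's contraction principle (Lemma~\ref{Lemma:contraction}), applied to the $1$-Lipschitz maps $x\mapsto\tfrac{\underline{\gamma_n}}{2}\1\{(i,j)\notin\Omega\}\bA_{ij}\bigl(\log\tfrac{\widetilde\bTheta_{ij}-x}{\widetilde\bTheta_{ij}}-\log\tfrac{1+x-\widetilde\bTheta_{ij}}{1-\widetilde\bTheta_{ij}}\bigr)$ that vanish at $0$ on $[\widetilde\bTheta_{ij}-\underline{\rho_n},\widetilde\bTheta_{ij}-\underline{\gamma_n}]$, reduce $\mathbb{E}Z_T^{\mathrm{ad}}$ to $\tfrac{8}{\underline{\gamma_n}}\mathbb{E}\bigl[\sup_{\bTheta\in\underline{\cS}(T)}|\sum_{(i,j)\notin\Omega}\bepsilon_{ij}\bA_{ij}(\widetilde\bTheta_{ij}-\bTheta_{ij})|\bigr]$, which Lemma~\ref{BornePdtScalairePi} — applied with $\bB=\bX^{\Omega}$, $\bSigma_{ij}=\1\{(i,j)\notin\Omega\}\bepsilon_{ij}\bA_{ij}$ and $\alpha=\underline{\rho_n}$ (using $\mathbb{E}[\bA_{ij}^{2}]=\bTheta^{*}_{ij}\le\rho_n\le\underline{\rho_n}$) — bounds by $T/(4\cdot 64^{2}\underline{\rho_n})+C(\underline{\rho_n}/\underline{\gamma_n})^{2}(n\log(k)+k^{2})$. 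Combining the two steps, and absorbing the $C\underline{\rho_n}/\underline{\gamma_n}^{2}$ term using $\underline{\rho_n}^{-1}<n$ for $n$ large (a consequence of $\rho_n=\omega(n^{-1})$), produces the displayed tail bound; summing the resulting geometric series over $l\ge 1$, and noting $64^{l}\underline{\epsilon}^{0}\underline{\gamma_n}^{2}/\underline{\rho_n}^{2}\ge C\,64^{l}\underline{\rho_n}(n\log(k)+k^{2})$, bounds $\mathbb{P}(\cE^{\mathrm{ad}})$ by $5\exp(-C\underline{\rho_n}(n\log(k)+k^{2}))$ for $n$ large. Since this bound is free of the realizations of $\cE$ and $\Omega$, it holds unconditionally, and all constants depend on $C_{inf}$ only through the sequences $\underline{\gamma_n},\underline{\rho_n}$.

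The only points that require any care — and neither is a real obstacle — are (i) checking that Lemma~\ref{BornePdtScalairePi}, stated for $\bB\in\{\bPi,\bX\}$, still applies to the deterministic pattern $\bX^{\Omega}$: its proof is a covering-number plus Bernstein plus union-over-labels argument that uses nothing about the randomness of $\bX$, so it goes through verbatim; and (ii) keeping track of the $C_{inf}$-dependence hidden in the $\log(n)^{\pm 1/5}$ factors of $\underline{\gamma_n},\underline{\rho_n}$, which is exactly what yields the extra $\log(n)$ factor in the risk bound of Theorem~\ref{thm:AdaptEst} (via $\underline{\rho_n}\,(\underline{\rho_n}/\underline{\gamma_n})^{2}\asymp\rho_n\log(n)$).
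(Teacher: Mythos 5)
Your proposal is correct and follows exactly the route the paper intends: the paper omits the proof of this lemma, stating only that it "follows the lines of the proof of Lemma \ref{ControlKlX}", and your adaptation (deterministic sampling pattern off $\Omega$, replacement of $\gamma_n,\rho_n$ by $\underline{\gamma_n},\underline{\rho_n}$, conditioning on $\cE$ and $\Omega$ so that the unobserved entries remain independent Bernoulli and the index set $\underline{\mathcal{S}}$ is deterministic, and reuse of Lemmas \ref{Lemma:symmetrization}, \ref{Lemma:contraction}, \ref{BornePdtScalairePi} and Theorem \ref{Talagrand}) supplies precisely the details that adaptation requires, including the correct verification that Lemma \ref{BornePdtScalairePi} applies with $\bB=\bX^{\Omega}$ and $\alpha=\underline{\rho_n}$.
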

 
 This implies that on the event $\cE$, with large probability, 
\begin{eqnarray}\label{omegabar}
\sum_{(i,j) \not \in \Omega} \left(\widehat{\bTheta}_{ij} - \bTheta^*_{ij} \right)^2 & \leq & 8\underline{\rho_n}\mathcal{K}(\bTheta^{*}, \widetilde\bTheta)+ \frac{1}{4} \sum_{(i,j) \not \in \Omega} \left(\widehat \bTheta_{ij} - \widetilde{\bTheta}_{ij} \right)^2+ 8\underline{\epsilon_n\rho_n}\nonumber\\
&\leq&  8\underline{\rho_n}\mathcal{K}(\bTheta^{*}, \widetilde\bTheta)+ \frac{1}{2} \sum_{(i,j) \not \in \Omega} \left(\widehat \bTheta_{ij} - \bTheta^*_{ij} \right)^2 + \frac{1}{2} \sum_{(i,j) \not \in \Omega} \left(\bTheta^*_{ij} - \widetilde{\bTheta}_{ij} \right)^2 +8\underline{\epsilon_n\rho_n} \nonumber\\
\frac{1}{2}\sum_{(i,j) \not \in \Omega} \left(\widehat{\bTheta}_{ij} - \bTheta^*_{ij} \right)^2 &\leq & \left( 8 \underline{\rho_n} + 4 \underline{\rho_n} \right)\mathcal{K}(\bTheta^{*}, \widetilde\bTheta) + 8 \underline{\epsilon_n\rho_n}.
\end{eqnarray}
Using \eqref{omega} and \eqref{omegabar}, we have shown that for $n\geq N$ and some constants $C, C'>0$ depending only on $C_{inf}$, with probability at least $1-5\exp\left(-C\underline{\rho_n} \left(n\log(k) + k^2 \right)\right) - 2\exp\left(-Cn\rho_n\right)$,

\[\left \Vert \widehat{\bTheta} - \bTheta^* \right \Vert_2^2 \leq  C\left(\underline{\rho_n^2}n + \underline{\rho_n}\mathcal{K}(\bTheta^{*}, \widetilde\bTheta)+ \underline{\rho_n\epsilon_n}\right).\]

We conclude the proof of Theorem \ref{thm:AdaptEst} by noticing that $ n\underline{\rho_n}^2 \leq \underline{\rho_n\epsilon_n}$ and using that $\underline{\rho_n} = C\log(n)^{\frac{1}{5}}\rho_n$.

\subsection{Proof of Proposition \ref{prop:MLGraphon}}

By definition, 
\begin{eqnarray*}
\cK\left(\bTheta^*, \bTheta^{bc} \right) &=& \sum_{i<j} \left(\bTheta^*_{ij}\log \left(\frac{\bTheta^*_{ij}}{\bTheta^{bc}_{ij}} \right) + \left(1 - \bTheta^*_{ij} \right)\log \left(\frac{1 - \bTheta^*_{ij}}{1 - \bTheta^{bc}_{ij}} \right)\right)\\
&\leq & \sum_{i<j}\left(\bTheta^*_{ij}\frac{\bTheta^*_{ij} - \bTheta^{bc}_{ij}}{\bTheta^{bc}_{ij}} + \left(1 - \bTheta^*_{ij} \right)\frac{\bTheta^{bc}_{ij} - \bTheta^{*}_{ij}}{1 - \bTheta^{bc}_{ij}}\right)\\
&=& \sum_{i<j}\frac{\left(\bTheta^{bc}_{ij} - \bTheta^{*}_{ij}\right)^2}{\left(1 - \bTheta^{bc}_{ij}\right)\bTheta^{bc}_{ij}}
\end{eqnarray*}
where the second line follows from the fact that for any $x > 0$, $\log(x) \leq x-1$. Since for any $1 \leq i < j \leq n$,  $\bTheta^{bc}_{ij}$ and $\bTheta^*_{ij}$ belong to $[C_{inf} \rho_n, \rho_n]$, this yields
\begin{eqnarray*}
\cK\left(\bTheta^*, \bTheta^{bc} \right) &\leq & \sum_{i<j}\frac{\left(\bTheta^{bc}_{ij} - \bTheta^{*}_{ij}\right)^2}{\left(1 - \rho_n\right)C_{inf}\rho_n}.
\end{eqnarray*}
Now, recall that $\bTheta^{*}_{ij} = \rho_n W\left (\zeta_i, \zeta_j\right)$ and define $z^*: [n] \rightarrow [k]$ by $z^*(i) = \underset{1\leq a \leq k}{\sum} a \mathds{1} \left \{\zeta_i \in \left[\frac{a-1}{k}, \frac{a}{k}\right) \right\}$ for any $i \in [n]$. Moreover, define $\bTheta^{bc}_{ij} = \rho_n W\left (\frac{z^*(i)}{k}, \frac{z^*(j)}{k}\right)$. Note that by definition of $z^*$, for any $i$, $\left \vert\zeta_i - \frac{z^*(i)}{k}\right \vert \leq \frac{1}{k}$. Thus
\begin{eqnarray*}
\cK\left(\bTheta^*, \bTheta^{bc} \right) &\leq &  \frac{\rho_n}{C_{inf}(1-\rho_n)}\sum_{i<j} \left( W\left(\zeta_i, \zeta_j \right) - W\left (\frac{z^*(i)}{k}, \frac{z^*(j)}{k}\right)\right )^2\\
&\leq &   \frac{4\rho_nM^2}{C_{inf}(1-\rho_n)}\sum_{i<j} \left(\frac{1}{k}\right)^{2(\alpha \land 1)}
\end{eqnarray*}
where the last equation follows from \eqref{Hoelder}.

\subsection{Technical lemmas}
\label{subsec:technicalLemmas}

 \begin{lem}\label{kl_frobenius}
 For any $\bTheta$, $\bTheta' \in \mathbb{R}^{n\times n}$ and $\bPi \in [0,1]^{n\times n}_{sym}$, 
 \[\Vert \bTheta-\bTheta'\Vert^{2}_{2,\bPi}\leq 8\left (\Vert \bTheta\Vert_{\infty}\vee \Vert  \bTheta'\Vert_{\infty}\right ) \mathcal{K}_{\bPi}(\bTheta, \bTheta').\]
 \end{lem}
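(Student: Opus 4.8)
The plan is to reduce the matrix statement to a one–dimensional inequality for the Bernoulli Kullback--Leibler divergence and then obtain the latter from a second–order Taylor expansion. Since $\Vert\bTheta-\bTheta'\Vert_{2,\bPi}^2=\sum_{i<j}\bPi_{ij}(\bTheta_{ij}-\bTheta'_{ij})^2$ and $\mathcal{K}_{\bPi}(\bTheta,\bTheta')=\sum_{i<j}\bPi_{ij}\,\mathcal{K}(\bTheta_{ij},\bTheta'_{ij})$ with all weights $\bPi_{ij}\ge 0$, and since $\bTheta_{ij}\vee\bTheta'_{ij}\le\Vert\bTheta\Vert_\infty\vee\Vert\bTheta'\Vert_\infty$, it is enough to prove the scalar bound
\[
(q-q')^2\ \le\ 2\,(q\vee q')\,\mathcal{K}(q,q')\qquad\text{for all }q,q'\in[0,1],
\]
and then multiply by $\bPi_{ij}$ and sum over $i<j$; this yields the claim even with the sharper constant $2$ in place of $8$. (Entries lying outside $[0,1]$ may be discarded at the outset, as there $\mathcal{K}_{\bPi}(\bTheta,\bTheta')=+\infty$ by convention and nothing is to be proved.)

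To establish the scalar bound I would first peel off the boundary: if $q=q'$ both sides vanish; if $q'\in\{0,1\}$ and $q\ne q'$ then $\mathcal{K}(q,q')=+\infty$; and if $q\in\{0,1\}$ with $q'\in(0,1)$ the inequality is checked directly using $\log(1/x)\ge 1-x$. For the interior case $q,q'\in(0,1)$, set $g(t):=\mathcal{K}(t,q')$; then $g(q')=0$, $g'(t)=\log\frac{t(1-q')}{q'(1-t)}$ (so $g'(q')=0$), and $g''(t)=\frac1{t(1-t)}$, whence Taylor's formula with integral remainder gives
\[
\mathcal{K}(q,q')\ =\ \int_{q'}^{q}\frac{q-t}{t(1-t)}\,\mathrm{d}t\ =\ \int_{q\wedge q'}^{q\vee q'}\frac{|q-t|}{t(1-t)}\,\mathrm{d}t .
\]
On the range of integration $t(1-t)\le t\le q\vee q'$, so the integrand is at least $|q-t|/(q\vee q')$; combined with $\int_{q\wedge q'}^{q\vee q'}|q-t|\,\mathrm{d}t=(q-q')^2/2$ this gives $\mathcal{K}(q,q')\ge (q-q')^2/\bigl(2(q\vee q')\bigr)$, i.e. exactly the scalar bound, and the lemma follows by summation.

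I do not expect a genuine obstacle here; the only points requiring a little care are (i) noting that the integral representation of $\mathcal{K}$ is valid only as an improper integral near the boundary values $0$ and $1$, which is precisely why those cases are treated separately, and (ii) tracking which of $q,q'$ is the larger one so that the elementary estimate $t(1-t)\le q\vee q'$ is applied on the correct interval. One could alternatively invoke Pinsker's inequality $\mathcal{K}(q,q')\ge 2(q-q')^2$ to get a bound with an absolute constant, but the whole point of Lemma~\ref{kl_frobenius} is the refinement by the factor $q\vee q'$, which is what makes it effective in the sparse regime $\rho_n\to 0$.
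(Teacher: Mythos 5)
Your proof is correct, and its key scalar step takes a genuinely different route from the paper's. The paper lower-bounds the Bernoulli divergence by the Hellinger-type quantity $(\sqrt{q}-\sqrt{q'})^{2}+(\sqrt{1-q}-\sqrt{1-q'})^{2}$, then uses $a^{2}+b^{2}\ge\tfrac12(a-b)^{2}$ and the mean value theorem applied to $x\mapsto\sqrt{x}-\sqrt{1-x}$ at an intermediate point $\eta$ between $q$ and $q'$, arriving at $\cK(q,q')\ge (q-q')^{2}/\bigl(8(q\vee q')\bigr)$, and concludes by the same entrywise summation against $\bPi_{ij}$ that you perform. You instead write $\cK(q,q')$ exactly via the second-order Taylor formula with integral remainder for $t\mapsto\cK(t,q')$, whose second derivative is $1/(t(1-t))$, and bound $t(1-t)\le q\vee q'$ on the integration range; this yields the sharper scalar bound $\cK(q,q')\ge (q-q')^{2}/\bigl(2(q\vee q')\bigr)$, so constant $2$ would in fact suffice in the lemma. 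Your version is quantitatively better and also handles the degenerate cases explicitly (entries equal to $0$ or $1$, or outside $[0,1]$), which the paper's computation implicitly assumes away by working with $0<q,q'<1$; the paper's version, in exchange, is the standard Hellinger comparison and avoids any integral representation. In both arguments the reduction to a scalar inequality and the weighted summation over $i<j$ are identical.
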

 \begin{proof}
 By definition of Bernoulli Kullback-Leibler divergence for any $0<q,q'<1$ we have that
 \begin{equation*}
 \begin{split}
 \cK(q,q')&=q\log\left (\frac{q}{q'}\right )+(1-q)\log\left (\frac{1-q}{1-q'}\right )\geq \left (\sqrt{q}-\sqrt{q'}\right )^{2}+\left (\sqrt{1-q}-\sqrt{1-q'}\right )^{2}\\
 &\geq \frac{1}{2}\left [\left (\sqrt{q}-\sqrt{ q'}\right )-\left (\sqrt{1-q}-\sqrt{1-q'}\right )\right ]^{2}.
 \end{split}
 \end{equation*}
 Using Taylor's Theorem for some $\eta$ between $q$ and $q'$ we get
  \begin{equation}\label{one-dim_kl}
   \begin{split}
   \cK(q,q')&\geq \frac{1}{2}\left [\frac{1}{2\sqrt{\eta}}\left (q-q'\right )+\frac{1}{2\sqrt{1-\eta}}\left (q-q'\right )\right ]^{2}= \frac{\left (q-q'\right )^{2}}{8}\left [\frac{1}{\sqrt{\eta}}+\frac{1}{\sqrt{1-\eta}}\right ]^{2}\\&= \frac{\left (q-q'\right )^{2}}{8}\left [\frac{1}{\eta}+\frac{1}{1-\eta}\right ]= \frac{\left (q-q'\right )^{2}}{8} \frac{1}{\eta(1-\eta)}\geq \frac{(q-q')^{2}}{8(q\vee  q')}.
   \end{split}
   \end{equation}
   Now Lemma \ref{kl_frobenius} follows from  \eqref{one-dim_kl} and 
   \begin{align*}
   \mathcal{K}_{\bPi}(\bTheta, \bTheta')=\sum_{i<j}\bPi_{ij}\cK (\bTheta_{ij}, \bTheta_{ij}').
     \end{align*}
 \end{proof}
 
 \begin{lem}\label{KLSeuil}
 Let $\widetilde{\bTheta}^s$ and $n_s$ be defined as in \eqref{eq:barThetaS}, and assume that $\gamma_n \leq \frac{1}{2}$, then
 $$\mathcal{K}_{\bPi}(\bTheta^{*}, \widetilde \bTheta^s) - \mathcal{K}_{\bPi}(\bTheta^{*}, \widetilde \bTheta) \leq 2\gamma_n n_s.$$
 \end{lem}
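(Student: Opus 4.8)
The plan is to use that $\widetilde\bTheta^s$ and $\widetilde\bTheta$ coincide on every entry except those for which $\widetilde\bTheta_{ij}<\gamma_n$, of which there are exactly $n_s$ among the pairs $i<j$. I would begin by writing the difference entrywise,
\[
\mathcal{K}_{\bPi}(\bTheta^{*}, \widetilde \bTheta^s) - \mathcal{K}_{\bPi}(\bTheta^{*}, \widetilde \bTheta) = \sum_{i<j}\bPi_{ij}\bigl(\cK(\bTheta^*_{ij},\widetilde\bTheta^s_{ij})-\cK(\bTheta^*_{ij},\widetilde\bTheta_{ij})\bigr),
\]
and noting that, by \eqref{eq:barThetaS}, $\widetilde\bTheta^s_{ij}=\bQ^s_{z^*(i)z^*(j)}=\bQ^*_{z^*(i)z^*(j)}\lor\gamma_n=\widetilde\bTheta_{ij}\lor\gamma_n$, so the summand is $0$ unless $\widetilde\bTheta_{ij}<\gamma_n$, in which case $\widetilde\bTheta^s_{ij}=\gamma_n$.

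For a pair with $\widetilde\bTheta_{ij}<\gamma_n$, setting $q=\bTheta^*_{ij}\in[0,1]$ and $a=\widetilde\bTheta_{ij}$, the next step is the algebraic identity
\[
\cK(q,\gamma_n)-\cK(q,a)=q\log\frac{a}{\gamma_n}+(1-q)\log\frac{1-a}{1-\gamma_n}.
\]
Since $a<\gamma_n$ the first term is $\le 0$, and since $1-a\le 1$ and $0\le 1-q\le 1$ the second term is at most $\log\frac{1}{1-\gamma_n}=-\log(1-\gamma_n)$. I would then invoke the elementary inequality $-\log(1-x)\le 2x$ for $x\in[0,\tfrac12]$ — which holds because $x\mapsto 2x+\log(1-x)$ vanishes at $0$ and has nonnegative derivative $2-\tfrac{1}{1-x}$ on $[0,\tfrac12]$ — together with the hypothesis $\gamma_n\le\tfrac12$, to obtain $\cK(\bTheta^*_{ij},\gamma_n)-\cK(\bTheta^*_{ij},\widetilde\bTheta_{ij})\le 2\gamma_n$.

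Summing over the at most $n_s$ indices with $\widetilde\bTheta_{ij}<\gamma_n$ and bounding $\bPi_{ij}\le 1$ then yields $\mathcal{K}_{\bPi}(\bTheta^{*}, \widetilde \bTheta^s) - \mathcal{K}_{\bPi}(\bTheta^{*}, \widetilde \bTheta)\le 2\gamma_n n_s$. I do not expect any genuine difficulty here; the only point to watch is the degenerate case $\widetilde\bTheta_{ij}=0$, where $\cK(q,a)$ may equal $+\infty$ — but this only helps the upper bound (the difference becomes $-\infty$ when $q>0$, and when $q=0$ it reduces to $\cK(0,\gamma_n)=-\log(1-\gamma_n)\le 2\gamma_n$), so it is harmless.
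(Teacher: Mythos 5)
Your proposal is correct and follows essentially the same route as the paper: decompose the difference entrywise, observe it is nonzero only on the $n_s$ pairs with $\widetilde\bTheta_{ij}<\gamma_n$, drop the nonpositive $\bTheta^*_{ij}\log(\widetilde\bTheta_{ij}/\gamma_n)$ term, and bound the remaining log term by $2\gamma_n$ using $\gamma_n\leq\tfrac12$ (the paper uses $\log(1+x)\leq x$ and $\tfrac{\gamma_n}{1-\gamma_n}\leq 2\gamma_n$ where you use $-\log(1-\gamma_n)\leq 2\gamma_n$, an immaterial difference). Your remark on the degenerate case $\widetilde\bTheta_{ij}=0$ is a harmless extra observation.
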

\begin{proof}
\begin{eqnarray*}
\mathcal{K}_{\bPi}(\bTheta^{*}, \widetilde \bTheta^s) - \mathcal{K}_{\bPi}(\bTheta^{*}_{ij}, \widetilde \bTheta_{ij}) &=& \underset{i<j}{\sum}\bPi_{ij} \left(  \bTheta^*_{ij} \log \left(\frac{\widetilde \bTheta_{ij}}{\widetilde \bTheta^s_{ij}} \right) + (1-\bTheta^*_{ij}) \log \left(\frac{1-\widetilde \bTheta_{ij}}{1-\widetilde \bTheta^s_{ij}} \right)\right) \\
&=& \underset{i<j}{\sum} \bPi_{ij}\mathds{1}\left\{\widetilde \bTheta_{ij} < \gamma_n \right\} \left( \bTheta^*_{ij} \log \left(\frac{\widetilde \bTheta_{ij}}{\gamma_n} \right) + (1-\bTheta^*_{ij}) \log \left(\frac{1-\widetilde \bTheta_{ij}}{1-\gamma_n} \right)\right)\\
&\leq &\underset{i<j}{\sum} \bPi_{ij}\mathds{1}\left\{\widetilde \bTheta_{ij} < \gamma_n \right\} (1-\bTheta^*_{ij}) \log \left(1 + \frac{\gamma_n-\widetilde \bTheta_{ij}}{1-\gamma_n} \right)\\
&\leq& \underset{i<j}{\sum} \bPi_{ij}\mathds{1}\left\{\widetilde\bTheta_{ij} < \gamma_n \right\}  \frac{\gamma_n-\widetilde\bTheta_{ij}}{1-\gamma_n} \leq \underset{i<j}{\sum} \bPi_{ij}\mathds{1}\left\{\widetilde \bTheta_{ij} < \gamma_n \right\} 2\gamma_n \leq 2n_s\gamma_n.
\end{eqnarray*}
\end{proof}


\subsection{Proof of Proposition \ref{prp:variational}} \label{proof:variational}

In the case of fully observed network, we alleviate notations and write 
\begin{eqnarray*}
\cL(\bA;z,\bQ) &=& \underset{i \neq j}{\sum}\bA_{ij}\log\left(\bQ_{z(i), z(j)}\right) + \left(1-\bA_{ij}\right)\log\left(1 -\bQ_{z(i), z(j)}\right) ,\\
\mathfrak{l}\left(\bA; \alpha, \bQ\right) &=&  \underset{z \in \mathcal{Z}_{n,k}}{\sum}\left(\underset{i}{\prod} \alpha_{z(i)}\right)\exp\left(\cL(\bA ;z,\bQ)\right),\\
\text{and } \mathcal{J}\left(\bA; \tau, \alpha, \bQ\right) &=& \log\left(\mathfrak{l}\left(\bA; \alpha, \bQ\right)\right) - KL \left(\mathbb{P}_{\tau} \left(\cdot \right) \vert \vert \mathbb{P} \left(\cdot \vert \bA, \alpha, \bQ\right) \right).
\end{eqnarray*}

For any $z \in \mathcal{Z}_{n,k}$ and $(\alpha, \bQ) \in \mathcal{Q}$, we denote
$$\mathfrak{l}'\left(\bA, z ; \alpha, \bQ\right) = \left(\underset{i \leq n}{\prod}\alpha_{z(i)} \right)\exp \left(\cL(\bA;z,\bQ) \right)$$ the likelihood of the parameters $(\alpha, \bQ)$ and the label function $z$. Then, the likelihood of the stochastic block model with parameters $(\alpha, \bQ)$ is given by $\mathfrak{l}\left(\bA; \alpha, \bQ\right) = \underset{z \in \mathcal{Z}_{n,k}}{\sum} \mathfrak{l}'\left(\bA, z ; \alpha, \bQ\right)$.  Note that the likelihood functions $\mathfrak{l}\left(\bA; \alpha, \bQ\right)$ and $\mathfrak{l}'\left(\bA, z ; \alpha, \bQ\right)$ provide lower and upper bounds on the variational objective function $\mathcal{J}\left(\bA; \tau, \alpha, \bQ\right)$ : for any parameter $(\alpha, \bQ)$ and any label function $z \in \cZ_{n,k}$,
\begin{equation}\label{eq:orderLikelihood}
\mathfrak{l}'\left(\bA, z ; \alpha, \bQ\right) \leq \sup_{\tau \in \cT}\exp\left(\mathcal{J}\left(\bA; \tau, \alpha, \bQ\right)\right) \leq \mathfrak{l}\left(\bA; \alpha, \bQ\right).
\end{equation}

To prove Proposition \ref{prp:variational}, we first show that $\mathbb{P}\left(\cdot \vert \bA,  \widehat{\alpha}^{Var}, \widehat{\bQ}^{Var}\right)$, i.e. the posterior distribution of $z$ at the variational estimator $(\widehat{\alpha}^{Var}, \widehat{\bQ}^{Var})$, concentrates around $\delta_{z'}$, the dirac distribution at the label function 
$z' = \argmax_{z : z\sim z^*} \mathfrak{l}'\left(\bA, z; \widehat{\alpha}^{VAR},  \widehat{\bQ}^{VAR}\right)$ :

\begin{equation}\label{eq:concentrationThetaVar}
    \mathbb{P}\left(z' \vert \bA, \widehat{\alpha}^{Var}, \widehat{\bQ}^{Var}\right) = 1 - o_p(1).
\end{equation}
Then, we show that it implies the concentration of the estimator $\widehat{z}^{Var}$ :
\begin{equation}\label{eq:concentrationzVar}
    \mathbb{P}\left(\widehat{z}^{Var} = z' \vert \bA\right) = 1 - o_p(1).
\end{equation}
Together \eqref{eq:concentrationThetaVar} and \eqref{eq:concentrationzVar} imply $\mathbb{P}\left(\widehat{z}^{Var} \sim z^* \vert \bA\right) = 1 - o_p(1)$. Since the random variable $\mathbb{P}\left(\widehat{z}^{Var} \sim z^* \vert \bA\right)$ is bounded, Equation \eqref{eq:concentrationzVar} also implies that it converges to $1$ in expectation. Finally, we show that with probability going to one, the maximum likelihood estimator of the label function is equal to the true label function (up to permutation):
\begin{equation}\label{eq:concentrationzML}
    \mathbb{P}\left(\widehat{z} \sim z^*\right) = 1 - o_p(1)
\end{equation}
which concludes the proof of Proposition  \ref{prp:variational}.

\bigskip

\noindent \textbf{\underline{Proof of Equation \eqref{eq:concentrationThetaVar}}}

\noindent The proof of Equation \eqref{eq:concentrationThetaVar} relies on results proven in \cite{LikelihoodBickel}, which we recall for the sake of completeness. For any two parameters $(\alpha, \bQ)$ and  $(\alpha', \bQ')$ in $\mathcal{Q}$, we say that $(\alpha', \bQ') \in \mathcal{S}_{\alpha, \bQ}$ if there exists a permutation $\sigma$ of $\{1, ..., k\}$ such that for any $(a,b) \in \{1, ..., k\}^2$, $\bQ'_{\sigma(a), \sigma(b)} = \bQ_{a,b}$ and $\alpha'_{\sigma(a)} = \alpha_{a}$.

\begin{thm}[Theorem 1 in \cite{LikelihoodBickel}]\label{thm:1Bickel}
Let $(z^*, A)$ be generated from a stochastic block model with parameters $(\alpha^*,\bQ^*) \in \mathcal{Q}$ such that $\bQ^0$ has no identical columns and $\rho_n \gg \log(n)/n$. Then, for any $(\alpha, \bQ) \in \mathcal{Q}$,
$$\frac{\mathfrak{l}\left(\bA; \alpha, \bQ\right)}{\mathfrak{l}\left(\bA; \alpha^*, \bQ^*\right)} = \underset{(\alpha', \bQ') \in \mathcal{S}_{\alpha, \bQ}}{\max} \frac{\mathfrak{l}'\left(\bA, z^* ; \alpha', \bQ'\right)}{\mathfrak{l}'\left(\bA, z^* ; \alpha^*, \bQ^*\right)} \left(1 + \epsilon_n\left(\left(\alpha', \bQ'\right), k\right)\right) +\epsilon_n\left(\left(\alpha', \bQ'\right), k\right)$$
where $\sup_{(\alpha, \bQ) \in \cQ} \epsilon_n\left(\left(\alpha, \bQ\right), k\right) = o_p(1)$.
\end{thm}
\begin{lem}[Lemma 3 in \cite{LikelihoodBickel}]\label{Lem:3Bickel}
Let $(z^*, A)$ be generated from a stochastic block model with parameters $(\alpha^*,\bQ^*) \in \mathcal{Q}$ such that $\bQ^0$ has no identical columns and $\rho_n \gg \log(n)/n$. Then,
$$\frac{\mathfrak{l}'\left(\bA, z^*; \alpha^*, \bQ^*\right)}{\mathfrak{l}\left(\bA; \alpha^*, \bQ^*\right)} = 1 + o_p(1).$$ 
\end{lem}

\noindent Recall that $z' = \argmax_{z : z\sim z^*} \mathfrak{l}'\left(\bA, z^*; \widehat{\alpha}^{VAR},  \widehat{\bQ}^{VAR}\right)$. By definition of $\mathfrak{l}$ and $\mathfrak{l}'$, 
\begin{eqnarray*}
\underset{z \neq z'}{\sum}\mathfrak{l}'\left(\bA, z; \widehat{\alpha}^{VAR}, \widehat{\bQ}^{VAR}\right)&=& \mathfrak{l}\left(\bA; \widehat{\alpha}^{VAR}, \widehat{\bQ}^{VAR}\right) -  \mathfrak{l}'\left(\bA, z'; \widehat{\alpha}^{VAR}, \widehat{\bQ}^{VAR}\right).
\end{eqnarray*}
Thus
\begin{eqnarray}\label{eq:decompo}
 \frac{\underset{z \neq z'}{\sum}\mathfrak{l}'\left(\bA, z; \widehat{\alpha}^{VAR}, \widehat{\bQ}^{VAR}\right)}{\mathfrak{l}'\left(\bA, z^*; \alpha^*, \bQ^*\right)}&=& \frac{\mathfrak{l}\left(\bA; \alpha^*, \bQ^*\right)}{\mathfrak{l}'\left(\bA, z^*; \alpha^*, \bQ^*\right)}\times \frac{\mathfrak{l}\left(\bA; \widehat{\alpha}^{VAR}, \widehat{\bQ}^{VAR}\right)}{\mathfrak{l}\left(\bA; \alpha^*, \bQ^*\right)} -  \frac{\mathfrak{l}'\left(\bA, z'; \widehat{\alpha}^{VAR}, \widehat{\bQ}^{VAR}\right)}{\mathfrak{l}'\left(\bA, z^*; \alpha^*, \bQ^*\right)}.
\end{eqnarray}
Using Lemma \ref{Lem:3Bickel}, we have that 
\begin{equation}\label{eq:Lem3}
\frac{\mathfrak{l}\left(\bA; \alpha^*, \bQ^*\right)}{\mathfrak{l}'\left(\bA, z^*; \alpha^*, \bQ^*\right)} = 1 + o_p(1).
\end{equation}
Moreover, we note that 
\begin{eqnarray*}
\max_{(\alpha', \bQ') \in \mathcal{S}_{\widehat{\alpha}^{VAR},\widehat{\bQ}^{VAR}}} \mathfrak{l}'\left(\bA, z^* ; \alpha', \bQ'\right) & = & \max_{z \sim z^*} \mathfrak{l}'\left(\bA, z ; \widehat{\alpha}^{VAR},\widehat{\bQ}^{VAR}\right) \\
& = & \mathfrak{l}'\left(\bA, z' ; \widehat{\alpha}^{VAR},\widehat{\bQ}^{VAR}\right)
\end{eqnarray*}
by the definition of $z'$. Then, applying Theorem \ref{thm:1Bickel}, we get that 
\begin{equation}\label{eq:Th1}
\frac{\mathfrak{l}\left(\bA; \widehat{\alpha}^{VAR}, \widehat{\bQ}^{VAR}\right)}{\mathfrak{l}\left(\bA; \alpha^*, \bQ^*\right)} = \frac{\mathfrak{l}'\left(\bA, z' ; \widehat{\alpha}^{VAR}, \widehat{\bQ}^{VAR}\right)}{\mathfrak{l}'\left(\bA, z^* ; \alpha^*, \bQ^*\right)} \left(1 + o_p(1)\right) + o_p(1).
\end{equation}
Combining Equations \eqref{eq:decompo}, \eqref{eq:Lem3} and \eqref{eq:Th1}, we obtain that 
\begin{eqnarray*}
 \frac{\underset{z \neq z'}{\sum}\mathfrak{l}'\left(\bA, z; \widehat{\alpha}^{VAR}, \widehat{\bQ}^{VAR}\right)}{\mathfrak{l}'\left(\bA, z^*; \alpha^*, \bQ^*\right)}&=& \frac{\mathfrak{l}'\left(\bA, z' ; \widehat{\alpha}^{VAR}, \widehat{\bQ}^{VAR}\right)}{\mathfrak{l}'\left(\bA, z^* ; \alpha^*, \bQ^*\right)} o_p(1) + o_p(1).
\end{eqnarray*}
Thus, 
\begin{eqnarray}\label{eq:petit_o}
\underset{z \neq z'}{\sum}\mathfrak{l}'\left(\bA, z; \widehat{\alpha}^{VAR}, \widehat{\bQ}^{VAR}\right) = \max \left\{\mathfrak{l}'\left(\bA, z^*; \alpha^*, \bQ^*\right), \mathfrak{l}'\left(\bA, z' ; \widehat{\alpha}^{VAR}, \widehat{\bQ}^{VAR}\right)\right\} o_p(1).
\end{eqnarray}
On the one hand, using Equation \eqref{eq:orderLikelihood} and the definition of $(\widehat{\tau}^{VAR}, \widehat{\alpha}^{VAR}, \widehat{\bQ}^{VAR})$, we find that 
\begin{eqnarray*}
\mathfrak{l}'\left(\bA, z^* ; \alpha^*, \bQ^*\right)& \leq& \sup_{\tau \in \cT} \exp\left(\mathcal{J}\left(\bA;\tau,  \alpha^*, \bQ^*\right)\right)\\
&\leq &\exp\left(\mathcal{J}\left(\bA; \widehat{\tau}^{VAR}, \widehat{\alpha}^{VAR}, \widehat{\bQ}^{VAR}\right)\right) \\
&\leq &\mathfrak{l}\left(\bA;  \widehat{\alpha}^{VAR}, \widehat{\bQ}^{VAR}\right).
\end{eqnarray*}
Also, by the definition of $\mathfrak{l}$ and $\mathfrak{l}'$, we have that $\mathfrak{l}'\left(\bA, z' ; \widehat{\alpha}^{VAR}, \widehat{\bQ}^{VAR}\right)\leq\mathfrak{l}\left(\bA;  \widehat{\alpha}^{VAR}, \widehat{\bQ}^{VAR}\right)$. Thus, Equation \eqref{eq:petit_o} implies
\begin{eqnarray}\label{eq:FinalPosterior}
\underset{z \neq z'}{\sum}\mathfrak{l}'\left(\bA, z; \widehat{\alpha}^{VAR}, \widehat{\bQ}^{VAR}\right) =  \mathfrak{l}\left(\bA;  \widehat{\alpha}^{VAR}, \widehat{\bQ}^{VAR}\right)o_p(1).
\end{eqnarray}
Now, we can conclude the proof of Equation \eqref{eq:concentrationThetaVar} by noticing that
\begin{eqnarray*}
\mathbb{P}\left(z' \vert \bA, \widehat{\alpha}^{Var}, \widehat{\bQ}^{Var}\right) &=& \frac{\mathfrak{l}'\left(\bA, z'; \widehat{\alpha}^{VAR}, \widehat{\bQ}^{VAR}\right)}{\mathfrak{l}\left(\bA; \widehat{\alpha}^{VAR}, \widehat{\bQ}^{VAR}\right)}\\
&=&  1 - \frac{\underset{z \neq z'}{\sum}\mathfrak{l}'\left(\bA, z; \widehat{\alpha}^{VAR}, \widehat{\bQ}^{VAR}\right)}{\mathfrak{l}\left(\bA; \widehat{\alpha}^{VAR}, \widehat{\bQ}^{VAR}\right)}
\end{eqnarray*}
and using Equation \eqref{eq:FinalPosterior}.

\bigskip

\noindent \textbf{\underline{Proof of Equation \eqref{eq:concentrationzVar}}}
By the definition of $\mathcal{J}\left(\bA; \tau, \alpha, \bQ\right)$, we have that
\begin{equation*}
KL\left(\mathbb{P}_{\widehat{\tau}^{VAR}}(\cdot)\vert\vert\mathbb{P}\left(\cdot \vert \bA, \widehat{\alpha}^{VAR}, \widehat{\bQ}^{VAR}\right) \right) = \log\left(\mathfrak{l} \left(\bA; \widehat{\alpha}^{VAR}, \widehat{\bQ}^{VAR}\right) \right)- \mathcal{J}\left(\bA; \widehat{\tau}^{VAR}, \widehat{\alpha}^{VAR}, \widehat{\bQ}^{VAR}\right).
\end{equation*}
Equation \eqref{eq:orderLikelihood} implies that $\mathcal{J}\left(\bA; \widehat{\tau}^{VAR}, \widehat{\alpha}^{VAR}, \widehat{\bQ}^{VAR}\right) \geq \log\left(\mathfrak{l}\left(\bA, z' ; \widehat{\alpha}^{VAR}, \widehat{\bQ}^{VAR}\right)\right)$, so
\begin{equation*}
KL\left(\mathbb{P}_{\widehat{\tau}^{VAR}}(\cdot)\vert\vert\mathbb{P}\left(\cdot \vert \bA, \widehat{\alpha}^{VAR}, \widehat{\bQ}^{VAR}\right) \right) \leq \log\left(\mathfrak{l} \left(\bA; \widehat{\alpha}^{VAR}, \widehat{\bQ}^{VAR}\right) \right) - \log\left(\mathfrak{l}\left(\bA, z' ; \widehat{\alpha}^{VAR}, \widehat{\bQ}^{VAR}\right)\right).
\end{equation*}
Note that Equation \eqref{eq:concentrationThetaVar} implies
\begin{equation*}
\log\left(\mathfrak{l} \left(\bA; \widehat{\alpha}^{VAR}, \widehat{\bQ}^{VAR}\right) \right) - \log\left(\mathfrak{l}\left(\bA, z' ; \widehat{\alpha}^{VAR}, \widehat{\bQ}^{VAR}\right)\right) = o_p(1).
\end{equation*}
Now, using Pinsker's inequality, we see that 
\begin{equation*}
\left \vert \mathbb{P}_{\widehat{\tau}^{VAR}}(z') - \mathbb{P}\left(z' \vert \bA, \widehat{\alpha}^{VAR}, \widehat{\bQ}^{VAR}\right) \right\vert = o_p(1).
\end{equation*}
We use Equation \eqref{eq:concentrationThetaVar} and the definition of $\widehat{z}^{(VAR)}$ to conclude the proof of Equation \eqref{eq:concentrationzVar}.

\bigskip

\textbf{\underline{Proof of Equation \eqref{eq:concentrationzML}}}

Equation \eqref{eq:concentrationzML} is proven in \cite{Bickel21068}. In this work, the authors define the profile likelihood modularity $\mathcal{Q}_{LM}(A,z)$ of a label function $z \in \cZ_{n,k}$ as
\begin{equation*}
\mathcal{Q}_{LM}(A,z) = \frac{1}{2}\underset{a,b}{\sum}n_{ab}\left(\frac{\bO_{ab}}{n_{ab}}\log\left(\frac{\bO_{ab}}{n_{ab}}\right) + \left(1 - \frac{\bO_{ab}}{n_{ab}}\right)\log\left(1-\frac{\bO_{ab}}{n_{ab}}\right)\right).
\end{equation*}
for $\bO_{ab} = \underset{i \in z^{-1}(a), j \in z^{-1}(b)}{\sum}\bA_{ij}$ and $$n_{ab}  = \left\{
    \begin{array}{ll}
        \vert z^{-1}(a)\vert \times \vert z^{-1}(b)\vert & \mbox{ if } a \neq b \\
        \vert z^{-1}(a)\vert \times \left(\vert z^{-1}(a)\vert - 1 \right)  & \mbox{otherwise}
    \end{array}
\right.
$$ For $\hat{z}^{LM} = \argmax_{z\in\cZ_{n,k}}\mathcal{Q}_{LM}(A,z)$, the authors of \cite{Bickel21068} prove that under the assumptions of Proposition \ref{prp:variational}, with probability going to $1$, $\hat{z}^{LM} \sim z^*$. Since maximizing $\mathcal{Q}(A,z)$ is equivalent to maximizing $\max_{\bQ} \mathcal{L}\left(\bA; \bQ, z\right)$, this implies that $\widehat{z} \sim z^*$ with probability going to $1$.


\subsection{Proof of Proposition \ref{prp:boundvariational}}

To prove Proposition \ref{prp:boundvariational}, we show that the maximum likelihood estimator studied in Proposition \ref{prp:variational} is equal to the restricted maximum estimator studied in Corollary \ref{corPi1}. Proposition \ref{prp:boundvariational} then follows from Corollary \ref{corPi1}. 

Define $c_{min} = \min_{a,b}\bQ^0_{a,b}$ and $c_{max} = \max_{a,b}\bQ^0_{a,b}$. Corollary \ref{corPi1} implies that for some absolute constant $C>0$, 
\begin{equation*}
\begin{split}
\mathbb{P}\left(\left\Vert  \bTheta^* - \widehat{\bTheta}^{r} \right \Vert_2^2 \leq  C(c_{max}/c_{min})^2\rho_n\left(k^2 + n\log(k)\right) \right) \rightarrow 1,
\end{split}
\end{equation*}
where the restricted maximum likelihood estimator $\widehat{\bTheta}^{r}$ is defined as
\begin{equation*}
\begin{split}
&\widehat{\bTheta}^{r}_{i<j} = \widehat{\bQ}^{r}_{\widehat{z}^{r}(i) \widehat{z}^{r}(j)},\  \widehat{\bTheta}^{r}_{ii}=0\\
&(\widehat{\bQ}^{r},\widehat{z}^{r}) \in \underset{\bQ \in [c_{min} \rho_n /2, 2c_{max} \rho_n]^{k\times k}_{\rm sym}, z\in \cZ_{n,k}}{\argmin} \sum_{i\neq j}\cK(\bA_{ij},\bQ_{z(i)z(j)}).
\end{split}
\end{equation*}
One the other hand, Proposition \ref{prp:variational} implies that with probability going to one, the variational estimator of the probabilities of connections $\widehat{\bTheta}^{VAR}$ is equal to the maximum likelihood estimator $\widehat{\bTheta} $ given by 
\begin{equation*}
\begin{split}
&\widehat{\bTheta}_{i<j} = \widehat{\bQ}_{\widehat{z}(i) \widehat{z}(j)},\  \widehat{\bTheta}_{ii}=0\\
\text{ for }&(\widehat{\bQ}, \widehat{z}) \in \underset{\bQ \in \cQ, z\in \cZ_{n,k}}{\argmin} \sum_{i\neq j}\cK(\bA_{ij},\bQ_{z(i)z(j)}).
\end{split}
\end{equation*}
Thus, it is enough to show that $\widehat{\bTheta} = \widehat{\bTheta}^r$ with large probability to prove Proposition \ref{prp:boundvariational}. To do so, we show that with large probability, $\bQ(\widehat{z}) \in [c_{min}\rho_n/2, 2c_{max}\rho_n]^{k\times k}]$. We define
$$n_{ab}(z)  = \left\{
    \begin{array}{ll}
        \vert z^{-1}(a)\vert \times \vert z^{-1}(b)\vert & \mbox{ if } a \neq b \\
        \vert z^{-1}(a)\vert \times \left(\vert z^{-1}(a)\vert - 1 \right)  & \mbox{otherwise}
    \end{array}
\right.
$$
for $z \in \cZ_{n,k}$, and $\bQ(z) = \left(\bQ(z)_{ab}\right)$ such that $\bQ(z)_{ab} = \left(\underset{i \in z^{-1}(a), j \in z^{-1}(b)}{\sum}\bA_{ij}\right)/n_{ab}(z)$. With these notations, we note that $\widehat{\bQ} = \bQ(\widehat{z})$.

Recall that $\vert(z^*) ^{-1}(a)\vert$ is a sum of $n$ independent Bernoulli random variables with mean $\alpha^0_a$. Using Bernstein's inequality \ref{thm:Bernstein}, we find that for any $a$, 
$$\mathbb{P}\left(n\alpha^0_a - \vert(z^*) ^{-1}(a)\vert \geq 0.5n \alpha^0_{a} \right) \leq 2e^{-n\alpha^0_{a}/16}.$$
Thus, 
$$\mathbb{P}\left(\min_{a} \vert(z^*) ^{-1}(a)\vert  \leq 0.5n \min_{a} \alpha^0_{a} \right) \leq 2ke^{-n\min_{a} \alpha^0_a/16}.$$
Therefore, the event $\Omega = \left\{ \min_{a,b} n_{a,b}(z^*) \geq n^2 \min_a (\alpha^0_a)^2/5 \right\}$ holds with probability going to $1$. 

Now, we show that on the event $\Omega$, with large probability, $\bQ(z^*) \in [c_{min} \rho_n /2, 2c_{max} \rho_n]^{k\times k}$. Recall that for any $a,b$, conditionally on $z^*$, $n_{ab}(z^*)\bQ(z^*)_{ab}$ is a sum of $n_{ab}(z^*)$ independent Bernoulli random variables with mean $\rho_n \bQ^0_{ab}$. Then, Bernstein's inequality \ref{thm:Bernstein} implies that for any $t>0$
$$ \mathbb{P}\left(\left \vert n_{ab}(z^*)\bQ(z^*)_{ab}  - n_{ab}(z^*)\rho_n\bQ^0_{ab}\right \vert \geq \sqrt{2tn_{ab}(z^*)\rho_n\bQ^0_{ab}} + \frac{2t}{3}\right) \leq 2e^{-t}.$$
Choosing $t = n_{ab}(z^*)\rho_n\bQ^0_{ab}/16$ yields
$$\mathbb{P}\left(\left \vert n_{ab}(z^*)\bQ(z^*)_{ab}  - n_{ab}(z^*)\rho_n\bQ^0_{ab}\right \vert \geq 0.5n_{ab}(z^*)\rho_n\bQ^0_{ab}\right) \leq 2e^{-n_{ab}(z^*)\rho_n\bQ^0_{ab}/16}.$$
On the event $\Omega$, this implies that 
$$\mathbb{P}\left(\left \vert n_{ab}(z^*)\bQ(z^*)_{ab}  - n_{ab}(z^*)\rho_n\bQ^0_{ab}\right \vert \geq 0.5n_{ab}(z^*)\rho_n\bQ^0_{ab}\right) \leq 2e^{-n^2\rho_n\bQ^0_{ab}(\min_a\alpha_a^0)^2/80}.$$
A union bound yields
$$\mathbb{P}\left(\bQ(z^*) \notin [c_{min} \rho_n /2, 2c_{max} \rho_n]^{k\times k} \right) \leq 2k^2e^{-n^2\rho_n\min_{a,b}\bQ^0_{ab}(\min_a\alpha_a^0)^2/80}$$
on the event $\Omega$. Since $\mathbb{P}\left(\Omega\right) \rightarrow 1$ and $n^2\rho_n \rightarrow +\infty$, this shows that $$\mathbb{P}\left(\bQ(z^*) \in [c_{min} \rho_n /2, 2c_{max} \rho_n]^{k\times k} \right) \rightarrow 1.$$
Now, Equation \eqref{eq:concentrationzML} shows that with probability going to $1$, $\widehat{z} \sim z^*$. Thus, $Q(\widehat{z}) \in [c_{min} \rho_n /2, 2c_{max} \rho_n]^{k\times k}$ with probability going to one, and the maximum likelihood estimator of the probabilities of connections between nodes coincides with the restricted maximum likelihood estimator. This concludes the proof of Proposition \ref{prp:boundvariational}.


\bibliographystyle{apalike}
\bibliography{ref_MLE}

\begin{thebibliography}{}

\bibitem[{Abbe} and {Sandon}, 2015]{Abbe2015CommunityDI}
{Abbe}, E. and {Sandon}, C. (2015).
\newblock Community detection in general stochastic block models: Fundamental
  limits and efficient algorithms for recovery.
\newblock In {\em 2015 IEEE 56th Annual Symposium on Foundations of Computer
  Science}, pages 670--688.

\bibitem[Agarwal et~al., 2017]{SPDSBM}
Agarwal, N., Bandeira, A.~S., Koiliaris, K., and Kolla, A. (2017).
\newblock {\em Multisection in the Stochastic Block Model Using Semidefinite
  Programming}, pages 125--162.
\newblock Springer International Publishing, Cham.

\bibitem[Albert and Barab\'asi, 2002]{RevModPhys.74.47}
Albert, R. and Barab\'asi, A.-L. (2002).
\newblock Statistical mechanics of complex networks.
\newblock {\em Reviews of Modern Physics}, 74:47--97.

\bibitem[Amini et~al., 2013]{Bickl}
Amini, A.~A., Chen, A., Bickel, P.~J., and Levina, E. (2013).
\newblock {Pseudo-likelihood methods for community detection in large sparse
  networks}.
\newblock {\em The Annals of Statistics}, 41(4):2097 -- 2122.

\bibitem[Amini and Levina, 2018]{amini2018}
Amini, A.~A. and Levina, E. (2018).
\newblock {On semidefinite relaxations for the block model}.
\newblock {\em The Annals of Statistics}, 46(1):149 -- 179.

\bibitem[Bandeira, 2015]{Bandeira2018}
Bandeira, A. (2015).
\newblock Random laplacian matrices and convex relaxations.
\newblock {\em Foundations of Computational Mathematics}, 18.

\bibitem[{Benyahia} et~al., 2017]{Benyahia2017CommunityDI}
{Benyahia}, O., {Largeron}, C., and {Jeudy}, B. (2017).
\newblock Community detection in dynamic graphs with missing edges.
\newblock In {\em 2017 11th International Conference on Research Challenges in
  Information Science (RCIS)}, pages 372--381.

\bibitem[Bickel et~al., 2013]{LikelihoodBickel}
Bickel, P., Choi, D., Chang, X., and Zhang, H. (2013).
\newblock {Asymptotic normality of maximum likelihood and its variational
  approximation for stochastic blockmodels}.
\newblock {\em The Annals of Statistics}, 41(4):1922 -- 1943.

\bibitem[Bickel and Chen, 2009]{Bickel21068}
Bickel, P.~J. and Chen, A. (2009).
\newblock A nonparametric view of network models and newman{\textendash}girvan
  and other modularities.
\newblock {\em Proceedings of the National Academy of Sciences},
  106(50):21068--21073.

\bibitem[Bleakley et~al., 2007]{BleakleyMissingProteins}
Bleakley, K., Biau, G., and Vert, J.-P. (2007).
\newblock {Supervised reconstruction of biological networks with local models}.
\newblock {\em Bioinformatics}, 23(13):i57--i65.

\bibitem[Bordenave et~al., 2018]{bordenave2018}
Bordenave, C., Lelarge, M., and Massoulié, L. (2018).
\newblock {Nonbacktracking spectrum of random graphs: Community detection and
  nonregular Ramanujan graphs}.
\newblock {\em The Annals of Probability}, 46(1):1 -- 71.

\bibitem[Cand\`{e}s and Recht, 2012]{Candes2009}
Cand\`{e}s, E. and Recht, B. (2012).
\newblock Exact matrix completion via convex optimization.
\newblock {\em Foundations of Computational Mathematics}, 55(6):111–119.

\bibitem[Celisse et~al., 2012]{celisse2012}
Celisse, A., Daudin, J.-J., and Pierre, L. (2012).
\newblock {Consistency of maximum-likelihood and variational estimators in the
  stochastic block model}.
\newblock {\em Electronic Journal of Statistics}, 6(none):1847 -- 1899.

\bibitem[Chatterjee, 2015]{chatterjee2015}
Chatterjee, S. (2015).
\newblock {Matrix estimation by Universal Singular Value Thresholding}.
\newblock {\em The Annals of Statistics}, 43(1):177 -- 214.

\bibitem[Chen and Lei, 2014]{CrossValK}
Chen, K. and Lei, J. (2014).
\newblock Network cross-validation for determining the number of communities in
  network data.
\newblock {\em Journal of the American Statistical Association}, 113:241 --
  251.

\bibitem[Clauset et~al., 2008]{Clauset2008HierarchicalSA}
Clauset, A., Moore, C., and Newman, M. (2008).
\newblock Hierarchical structure and the prediction of missing links in
  networks.
\newblock {\em Nature}, 453:98--101.

\bibitem[Daudin et~al., 2008]{VarEst}
Daudin, J.-J., Picard, F., and Robin, S. (2008).
\newblock A mixture model for random graph.
\newblock {\em Statistics and Computing}, 18:173--183.

\bibitem[Davenport et~al., 2014]{Davenport1Bit}
Davenport, M.~A., Plan, Y., van~den Berg, E., and Wootters, M. (2014).
\newblock {1-Bit matrix completion}.
\newblock {\em Information and Inference: A Journal of the IMA}, 3(3):189--223.

\bibitem[Decelle et~al., 2011]{Lenka}
Decelle, A., Krzakala, F., Moore, C., and Zdeborov\'a, L. (2011).
\newblock Asymptotic analysis of the stochastic block model for modular
  networks and its algorithmic applications.
\newblock {\em Physical review. E}, 84:066106.

\bibitem[Gao et~al., 2016]{2015gaoBiclustering}
Gao, C., Lu, Y., Ma, Z., and Zhou, H.~H. (2016).
\newblock Optimal estimation and completion of matrices with biclustering
  structures.
\newblock {\em Journal of Machine Learning Research}, 17(1):5602–5630.

\bibitem[Gao et~al., 2015]{gao2015optimal}
Gao, C., Lu, Y., and Zhou, H.~H. (2015).
\newblock {Rate-optimal graphon estimation}.
\newblock {\em The Annals of Statistics}, 43(6):2624 -- 2652.

\bibitem[Giné and Nickl, 2015]{gine_nickl_2015}
Giné, E. and Nickl, R. (2015).
\newblock {\em Mathematical Foundations of Infinite-Dimensional Statistical
  Models}.
\newblock Cambridge Series in Statistical and Probabilistic Mathematics.
  Cambridge University Press.

\bibitem[Giraud and Verzelen, 2018]{Giraud2018PartialRB}
Giraud, C. and Verzelen, N. (2018).
\newblock {Partial recovery bounds for clustering with the relaxed $K$-means}.
\newblock {\em {Mathematical Statistics and Learning}}, 1(3):317--374.

\bibitem[Guimer{\`a} and Sales-Pardo, 2009]{GuimerMissing}
Guimer{\`a}, R. and Sales-Pardo, M. (2009).
\newblock Missing and spurious interactions and the reconstruction of complex
  networks.
\newblock {\em Proceedings of the National Academy of Sciences},
  106(52):22073--22078.

\bibitem[{Hagen} and {Kahng}, 1992]{HagenSpecClus}
{Hagen}, L. and {Kahng}, A.~B. (1992).
\newblock New spectral methods for ratio cut partitioning and clustering.
\newblock {\em IEEE Transactions on Computer-Aided Design of Integrated
  Circuits and Systems}, 11(9):1074--1085.

\bibitem[{Hajek} et~al., 2016]{HajekSPD}
{Hajek}, B., {Wu}, Y., and {Xu}, J. (2016).
\newblock Achieving exact cluster recovery threshold via semidefinite
  programming: Extensions.
\newblock {\em IEEE Transactions on Information Theory}, 62(10):5918--5937.

\bibitem[Handcock and Gile, 2010]{handcock2010}
Handcock, M.~S. and Gile, K.~J. (2010).
\newblock Modeling social networks from sampled data.
\newblock {\em The Annals of Applied Statistics}, 4(1).

\bibitem[Klopp, 2014]{KloppLowRank}
Klopp, O. (2014).
\newblock {Noisy low-rank matrix completion with general sampling
  distribution}.
\newblock {\em Bernoulli}, 20(1):282 -- 303.

\bibitem[Klopp et~al., 2015]{KloppMultinomial}
Klopp, O., Lafond, J., Moulines, E., and Salmon, J. (2015).
\newblock {Adaptive multinomial matrix completion}.
\newblock {\em Electronic Journal of Statistics}, 9(2):2950 -- 2975.

\bibitem[Klopp et~al., 2017]{KloppGraphon}
Klopp, O., Tsybakov, A.~B., and Verzelen, N. (2017).
\newblock {Oracle inequalities for network models and sparse graphon
  estimation}.
\newblock {\em The Annals of Statistics}, 45(1):316 -- 354.

\bibitem[Klopp and Verzelen, 2017]{KloppCut}
Klopp, O. and Verzelen, N. (2017).
\newblock {Optimal graphon estimation in cut distance}.
\newblock Working Papers 2017-42, Center for Research in Economics and
  Statistics.

\bibitem[Koltchinskii et~al., 2011]{koltchinskii2011}
Koltchinskii, V., Lounici, K., and Tsybakov, A.~B. (2011).
\newblock {Nuclear-norm penalization and optimal rates for noisy low-rank
  matrix completion}.
\newblock {\em The Annals of Statistics}, 39(5):2302 -- 2329.

\bibitem[Kossinets, 2006]{KOSSINETS2006247}
Kossinets, G. (2006).
\newblock Effects of missing data in social networks.
\newblock {\em Social Networks}, 28(3):247--268.

\bibitem[Kshirsagar et~al., 2012]{Kshirsagar2012TechniquesTC}
Kshirsagar, M., Carbonell, J., and Klein-Seetharaman, J. (2012).
\newblock Techniques to cope with missing data in host–pathogen protein
  interaction prediction.
\newblock {\em Bioinformatics}, 28(18):i466–i472.

\bibitem[Latouche et~al., 2011]{LatoucheBlogsphere}
Latouche, P., Birmelé, E., and Ambroise, C. (2011).
\newblock Overlapping stochastic block models with application to the french
  political blogosphere.
\newblock {\em The Annals of Applied Statistics}, 5(1):309--336.

\bibitem[Leger et~al., 2014]{Leger2014}
Leger, J.-B., Vacher, C., and Daudin, J.-J. (2014).
\newblock Detection of structurally homogeneous subsets in graphs.
\newblock {\em Statistics and Computing}, 24(5):675–692.

\bibitem[Lei, 2016]{lei2016}
Lei, J. (2016).
\newblock {A goodness-of-fit test for stochastic block models}.
\newblock {\em The Annals of Statistics}, 44(1):401 -- 424.

\bibitem[Lov{\'a}sz, 2012]{LovaszBook}
Lov{\'a}sz, L. (2012).
\newblock {\em Large Networks and Graph Limits}.
\newblock American Mathematical Society colloquium publications. American
  Mathematical Society.

\bibitem[Lü and Zhou, 2011]{Lu2010LinkPI}
Lü, L. and Zhou, T. (2011).
\newblock Link prediction in complex networks: A survey.
\newblock {\em Physica A: Statistical Mechanics and its Applications},
  390(6):1150--1170.

\bibitem[Mariadassou and Tabouy, 2020]{TabThe}
Mariadassou, M. and Tabouy, T. (2020).
\newblock {Consistency and asymptotic normality of stochastic block models
  estimators from sampled data}.
\newblock {\em Electronic Journal of Statistics}, 14(2):3672 -- 3704.

\bibitem[Massouli\'{e}, 2014]{mass2013}
Massouli\'{e}, L. (2014).
\newblock Community detection thresholds and the weak ramanujan property.
\newblock In {\em Proceedings of the Forty-Sixth Annual ACM Symposium on Theory
  of Computing}, STOC '14, page 694–703, New York, NY, USA. Association for
  Computing Machinery.

\bibitem[{Matias, Catherine} and {Robin, St\'ephane}, 2014]{refId0}
{Matias, Catherine} and {Robin, St\'ephane} (2014).
\newblock Modeling heterogeneity in random graphs through latent space models:
  a selective review*.
\newblock {\em ESAIM: Proc.}, 47:55--74.

\bibitem[{McSherry}, 2001]{SpecClusMcSherry}
{McSherry}, F. (2001).
\newblock Spectral partitioning of random graphs.
\newblock In {\em Proceedings 42nd IEEE Symposium on Foundations of Computer
  Science}, pages 529--537.

\bibitem[Mossel et~al., 2016]{mossel2014consistency}
Mossel, E., Neeman, J., and Sly, A. (2016).
\newblock {Consistency thresholds for the planted bisection model}.
\newblock {\em Electronic Journal of Probability}, 21(none):1 -- 24.

\bibitem[Negahban and Wainwright, 2011]{negahban2011}
Negahban, S. and Wainwright, M.~J. (2011).
\newblock {Estimation of (near) low-rank matrices with noise and
  high-dimensional scaling}.
\newblock {\em The Annals of Statistics}, 39(2):1069 -- 1097.

\bibitem[Newman, 2006]{Newman8577}
Newman, M. E.~J. (2006).
\newblock Modularity and community structure in networks.
\newblock {\em Proceedings of the National Academy of Sciences},
  103(23):8577--8582.

\bibitem[Olhede and Wolfe, 2014]{Olhede14722}
Olhede, S.~C. and Wolfe, P.~J. (2014).
\newblock Network histograms and universality of blockmodel approximation.
\newblock {\em Proceedings of the National Academy of Sciences},
  111(41):14722--14727.

\bibitem[Picard et~al., 2009]{picard:hal-00391483}
Picard, F., Miele, V., Daudin, J., Cottret, L., and Robin, S. (2009).
\newblock {Deciphering the connectivity structure of biological networks using
  MixNet}.
\newblock {\em {BMC Bioinformatics}}, 10:1--11.

\bibitem[Rohe et~al., 2011]{rohe2011}
Rohe, K., Chatterjee, S., and Yu, B. (2011).
\newblock {Spectral clustering and the high-dimensional stochastic blockmodel}.
\newblock {\em The Annals of Statistics}, 39(4):1878 -- 1915.

\bibitem[Tabouy et~al., 2020]{TabPra}
Tabouy, T., Barbillon, P., and Chiquet, J. (2020).
\newblock Variational inference for stochastic block models from sampled data.
\newblock {\em Journal of the American Statistical Association},
  115(529):455--466.

\bibitem[Vershynin, 2012]{vershynin}
Vershynin, R. (2012).
\newblock {\em Introduction to the non-asymptotic analysis of random matrices},
  page 210–268.
\newblock Cambridge University Press.

\bibitem[Wang and Bickel, 2017]{BickelMS}
Wang, Y. X.~R. and Bickel, P.~J. (2017).
\newblock {Likelihood-based model selection for stochastic block models}.
\newblock {\em The Annals of Statistics}, 45(2):500 -- 528.

\bibitem[Wasserman and Faust, 1994]{WassermanSocio}
Wasserman, S. and Faust, K. (1994).
\newblock {\em Social Network Analysis: Methods and Applications}.
\newblock Structural Analysis in the Social Sciences. Cambridge University
  Press.

\bibitem[Xu, 2018]{USVTXu}
Xu, J. (2018).
\newblock Rates of convergence of spectral methods for graphon estimation.
\newblock In Dy, J. and Krause, A., editors, {\em Proceedings of the 35th
  International Conference on Machine Learning}, volume~80 of {\em Proceedings
  of Machine Learning Research}, pages 5433--5442. PMLR.

\bibitem[Yamanishi et~al., 2004]{yamanishi:hal-00433586}
Yamanishi, Y., Vert, J.-P., and Kanehisa, M. (2004).
\newblock {Protein network inference from multiple genomic data: a supervised
  approach.}
\newblock {\em {Bioinformatics}}, 20 Suppl 1:i363--70.

\bibitem[Yan and Gregory, 2012]{Yan2012FindingME}
Yan, B. and Gregory, S. (2012).
\newblock Finding missing edges in networks based on their community structure.
\newblock {\em Physical review. E}, 85:056112.

\bibitem[Zhang et~al., 2017]{Zhang2017EstimatingNE}
Zhang, Y., Levina, E., and Zhu, J. (2017).
\newblock {Estimating network edge probabilities by neighbourhood smoothing}.
\newblock {\em Biometrika}, 104(4):771--783.

\bibitem[Zhao et~al., 2017]{LevinaLinkPred}
Zhao, Y., Wu, Y.-J., Levina, E., and Zhu, J. (2017).
\newblock Link prediction for partially observed networks.
\newblock {\em Journal of Computational and Graphical Statistics},
  26(3):725--733.

\end{thebibliography}

\end{document}